\documentclass[11pt]{article}
\textwidth 8in
\textheight 9in
\oddsidemargin -.3in
\topmargin -1.5cm
\linespread{1.3}

\usepackage{fullpage,amssymb}
\usepackage{caption}
\usepackage{subcaption}
\usepackage{graphicx}
\usepackage{amsmath}
\usepackage{float}
\usepackage{listings}
\usepackage{xcolor}
\usepackage{amsthm}
\usepackage{algorithmic}
\usepackage{algorithm}
\usepackage[colorlinks,
linkcolor=blue,
anchorcolor=blue,
citecolor=blue]{hyperref}
\usepackage{natbib}
\usepackage{comment}
\usepackage{bbm}
\usepackage{multirow}
\usepackage{array}
\usepackage[shortlabels]{enumitem}
\usepackage{romannum}
\AtBeginDocument{\pagenumbering{arabic}}
\usepackage{multirow}
\usepackage{stmaryrd}

\setcitestyle{authoryear,round}

\newtheorem{theorem}{Theorem}
\newtheorem{lemma}{Lemma}

\newtheorem{proposition}{Proposition}
\newtheorem{corollary}{Corollary}
\newtheorem{assumption}{Assumption}
\newtheorem{remark}{Remark}


\newcommand\ltwo[1]{\| #1 \|_{2}}

\newcommand{\inp}[2]{\langle #1,#2\rangle}

\def\calD{{\mathcal D}}
\def\calE{{\mathcal E}}

\def\calP{{\mathcal P}}

\def\calS{{\mathcal S}}

\def\bSigma{{\boldsymbol{\Sigma}}}

\def\bcalE{{\boldsymbol{\mathcal E}}}

\def\EE{{\mathbb E}}

\def\II{{\mathbb I}}

\def\PP{{\mathbb P}}

\def\RR{{\mathbb R}}

\def\TT{{\mathbb T}}

\def\ZZ{{\mathbb Z}}

\def\e{{\mathbf e}}

\def\g{{\mathbf g}}

\def\G{{\mathbf G}}

\def\I{{\mathbf I}}

\def\M{{\mathbf M}}

\def\X{{\mathbf X}}

\def\Z{{\mathbf Z}}

\def\bSigma{{\boldsymbol{\Sigma}}}

\def\Bbeta{{\boldsymbol{\beta}}}

\def\ku{C_{{\tiny u}}}
\def\kl{C_{{\tiny l}}}

\def\ca{C_{{\tiny a}}}
\def\cb{C_{{\tiny b}}}

\allowdisplaybreaks

\begin{document}
	\title{Online Quantile Regression}
	\author{Yinan Shen$^1$, Dong Xia$^2$, Wen-Xin Zhou$^3$\\
		~\\
		$^1$Department of Mathematics, University of Southern California\\
		$^2$Department of Mathematics, Hong Kong University of Science and Technology\\
		$^3$Department of Information and Decision Sciences, University of Illinois at Chicago}
	
	\date{}

	\maketitle
	\begin{abstract}
		This paper addresses the challenge of integrating sequentially arriving data into the quantile regression framework, where the number of features may increase with the number of observations, the time horizon is unknown, and memory resources are limited. Unlike least squares and robust regression methods, quantile regression models different segments of the conditional distribution, thereby capturing heterogeneous relationships between predictors and responses and providing a more comprehensive view of the underlying stochastic structure. We employ stochastic sub-gradient descent to minimize the empirical check loss and analyze its statistical properties and regret behavior. Our analysis reveals a subtle interplay between updating iterates based on individual observations and on batches of observations, highlighting distinct regularity characteristics in each setting. The proposed method guarantees long-term optimal estimation performance regardless of the chosen update strategy. Our contributions extend existing literature by establishing exponential-type concentration inequalities and by achieving optimal regret and error rates that exhibit only \emph{short-term} sensitivity to initialization. A key insight from our study lies in the refined statistical analysis showing that properly chosen stepsize schemes substantially mitigate the influence of initial errors on subsequent estimation and regret. This result underscores the robustness of stochastic sub-gradient descent in managing initial uncertainties and affirms its effectiveness in sequential learning settings with unknown horizons and data-dependent sample sizes. Furthermore, when the initial estimation error is well-controlled, our analysis reveals a trade-off between short-term error reduction and long-term optimality. For completeness, we also discuss the squared loss case and outline appropriate update schemes, whose analysis requires additional care. Extensive simulation studies corroborate our theoretical findings.
	\end{abstract}

	\section{Introduction}
	\label{sec:1}
	
	Online learning aims to efficiently incorporate sequentially arriving data and make timely predictions. In contrast to offline learning, where all data are collected and stored prior to analysis, online learning processes data in a sequential manner without requiring access to the entire sample at once, thereby alleviating both storage and computational burdens. As a result, online methods are particularly well-suited for large-scale datasets \citep{lecun1989backpropagation, rajalakshmi2019pattern, finn2019online}, streaming asset pricing data \citep{soleymani2020financial}, and the increasingly prominent domain of reinforcement learning \citep{gao2019batched, han2022online, ren2023dynamic}. Broader treatments and further applications of online learning can be found in \citet{bottou1999line}, \citet{cesa2006prediction}, \citet{hoffman2010online}, \citet{hazan2016introduction}, and \citet{orabona2019modern}. 
	
	In classical offline linear regression, estimation and inference are based on a pre-collected sample of independent observations $\{(\mathbf{X}_i,Y_i)\}_{i=1}^n$ satisfying $Y_{i}=\mathbf{X}_i^{\top}\boldsymbol{\beta}^* +\xi_{i}$, where $\boldsymbol{\beta}^* \in\mathbb{R}^{d}$ is the unknown parameter of interest, and $\xi_i$ denotes the random noise variable, satisfying $\mathbb{E}(\xi_{i}|\mathbf{X}_{i}) = 0$. It is well-known that the ordinary least squares estimator $\hat{\boldsymbol{\beta}}$ achieves the minimax rate of convergence, i.e., $\mathbb{E} \| \hat{\boldsymbol{\beta}} - \boldsymbol{\beta}^* \|_{2}^{2}=O(\sigma^2 d /n)$, where $\sigma^2$ is such that $\mathbb{E}(\xi_{i}^{2}|\mathbf{X}_i) \leq \sigma^2$ almost surely. In contrast, the entire sample is inaccessible in the online setting, and the total number of observations may even be unknown. Specifically, at time $t$, only $n_t$ pairs of observations can be used: 
	\begin{align}
		Y_i^{(t)}=\X_i^{(t)\top}\Bbeta^*+\xi_i^{(t)},\quad i=1,\dots,n_t. \label{eq:model}
	\end{align}
	While the data are received sequentially, online learning refrains from repeatedly using the data for model refitting. Instead, it sequentially updates the estimator. In contrast, offline methods leverage the entire sample, and based on the available data, the offline method \citep{he2000parameters,koenker2005quantile} leads to a statistically optimal estimator with mean squared error of order $O(\sigma^2 \cdot d/\sum_{t=0}^{T}n_t)$, where $T$ is called the \emph{horizon}. 
	This prompts the question of whether online methods can achieve error rates comparable to their offline counterparts. Must we compromise accuracy for more efficient computation and reduced storage in online learning? Furthermore, with the accumulation of more information over time, the theoretical offline error rates decline. Is it feasible to sustain this downward trend in optimal error rates over time? Particularly when the horizon (maximum time $T$) is unknown, isolated error rates at specific times lack persuasiveness.

	In online learning, a learner must generate timely predictions as data arrive sequentially, either individually or in batches. The concept of \textit{regret}, first introduced by \cite{savage1951theory} and later popularized in the online learning literature \citep[e.g.,][]{goldenshluger2013linear, han2020sequential, ren2023dynamic}, quantifies cumulative prediction error over time and serves as a key measure of learning performance. The regret at horizon $T$ is defined as
	\begin{align}
		\textsf{Regret}_T:= \EE\Bigg\{\sum_{t=0}^{T} f_{t}(\Bbeta_{t})-f_t(\Bbeta^*) \Bigg\}, 
		\label{eq:regret}
	\end{align}
	where $f_t(\cdot)$ and $\Bbeta_{t}$ denote the loss function and estimator at time $t$, respectively. The expectation
	is taken over the data from $t=0$ to $T$. Regret serves as a metric to evaluate the efficacy of a sequential estimation scheme. Optimal regret occurs when error rates exhibit a scaling behavior of $1/t$ as time $t$ becomes sufficiently large \citep{hazan2016introduction}. In contrast, \cite{cai2023online} assumes a known horizon and focuses more on the ultimate accuracy of estimation. However, this setting bears resemblance to the sample splitting technique \citep{xia2021statistical, zou2020consistent}.

	Online estimation under the squared loss in equation~\eqref{eq:regret} has been extensively studied over the past two decades; see, for example, \cite{bottou1999line}, \cite{zinkevich2003online}, \cite{hazan2007adaptive}, and \cite{langford2009sparse}, among others.
	Stochastic gradient descent stands out as a natural and elegant methodology for processing sequentially arriving data, integrating streaming information, and minimizing the regret in \eqref{eq:regret}. \citet{bottou1999line} lays the foundation for the general theoretical framework of online learning, establishing the asymptotic convergence of gradient descent toward stationary points. Under suitable conditions, the seminal work of \citet{zinkevich2003online} proves the iterative convergence of gradient descent with a stepsize of order $O(t^{-1/2})$. Subsequently, \citet{hazan2007logarithmic} demonstrates that using a stepsize of order $O(t^{-1})$ yields an optimal regret bound of order $O(\log T)$. The pioneering work of \citet{duchi2009efficient} further examined the performance of iterative sub-gradient descent applied to non-differentiable loss functions, offering insights into convergence dynamics and establishing regret bounds under various stepsize schemes. We refer to \cite{hazan2007adaptive}, \cite{langford2009sparse}, \cite{streeter2010less}, \cite{mcmahan2010adaptive}, \cite{duchi2011adaptive}, \cite{bach2013non}, and the references therein \cite{orabona2019modern} for additional gradient descent-based algorithms in online learning. In the offline learning context, stochastic (sub-)gradient descent serves as an effective approach to mitigate computational burdens \citep{zhang2004solving}. This method updates the current iterate using only a single, randomly selected observation pair, rather than the entire sample. \citet{rakhlin2011making} introduces a stepsize scheme of order $O(1/t)$, yielding a sub-linear convergence rate comparable to that of its online counterpart under a strongly convex objective. In offline stochastic optimization, the total sample size is typically known, allowing such information to be incorporated into the stepsize design and ensuring that the statistical optimality remains stable over time. Examples of constant stepsize schemes that explicitly account for the learning horizon can be found in \citet{duchi2009efficient}, \citet{cai2023online}, and \citet{puchkin2023breaking}. Other studies, such as \citet{delyon1993accelerated}, \citet{roux2012stochastic}, and \citet{johnson2013accelerating}, focus on accelerating the sub-linear convergence rate of stochastic descent methods. While both offline stochastic gradient descent and online learning alleviate computational pressures, they differ fundamentally in nature. In online learning, the time horizon and the total number of observations are unknown, and may even be infinite, necessitating procedures that achieve dynamic and statistically optimal error rates over time. Moreover, an online learner often must generate predictions for newly arriving covariate-only inputs, with predictive accuracy evaluated through the notion of regret.
	
	The statistical analysis of the squared loss remains a delicate yet crucial area of study. Although online learning and (stochastic) gradient descent methods have been extensively investigated from an optimization perspective for both squared and non-differentiable loss functions, most of the existing literature assumes that the empirical loss possesses certain regularity properties, such as strong convexity and/or smoothness. Moreover, the feasible domain is often restricted to a bounded convex set \citep{hazan2007logarithmic, duchi2009efficient, langford2009sparse, rakhlin2011making}. Comprehensive overviews of online learning algorithms from an optimization standpoint can be found in \citet{hazan2016introduction} and \citet{orabona2019modern}. However, empirical strong convexity may fail to hold even under the squared loss when the available storage size is limited. Addressing this issue calls for refined statistical analysis that explicitly accounts for the interplay between data dependence, sample size, and memory constraints; see, for example, \citet{chen2019quantile}, \citet{han2023online}, and the references therein. \citet{fan2018statistical} further demonstrates that an expected error rate of order $O(\log(T)/T)$ is attainable for sparse online regression under a stepsize scheme of order $O(1/t)$, yielding an optimal $O(\log T)$ regret.

	The preceding statistical online frameworks exhibit notable limitations when applied to quantile regression, often yielding suboptimal error rates and weakened guarantees on success probability, particularly in the presence of heavy-tailed noise. Existing studies on renewable or online quantile regression primarily analyze asymptotic performance within batch learning frameworks, where the batch size diverges over time. A common feature of these methods is their reliance on storing historical summary statistics, which incurs a memory cost of order $O(d^2)$ \citep{jiang2022renewable, sun2023online, chen2024renewable}. In contrast, the present work complements prior studies by developing online quantile regression methods that eliminate the need to store either historical data or summary statistics. For comparison, Section~\ref{sec:simulation} provides a parallel discussion of online least squares methods and their corresponding stepsize schemes. To estimate conditional quantile models sequentially, we adopt a simple yet effective sub-gradient descent approach. Through refined statistical analysis, we design a stepsize sequence that either exhibits geometric decay, remains constant, or decays at the rate of $1/t$, depending on the proximity of the current iterate to the oracle solution. This design departs from the scheme proposed by \citet{duchi2009efficient}; a detailed comparison between the two approaches is provided in Section~\ref{sec:simulation}, highlighting the advantages of the proposed statistically informed stepsize rule. Under heavy-tailed noise, we further establish exponential-type tail bounds for the proposed online estimators, improving upon the polynomial-type guarantees in \citet{han2022online}. This enhancement in tail behavior underscores the robustness and reliability of our estimators when applied to heavy-tailed data environments.

	In this work, we refer to online learning as a setting in which data arrive sequentially over time. The complete dataset and the total number of observations are never simultaneously available, regardless of how many observations can be processed at each step. Under the linear model \eqref{eq:model}, we consider three representative scenarios: (i) the newly arriving dataset $\mathcal{D}_t := \{ (\mathbf{X}_i^{(t)}, Y_i^{(t)}) \}_{i=1}^{n_t}$ contains only a single observation pair, requiring $O(d)$ storage; (ii) $\mathcal{D}_t$ contains at least $O(d)$ samples; and (iii) the server can store an unbounded number of samples. The first setting has been extensively examined in the literature \citep{duchi2009efficient, hazan2007logarithmic, hazan2016introduction, cai2023online, langford2009sparse, bach2013non, mhammedi2019lipschitz, han2022online}, yet it continues to demand careful statistical treatment, as discussed earlier. \citet{do2009proximal} further studies the case where data arrive in batches at each iteration. From an optimization standpoint, however, there is little distinction between single-sample and batch arrivals, since regularity conditions such as smoothness or strong convexity are typically assumed. From a statistical perspective, by contrast, we will demonstrate that these two settings exhibit markedly different behaviors. Specifically, this paper seeks to address the following fundamental questions: Is it possible to achieve a continuously decreasing error rate as new data arrive under limited storage? Can statistical optimality be maintained along the learning trajectory? How much information is lost relative to the offline benchmark? In online learning, how do failure probabilities accumulate across iterations, and how do they determine the maximal horizon for guaranteed convergence? What are the implications for per-iteration success probability, especially under heavy-tailed noise and single-sample updates? Finally, what role does initialization play? Does the initial error necessarily have a long-term influence on subsequent errors and regret, or can arbitrary initialization be tolerated?
	
	We summarize our main contributions addressing these questions as follows.

	\begin{enumerate}[1.]
		\item This paper develops a statistical analysis framework for online quantile regression, addressing the challenges arising from the empirical loss function's lack of strong convexity and smoothness. To achieve long-term statistical optimality under heavy-tailed noise, we design a stepsize scheme informed by the statistical regularity properties of the quantile loss. Unlike prior studies, the derived error rates explicitly scale with the noise level, capturing the intrinsic stochastic complexity of the problem. Furthermore, the proposed online approach deliberately trades a marginal loss in statistical efficiency for substantial gains in computational scalability and memory efficiency relative to offline methods. As demonstrated in Section~\ref{sec:simulation}, this slight reduction in accuracy becomes asymptotically negligible as the time horizon grows. 
		
		\item Our algorithm attains statistically optimal rates even under heavy-tailed noise and admits a high-probability guarantee, with the failure probability decaying exponentially fast in the dimension. This property ensures the validity of our theoretical results even when the unknown horizon grows exponentially with dimension. For instance, when only a single datum arrives at each iteration, the failure probability associated with our established error rate is $O(\exp(-c_0 d))$, where $c_0 > 0$ is a universal constant. Consequently, the horizon can be as large as $T = O(\exp(c_0 d))$ without compromising the theoretical guarantees. In sharp contrast, the results in \citet{han2022online} and \citet{cai2023online} hold with probability $1 - d^{-O(1)}$ under sub-Gaussian noise, implying that their admissible horizon is limited to $T = O(d^{O(1)})$.
		
		\item Our analysis demonstrates that the influence of the initial error on both estimation accuracy and regret is merely transient. Consequently, the derived statistical error bounds remain valid for arbitrary initializations, without requiring the initial estimator to lie within a compact region. This robustness is achieved through refined statistical analyses and the design of a novel stepsize scheme. Stochastic (sub-)gradient descent methods for both quantile and squared losses benefit substantially from stepsize schedules informed by underlying statistical properties. For instance, in the setting where $\xi_i^{(t)} \sim N(0, \sigma^2)$ and $n_t=1$ in \eqref{eq:model}, we establish the regret bound $\sum_{t=0}^T\EE\|\Bbeta_t-\Bbeta^*\|^2\leq O(\|\Bbeta_{0}-\Bbeta^*\|_2^2 + \sigma^2\log T)$.
	\end{enumerate}

	The remainder of this paper is organized as follows. Section~\ref{sec:linear_regression} introduces the quantile (check) loss function and the online sub-gradient descent algorithm. It further examines the convergence dynamics and regret bounds, with detailed discussions of the three distinct settings considered. Section~\ref{sec:simulation} presents extensive numerical experiments, including evaluations of the proposed stepsize scheme, comparisons of accuracy with offline regression methods, and analyses of convergence behavior. A brief discussion of online least squares from a statistical perspective is also provided. The proof of Theorem~\ref{thm:one_sample} appears in Section~\ref{sec:proof_online}, while the proofs of the remaining theorems are deferred to the supplementary material.

	\section{Online quantile regression via adaptive sub-gradient descent}
	\label{sec:linear_regression}
	
	Throughout this section, we consider the conditional quantile model \eqref{eq:model}, that is, the conditional $\tau$-th quantile of $\xi_i^{(t)}$ given $\X_i^{(t)}$ is zero for some predetermined $\tau \in (0, 1)$. Quantile regression (QR) plays a crucial role in unraveling pathways of dependence between the outcome and a collection of features, which remain elusive through conditional mean regression analysis, such as the least squares estimation. Since its introduction by \cite{koenker1978regression}, QR as undergone extensive study, encompassing theoretical understanding, methodological development for various models and data types, and practical applications in a wide range of fields. The main focus of existing literature centers around the formulation of methodologies and theories for QR utilizing static data, where a complete dataset is available. This is referred to as the offline setting in online learning literature. The most fundamental and well-studied method for estimating $\Bbeta^*$ involves empirical risk minimization, or statistical $M$-estimation. The associated loss function $\rho_{Q,\tau}(x):=\tau x\mathbb{I}(x\geq 0)+(\tau-1)x\mathbb{I}(x<0)$ is known as the check loss or quantile loss. We refer to \cite{koenker2005quantile} and \cite{koenker2017handbook} for a comprehensive exploration of offline quantile regression, covering statistical theory, computational methods, as well as diverse extensions and applications.
	
	In contrast to the least squares method, the non-smooth nature of the quantile loss introduces additional challenges to QR, particularly in the era of big data. Established algorithms employed for this purpose include the simplex algorithm \citep{koenker1987algorithm}, interior point methods with preprocessing \citep{portnoy1997gaussian}, alternating direction method of multipliers, and proximal methods \citep{parikh2014proximal}. More recently, \cite{FGH2021} and \cite{he2021smoothed} have shown that convolution-smoothed quantile regression attains desired asymptotic and non-asymptotic properties, provided that the smoothing parameter (bandwidth) is appropriately selected as a function of the sample size and dimensionality.
	When addressing low-rank matrix regression or completion problems under either absolute or quantile loss, recent studies, including \citet{cambier2016robust}, \citet{li2020nonconvex}, \citet{charisopoulos2021low}, \citet{tong2021low}, \citet{ma2023global}, and \citet{shen2023computationally}, have conducted extensive investigations of sub-gradient descent methods from both computational and statistical perspectives. The offline methods mentioned above crucially hinge on specific regularity properties inherent in the empirical loss function. However, in the online setting, characterized by sequential data arrival and an unknown total number of observations, the assurance of offline regularity properties becomes untenable with only a limited number of available samples. Consequently, neither the offline results nor the corresponding proof techniques are applicable in this online context.
	As an illustration, in an offline setting based on model (\ref{eq:model}), \cite{shen2023computationally} demonstrated the existence of certain parameters $\mu_1, \mu_2>0$ such that with high probability, 
	$$
	\frac{1}{n}\sum_{i=1}^n ( \vert Y_i-\inp{\X_i}{\Bbeta} \vert- \vert Y_i-\inp{\X_i}{\Bbeta^*} \vert )\geq \max\{\mu_1\ltwo{\Bbeta-\Bbeta^*}-\gamma, \mu_2\ltwo{\Bbeta-\Bbeta^*}^2\}
	$$
	holds for all $\Bbeta$ under the sample size requirement $n\geq Cd$, $\gamma=\EE|\xi|$ is the first absolute moment of the noise variable. However, in the context of online learning, where the stored data is limited, such as when $n=1$, the above inequality does not hold in general.

	The foundational contributions to online learning by \cite{duchi2009efficient}, \cite{duchi2011adaptive} and \cite{johnson2013accelerating} employ sub-gradient descent to address scenarios characterized by the lack of differentiability in the loss function at specific points. These studies establish the properties of iterates under various stepsize schemes, with a primary emphasis on the excess risk, leaving the analysis of estimation error  $\ltwo{\Bbeta_t-\Bbeta^*}$ unaddressed. In the absence of strong convexity and/or certain smoothness, the upper bounds on the excess risk cannot be straightforwardly extended to those on the estimation error.
	From the statistical viewpoint, \cite{jiang2022renewable} and \cite{wang2022renewable} consider online quantile regression within the batch learning framework in which the data arrive in batches, and establish their asymptotic normality as the batch size goes to infinity. It is worth noticing that the requirement for diverging batch sizes renders these methods impractical in scenarios where only one observation or a few observations arrive at a time.

	This paper is dedicated to the exploration of online quantile regression with unknown horizon and sample sizes (batch or total). We aim to provide a non-asymptotic analysis of online sub-gradient descent. By using a customized stepsize scheme with explicit dependencies on dimensionality, sample size, and noise scale, we establish optimal rates of convergence for online QR estimators. Additionally, we seek to demonstrate that near-optimal regret performance can be achieved. Let $\calS_t$ be the set of observations acquired at time $t$, which will be used to update the \emph{current} iterate $\Bbeta_{t}$. Consequently, the loss function at $t$ is given by
	$$
	f_t(\Bbeta):=\sum_{(\X_i^{(t)},Y_i^{(t)})\in\calS_t}\rho_{Q,\tau}(Y_i^{(t)}- \langle \X_i^{(t)}, \Bbeta \rangle).
	$$ 
	We will explore three settings: (i) $\calS_t=\calD_t$, containing only one data vector, referred to as the {\it online learning} setting, (ii) $\calS_t=\calD_t$, comprising a set of observations with a size of at least $O(d)$, termed as the {\it online batch learning} setting, and (iii) $\calS_t$ containing all the data vectors accumulated up to time $t$, expressed as $\calS_t=\cup_{l=0}^t\calD_l$, recognized as the {\it sequential learning with infinite storage} setting.

	\emph{Online sub-gradient descent.} Initiated at some $\Bbeta_0$, the online QR estimates are iteratively defined via sub-gradient descent as 
	$$
	\Bbeta_{t+1}=\Bbeta_{t}-\eta_t\g_t, \quad t= 0, 1, \ldots, 
	$$
	where $\g_t\in\partial f_t(\Bbeta_{t})$ is the sub-gradient of $f_t$ at $\Bbeta_{t}$, and $\{ \eta_t \}_{t=0, 1, \ldots }$ constitutes a sequence of stepsize parameters (learning rates). The loss function varies across different settings, exhibiting distinct regularity properties. Consequently, tailored stepsize schemes are imperative in the three settings to attain the desired convergence properties. We will show that these customized stepsize schemes yield statistically optimal estimators and achieve near-optimal regret performances. Importantly, these schemes effectively adapt to varying dimensions and unknown horizons.
	
	Prior to presenting the theoretical guarantees and convergence dynamics, we outline the essential assumptions concerning the model.
	
	\begin{assumption}[{\it Random covariate}]	\label{assump:sensing_operators:vec}
		The covariate vector $\X\in\RR^d$ follows the Gaussian distribution $N(\boldsymbol{0},\bSigma)$, where $\bSigma$ is symmetric and positive definite. There exist absolute constants $\kl,\ku>0$ such that $\kl\leq\lambda_{\min}(\bSigma)\leq\lambda_{\max}(\bSigma)\leq\ku$.
	\end{assumption}
	
	\begin{assumption}[{\it Noise distribution}] \label{assump:heavy-tailed}
		Given $\X \in \RR^d$, the noise variable $\xi$ has the conditional density function $h_{\xi|\X}(\cdot)$ and distribution function $H_{\xi|\X}(\cdot)$, respectively. The conditional $\tau$-th quantile of $\xi$ given $\X$ is zero, i.e., $H_{\xi|\X}(0)=\tau$ and $\EE |\xi|<+\infty$. Let $\gamma=\EE|\xi|$. 
		There exist constants $b_0, b_1>0$ such that \footnote{As marked in \cite{shen2023computationally}, the noise assumptions can be slightly relaxed to $|H_{\xi|\X}(x)-H_{\xi|\X}(0)|\geq |x|/b_0$ for $|x|\leq 8(\ku/\kl)^{1/2}\gamma$ and $|H_{\xi|\X}(x)-H_{\xi|\X}(0)|\leq |x|/b_1$. }
		\begin{enumerate}[(a)]
			\item $\inf_{|x|\leq 8(\ku/\kl)^{1/2}\gamma}h_{\xi|\X}(x)\geq b_0^{-1}$,
			\item $\sup_{x\in \RR} h_{\xi|\X}(x)\leq b_1^{-1} $.
		\end{enumerate}
		
	\end{assumption}
	
	Both local lower and global upper bounds on the (conditional) density $h_{\xi|\X}$ are commonly imposed in the QR literature, particularly in non-asymptotic settings (e.g., \cite{BC2011} and \cite{he2021smoothed}).

	\subsection{Online Learning}\label{sec:onesample}
	
	The server receives a new data point $(\X_t,Y_t)$ that follows the linear model \eqref{eq:model}. The online sub-gradient descent algorithm calculates the corresponding sub-gradient $\g_t \in \partial f_t(\Bbeta_t)$ and subsequently updates the current estimate $\Bbeta_t$ in the direction of the negative sub-gradient, all {\it without} storing the new observation. Here, the loss function at time $t$ is given by $f_t(\Bbeta)=\rho_{Q,\tau}(Y_t-\X_t^{\top}\Bbeta)$. More specifically, the algorithm proceeds as follows. We begin with an arbitrary initialization $\Bbeta_0$. At each time step $t$, given the current estimate $\Bbeta_t$ and a newly arrived observation $(\X_t,Y_t)$, the update is performed as 
	$$
	\Bbeta_{t+1}=\Bbeta_{t}-\eta_t\cdot (\tau-\mathbb{I}\{Y_t-\langle\X_t,\Bbeta_t\rangle\})\cdot \X_t,
	$$ 
	where $\eta_t>0$ denotes the stepsize. The choice of $\eta_t$ will be discussed in Theorem~\ref{thm:one_sample}.
	
	The expected excess risk $\EE[f_t(\Bbeta)-f_t(\Bbeta^*) ]$, as a function of $\Bbeta$, demonstrates a two-phase regularity property, visually depicted in Figure~\ref{fig:regularity-one}. As an illustrative example, under the assumptions $\X\sim N(\boldsymbol{0},\I_d)$ and $\xi\sim N(0,\sigma^2)$, \cite{shen2023computationally} derived the population excess risk as
	$$
	\EE \big\{ |\X^{\top}(\Bbeta-\Bbeta^*)+\xi|-|\xi|\big\}=\sqrt{\frac{\pi}{2}}\frac{\|\Bbeta-\Bbeta^*\|^2}{\sqrt{\|\Bbeta-\Bbeta^*\|^2+\sigma^2}+\sigma}.
	$$When $\Bbeta$ is distant from the population risk minimizer $\Bbeta^{\ast}$, the expected excess risk exhibits a first-order lower bound, decaying linearly in $\| \Bbeta-\Bbeta^{\ast} \|_2$. However, as $\Bbeta$ approaches proximity to $\Bbeta^{\ast}$, a quadratic lower bound emerges concerning $\|\Bbeta-\Bbeta^{\ast}\|_2$. For general noise with bounded density, a similar equation is achievable and a two-phase property exists accordingly. Importantly, it is noteworthy that, in contrast to offline works such as \citep{tong2021low, shen2023computationally}, this regularity property pertains to the expectation, given that the empirical loss is based on only one single observation.
	The empirical loss lacks a guaranteed high probability concentration property; for instance, the variance of $f_t(\Bbeta)$ may significantly overshadow its expectation. As a result, the commonly observed monotone-decreasing trend in the estimation error, such as $\ltwo{\Bbeta_{t+1}-\Bbeta^* }\leq \ltwo{\Bbeta_{t}-\Bbeta^* }$ in offline learning algorithms, does not occur. In fact, a more nuanced analysis is essential to understand the convergence dynamics of online sub-gradient algorithms for quantile regression.
	
	\begin{figure}
		\centering
		\includegraphics[width=0.5\textwidth]{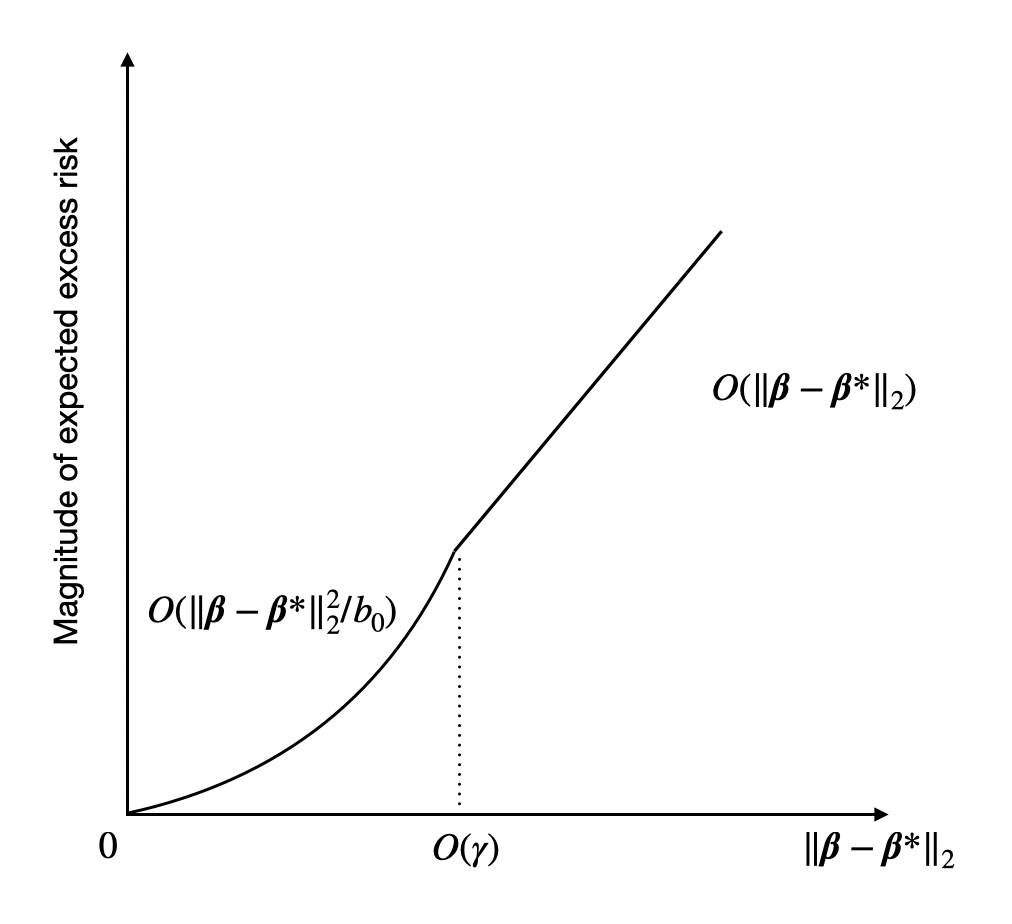}
		\caption{{\it Expected excess risk of the objective function.}   
			$Y$-axis: lower bound of $\EE[f_t(\Bbeta)-f_t(\Bbeta^*) ]$; $X$-axis: the value of $\|\Bbeta-\Bbeta^{\ast}\|_2$. It shows that the lower bound varies from a linear to quadratic dependence on $\|\Bbeta-\Bbeta^{\ast}\|_2$ as $\Bbeta$ gets closer to $\Bbeta^*$. 
		}
		\label{fig:regularity-one}
	\end{figure}

	Interestingly, the following theorem illustrates that, despite the online sub-gradient descent algorithm's inability to ensure error contraction as $\ltwo{\Bbeta_{t+1}-\Bbeta^* }<\ltwo{\Bbeta_{t}-\Bbeta^* }$, the error rates $\|\Bbeta_t-\Bbeta^{\ast}\|_2$ can be bounded by a monotone-decreasing sequence with high probability. The convergence of the online sub-gradient descent algorithm unfolds in two distinct phases, as suggested by the two-phase regularity properties depicted in Figure~\ref{fig:regularity-one}.

	\begin{theorem}
		Under Assumptions~\ref{assump:sensing_operators:vec} and \ref{assump:heavy-tailed} (a), there exist universal positive constants $c_0,c_1,c_2,c_3,C_0$ such that, for an arbitrary initialization $\Bbeta_{0} \in\RR^d$, the sequence $\{\Bbeta_t\}_{t\geq  1}$ generated by the online sub-gradient descent algorithm follows the dynamics outlined below:
		\begin{enumerate}
			\item in phase one\footnote{With this geometrically decaying stepsize, once $\|\Bbeta_t-\Bbeta^*\|_2\asymp O(\gamma)$ is achieved, the estimation error will remain to be this scale both theoretically and practically.}, i.e., when $\ltwo{\Bbeta_{t}-\Bbeta^*}\geq 8 \kl^{-1/2}\gamma$, by selecting a stepsize $\eta_0= \frac{C\sqrt{C_l}}{C_u \bar{\tau}^2 } \frac{D_0}{d} $ and $\eta_t=(1-c_5\frac{\kl}{\ku}\frac{1}{d\bar{\tau}^2})\eta_{t-1}$ with any $D_0\geq\|\Bbeta_0-\Bbeta^*\|$, $C>1$, $c_5<0.1$, it holds with probability at least  $1-\exp(-c_0d)$ that 
			$$\ltwo{\Bbeta_{t+1}-\Bbeta^*}^2\leq C_0\left(1-c_5\frac{\kl}{\ku}\frac{1}{\bar{\tau}^2 d}\right)^{t+1}\cdot\ltwo{\Bbeta_{0}-\Bbeta^*}^2; \footnote{We remark that $C_0$ depends on the choices of $C,c_5$ and we leave the details in the proof.}$$
			the conclusion of phase one occurs after $t_1=O(d\log(\ltwo{\Bbeta_{0}-\Bbeta^*}/\gamma))$ iterations, where $\bar \tau = \max\{\tau, 1-\tau\}$.
			
			\item in phase two, i.e., when $\ltwo{\Bbeta_{t}-\Bbeta^*}< 8\kl^{-1/2} \gamma$, by choosing the stepsize $\eta_t=(b_0/\kl)\cdot(\ca/(t-t_1+\cb d))$ with certain constants $\ca>12$ and $\cb >\ca/(12d)$, it holds with probability exceeding $1-\exp(-c_0d)$ that
			$$
			\ltwo{\Bbeta_{t+1}-\Bbeta^*}^2\leq C^*\frac{\ku}{\kl^2}\frac{d}{t+1-t_1+\cb d} b_0^2,
			$$
			where the constant $C^*$ depends on $\ca,\cb$.
		\end{enumerate}
		\label{thm:one_sample}
	\end{theorem}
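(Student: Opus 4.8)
The plan is to track the squared error $\ltwo{\bd_t}^2$ with $\bd_t := \Bbeta_t-\Bbeta^*$ through the one-step identity
\[
\ltwo{\bd_{t+1}}^2 = \ltwo{\bd_t}^2 - 2\eta_t\inp{\g_t}{\bd_t} + \eta_t^2\ltwo{\g_t}^2,
\]
and to split the cross term into a conditional drift and a mean-zero fluctuation with respect to the filtration $\calF_t=\sigma(\Bbeta_0,\{(\X_s,\xi_s)\}_{s<t})$, for which $\bd_t$ is measurable while $(\X_t,\xi_t)$ is fresh. Writing the sub-gradient as $\g_t=-(\tau-\mathbb{I}\{r_t<0\})\X_t$ with residual $r_t=\xi_t-\inp{\X_t}{\bd_t}$ and using $H_{\xi|\X}(0)=\tau$, a direct conditional computation gives the drift
\[
\EE[\inp{\g_t}{\bd_t}\mid\calF_t] = \EE_{\X_t}\big[(H_{\xi|\X}(\inp{\X_t}{\bd_t})-H_{\xi|\X}(0))\,\inp{\X_t}{\bd_t}\big]\geq 0,
\]
which is exactly the directional derivative of the population excess risk pictured in Figure~\ref{fig:regularity-one}. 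First I would establish its two-phase lower bound: integrating against the Gaussian law $\inp{\X_t}{\bd_t}\sim N(0,\bd_t^\top\bSigma\bd_t)$, the local density bound of Assumption~\ref{assump:heavy-tailed}(a) yields a quadratic bound $\gtrsim(\kl/b_0)\ltwo{\bd_t}^2$ once $\ltwo{\bd_t}<8\kl^{-1/2}\gamma$, whereas for $\ltwo{\bd_t}\geq 8\kl^{-1/2}\gamma$ the Gaussian tails together with $H_{\xi|\X}(\pm\infty)-\tau\in\{1-\tau,-\tau\}$ give a linear bound $\gtrsim\sqrt{\kl}\min\{\tau,1-\tau\}\,\ltwo{\bd_t}$.

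The single structural fact I would exploit throughout is that the check-loss sub-gradient factors as $\g_t=-\psi_\tau(r_t)\X_t$ with $\psi_\tau(r_t)=\tau-\mathbb{I}\{r_t<0\}\in[\tau-1,\tau]$ \emph{bounded regardless of the noise tails}. Hence $\ltwo{\g_t}^2\leq\bar\tau^2\ltwo{\X_t}^2$, and the fluctuation $\inp{\g_t}{\bd_t}-\EE[\inp{\g_t}{\bd_t}\mid\calF_t]$ is, conditionally on $\calF_t$, sub-Gaussian with parameter of order $\bar\tau\sqrt{\ku}\,\ltwo{\bd_t}$; all the heavy-tailedness of $\xi$ is absorbed into the indicator. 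Combined with the Gaussian concentration bounds $\ltwo{\X_t}^2\leq C\ku d$ and $\bd_t^\top\bSigma\bd_t\leq\ku\ltwo{\bd_t}^2$, each holding with probability $1-\exp(-c_0 d)$, this is precisely what upgrades the polynomial-type guarantees of \citet{han2022online} to the exponential-in-$d$ failure probability.

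For phase one I would run an induction on the envelope $\ltwo{\bd_t}^2\leq C_0(1-a)^t\ltwo{\bd_0}^2$ with $a=c_5(\kl/\ku)(\bar\tau^2 d)^{-1}$. The geometric stepsize $\eta_t=(1-a)^t\eta_0$ with $\eta_0\asymp\sqrt{\kl}D_0/(\ku\bar\tau^2 d)$ is designed so that $\eta_t$ stays proportional to the current error scale $\ltwo{\bd_t}/d$ along the trajectory; feeding the linear drift and $\eta_t^2\ltwo{\g_t}^2\lesssim\eta_t^2\bar\tau^2\ku d$ into the recursion produces a per-step multiplicative decrease $1-(\kl/(\ku\bar\tau^2 d))(2Cc-C^2)$, which for $1<C<2c$ is a genuine contraction and pins down the admissible range of $c_5$. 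The sub-Gaussian fluctuation term $2\eta_t Z_t$, with parameter $\propto\eta_t\ltwo{\bd_t}$, is handled by a Freedman/Azuma argument so the envelope survives simultaneously for all $t\leq t_1$, and summing $\log(1-a)^{-1}\asymp a$ over the iterations needed to shrink $\ltwo{\bd_0}$ down to scale $\gamma$ yields $t_1=O(d\log(\ltwo{\bd_0}/\gamma))$. Phase two is then the classical $O(1/t)$-stepsize regime, now with the quadratic drift playing the role of strong convexity with effective modulus $\mu\asymp\kl/b_0$: with $\eta_t=(b_0/\kl)\ca/(t-t_1+\cb d)$ and $\EE[\ltwo{\g_t}^2\mid\calF_t]\lesssim\ku d$, the recursion $\EE[\ltwo{\bd_{t+1}}^2\mid\calF_t]\leq(1-2\mu\eta_t)\ltwo{\bd_t}^2+\eta_t^2\ku d$ unrolls to $O(\ku d\, b_0^2/(\kl^2 t))$, matching the stated $C^*(\ku/\kl^2)d b_0^2/(t+1-t_1+\cb d)$, with $\ca>12$ and $\cb>\ca/(12d)$ exactly the conditions keeping $2\mu\eta_t<1$ and seeding the induction at $t=t_1$ from the phase-one output $\ltwo{\bd_{t_1}}\asymp\gamma$.

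The hard part, I expect, is not either phase in isolation but the \emph{uniform-in-$t$ high-probability} bookkeeping in the complete absence of strong convexity of the one-sample empirical loss: the error is not monotone, the drift holds only conditionally, and the per-step sub-Gaussian fluctuations must be shown not to accumulate over the possibly exponentially long horizon. Closing this requires a supermartingale control of the envelope together with the exponential concentration of the Gaussian design, plus careful handling at the phase boundary to guarantee that the iterate genuinely enters and remains in the region $\ltwo{\bd_t}\lesssim\gamma$ before the $1/t$ schedule takes over.
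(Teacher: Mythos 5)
Your proposal follows essentially the same route as the paper's proof: the same one-step expansion of $\ltwo{\Bbeta_{t+1}-\Bbeta^*}^2$, the same drift-plus-martingale decomposition with the two-phase (linear, then quadratic in $\ltwo{\Bbeta_t-\Bbeta^*}$) lower bound on the conditional drift, the same induction on a deterministic geometric and then $O(1/t)$ envelope, and the same Azuma-type sub-Gaussian concentration (the paper uses Shamir's variant) to obtain the $\exp(-c_0 d)$ failure probability along the whole trajectory. The one substantive caution concerns your linear-phase drift bound $\gtrsim\sqrt{\kl}\min\{\tau,1-\tau\}\ltwo{\Bbeta_t-\Bbeta^*}$ derived from the saturation of $H_{\xi|\X}$ at $\pm\infty$: the assumptions provide no quantitative tail control beyond $\EE|\xi|=\gamma$ and a local density lower bound, so that constant is not directly available (and degrades as $\tau\to 0$ or $1$); the paper instead uses the sub-gradient inequality $\inp{\g_t}{\Bbeta_t-\Bbeta^*}\geq f_t(\Bbeta_t)-f_t(\Bbeta^*)$ together with the $\tau$-free bound $\EE[f_t(\Bbeta_t)-f_t(\Bbeta^*)\mid\Bbeta_t]\geq\sqrt{\kl/(2\pi)}\,\ltwo{\Bbeta_t-\Bbeta^*}-\gamma$, a fix your decomposition also admits via convexity.
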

	
	In accordance with Theorem~\ref{thm:one_sample}, the algorithm exhibits linear convergence during the first phase, concluding after $t_1$ iterations. This phase requires a total of $t_1$ observations and employs an approximately geometrically decaying stepsize scheme. In the second phase, the algorithm converges sub-linearly, employing a  $O(1/(t+d))$ decaying stepsize scheme. The failure probability in both phases decays exponentially in $d$.
	To our knowledge, our result stands as the first instance in online learning literature to provide an exponential-type high probability bound. Previous works employing the squared loss generally guarantee a polynomial-type tail probability, even under sub-Gaussian noise \citep{cai2023online,han2022online,bastani2020online}.  This enhancement holds significant implications for practical applications. It suggests that our algorithm remains applicable even when the ultimate horizon $T$ is as large as  $e^{O(d)}$. Theorem \ref{thm:one_sample} asserts that, provided $d\ll T\ll e^{c_0d}$, the final estimator $\Bbeta_T$, with high probability, attains an error rate that is minimax optimal under offline settings. Online methods can adapt to the unknown horizon, and offer substantial reductions in computation costs, as well as savings in memory and storage.

	The final error rate achieved after the second phase iterations is proportional to the noise level, independent of the initialization error. In existing works, the initialization error often exerts a lasting impact on the final error rate, even when using the squared loss. For instance, the online algorithm \cite{fan2018statistical} obtains an expected error rate $\EE\ltwo{\Bbeta_T-\Bbeta^*}^2\leq d\ltwo{\Bbeta_0-\Bbeta^*}^2\log (T)/T$. Moreover, the error rate  established by \cite{cai2023online} is $\ltwo{\Bbeta_T-\Bbeta^*}^2\leq d\ltwo{\Bbeta_0-\Bbeta^*}^2/T$, which holds with high probability as long as $T\ll {\rm poly}(d)$. Theorem~\ref{thm:one_sample} reveals that the initialization error only has a short-time effect in the online sub-gradient descent algorithm, dissipating after the second phase iterations begin.  This two-phase convergence phenomenon is elucidated through a meticulous analysis of the algorithm's dynamics. It is worth noting that the two-phase convergence and the transient impact of initialization error are not exclusive to quantile regression; they also manifest in least squares, as per our analysis framework. A comprehensive discussion of these properties for online least squares will be presented in Section~\ref{sec:simulation}. Numerical simulations in that section demonstrate that a two-phase stepsize scheme yields significantly improved accuracy.

	\begin{corollary}	\label{cor:one_sample}
		Assume that the same conditions as in Theorem~\ref{thm:one_sample} hold, and let the horizon satisfy $T\geq C^*d \max\{1, \log(\sqrt{\kl}\ltwo{\Bbeta_{0}-\Bbeta^*}/\gamma)\}$ with a constant $C^{\ast}$ depending on $(\kl,\ku,\ca,\cb)$. Then, with probability at least $1-T\exp(-c_0d)$, the online sub-gradient descent algorithm produces a final estimate with an error rate
		$$
		\ltwo{\Bbeta_T-\Bbeta^*}^2\leq C_3\frac{d}{T}b_0^2,
		$$
		where $C_3>0$ is a constant. 
	\end{corollary}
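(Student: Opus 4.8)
The plan is to obtain the stated rate directly from the phase-two bound of Theorem~\ref{thm:one_sample}, invoking the horizon lower bound only to guarantee that phase one has terminated and that the shift $t_1$ appearing in the phase-two denominator is negligible relative to $T$. First I would record, from the phase-one conclusion of Theorem~\ref{thm:one_sample}, that the algorithm enters phase two (i.e., $\ltwo{\Bbeta_{t}-\Bbeta^*}<8\kl^{-1/2}\gamma$) after at most $t_1=O\big(d\log(\ltwo{\Bbeta_0-\Bbeta^*}/\gamma)\big)$ steps; if the initialization already satisfies $\ltwo{\Bbeta_0-\Bbeta^*}<8\kl^{-1/2}\gamma$, then phase one is empty and $t_1=0$, which is precisely the situation covered by the factor $\max\{1,\cdot\}$ in the hypothesis. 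Hence there is a constant $c$, depending only on the model constants, with $t_1\leq c\,d\max\{1,\log(\sqrt{\kl}\ltwo{\Bbeta_0-\Bbeta^*}/\gamma)\}$, and choosing the horizon constant $C^*$ in the hypothesis at least $2c$ forces $T\geq 2t_1$, so that $T-t_1\geq T/2$.

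Next I would instantiate the phase-two bound at $t+1=T$, which reads
$$
\ltwo{\Bbeta_T-\Bbeta^*}^2\leq \tilde{C}\,\frac{\ku}{\kl^2}\,\frac{d}{\,T-t_1+\cb d\,}\,b_0^2,
$$
where $\tilde{C}$ is the phase-two constant depending on $(\ca,\cb)$. Since $T-t_1+\cb d\geq T-t_1\geq T/2$, the denominator is at least $T/2$, and therefore
$$
\ltwo{\Bbeta_T-\Bbeta^*}^2\leq 2\tilde{C}\,\frac{\ku}{\kl^2}\,\frac{d}{T}\,b_0^2 = C_3\,\frac{d}{T}\,b_0^2,
$$
with $C_3:=2\tilde{C}\ku/\kl^2$ independent of the initialization. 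For the probability statement, I would observe that the per-step events underpinning the linear contraction in phase one and the sublinear bound in phase two each hold with probability at least $1-\exp(-c_0 d)$; since the trajectory $\Bbeta_0,\dots,\Bbeta_T$ involves at most $T$ such steps, a union bound shows that all required inequalities hold simultaneously on an event of probability at least $1-T\exp(-c_0 d)$, on which the displayed chain is valid.

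The main obstacle—modest, since the heavy lifting resides in Theorem~\ref{thm:one_sample}—is the bookkeeping that aligns the shifted phase-two clock $t-t_1$ with the absolute horizon $T$. One must calibrate $C^*$ so that $t_1$ is dominated by $T$ uniformly over the initialization and the problem parameters, reconciling the $\log(\sqrt{\kl}\ltwo{\Bbeta_0-\Bbeta^*}/\gamma)$ in the threshold with the $\log(\ltwo{\Bbeta_0-\Bbeta^*}/\gamma)$ governing $t_1$ (the discrepancy being a constant absorbed by $c$ and the $\max\{1,\cdot\}$ term), and handle the well-initialized regime where phase one is vacuous. One must also confirm that aggregating the per-step guarantees through a union bound yields exactly the $1-T\exp(-c_0 d)$ probability without any independence assumption across steps, since it is the underlying good events, rather than the iterates themselves, that are being intersected.
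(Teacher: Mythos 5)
Your proposal is correct and follows essentially the same route the paper intends: the corollary is an immediate consequence of Theorem~\ref{thm:one_sample}, obtained by bounding $t_1$ via the phase-one contraction, instantiating the phase-two bound at $t+1=T$ with $T-t_1+\cb d\geq T/2$, and taking a union bound over the at most $T$ per-step good events to get the $1-T\exp(-c_0d)$ probability. The bookkeeping points you flag (calibrating $C^*$ against $t_1$ and absorbing the $\sqrt{\kl}$ factor into constants) are exactly the right ones and are handled correctly.
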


	Corollary~\ref{cor:one_sample} demonstrates that the online sub-gradient descent algorithm attains the minimax optimal error rate, provided the horizon is not too small. This makes it a favorable choice over the offline approach, considering the benefits of reduced computation and storage costs. However, it is advisable to opt for offline approaches in cases where the horizon is small. A comprehensive examination of their numerical performances is conducted in Section~\ref{sec:simulation}.

	\begin{remark}[{\it Trade-off between short-term accuracy and long-term optimality}]
		Suppose the initialization is already situated in the second phase region, i.e., $\ltwo{\Bbeta_{0}-\Bbeta^*}< 8 \kl^{-1/2}\gamma$. It is advisable to initiate the phase two iterations directly ($t_1=0$), where the choice of the parameter $C_a$ plays a pivotal role in determining short-term accuracy and long-term optimality. Selecting $C_a>12$, according to Theorem~\ref{thm:one_sample}, the updated estimates achieve an error rate of $\ltwo{\Bbeta_{t}-\Bbeta^*}^2\leq C^*\frac{\ku}{\kl^2}\frac{d}{t+\cb d} b_0^2$ with high probability. This upper bound may exceed the initialization error when $t$ is small, indicating potential accuracy fluctuations in the early stage. Nevertheless, the algorithm eventually converges and yields a statistically optimal estimator.
		On the contrary, opting for a sufficiently small parameter $\ca<\min\{12, (\kl/\sqrt{\ku})\ltwo{\Bbeta_{0}-\Bbeta^*}/b_0\}$ results in a more stable convergence in the early stage. However, it sacrifices long-term accuracy, as the updated estimates possess an error bound
		$$
		\ltwo{\Bbeta_{t}-\Bbeta^*}^2\leq \bar{C}^*\left(\frac{d}{t+\cb d}\right)^{\frac{\ca}{12}}\cdot \ltwo{\Bbeta_{0}-\Bbeta^*}^2,
		$$
		achieving a sub-optimal error rate in the end. 
		This phenomenon is illustrated in Figure~\ref{fig:CaCb}. Specifically, in Figure~\ref{fig:CaCb0.2}, when the initialization satisfies $\ltwo{\Bbeta_0-\Bbeta^*}\leq 0.2\EE|\xi|$, it demonstrates that a small $C_a$ ensures short-term accuracy but compromises long-term optimality. On the other hand, a large $C_a$ guarantees long-term optimality at the expense of short-term accuracy. Figure~\ref{fig:CaCb0.02} presents even more pronounced differences, where the initialization satisfies $\ltwo{\Bbeta_0-\Bbeta^*}\leq 0.02\EE|\xi|$.
		\label{rmk:one_sample_init}
	\end{remark}
	
	\begin{figure}
		\centering
		\begin{subfigure}[b]{0.45\textwidth}
			\centering
			\includegraphics[width=\textwidth]{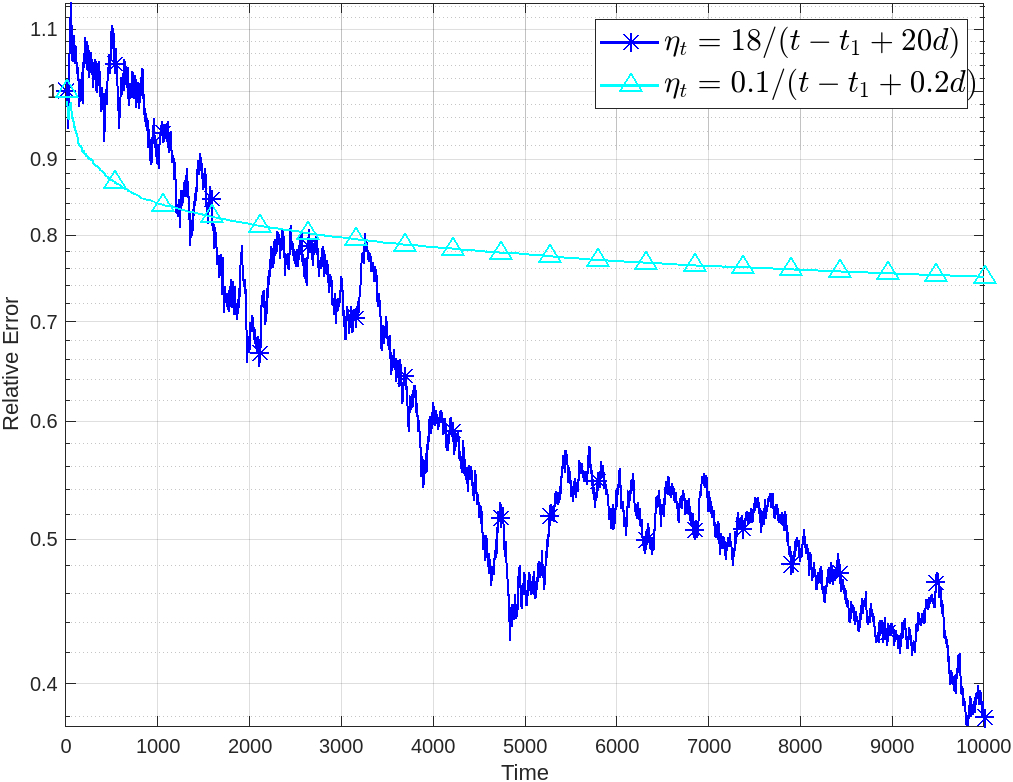}
			\caption{$\frac{\ltwo{\Bbeta^*}}{\EE|\xi|}=0.2$}
			\label{fig:CaCb0.2}
		\end{subfigure}
		\hfill
		\begin{subfigure}[b]{0.45\textwidth}
			\centering
			\includegraphics[width=\textwidth]{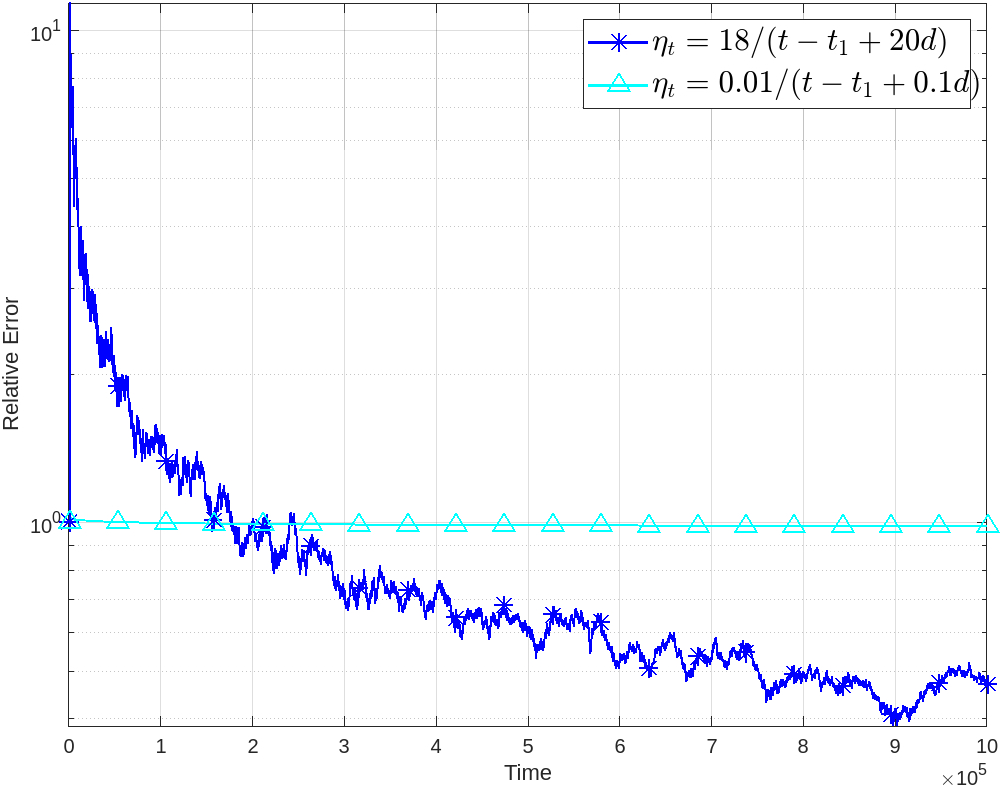}
			\caption{$\frac{\ltwo{\Bbeta^*}}{\EE|\xi|}=0.02$}
			\label{fig:CaCb0.02}
		\end{subfigure}
		\caption{{\it Trade-off between short-term accuracy and long-term optimality}. The initialization $\Bbeta_0=\boldsymbol{0}$ is already in proximity to  $\Bbeta^{\ast}$, enabling the online sub-gradient descent algorithm to bypass the first phase and start the second phase iterations immediately. A small stepsize can ensure short-term accuracy but may compromise losing long-term optimality, whereas a large stepsize can guarantee long-term optimality but at the expense of short-term accuracy. Here, the dimension is set to $d=100$, and the noise follows a Student $t_{\nu}$ distribution with $\nu=1.1$.}
		\label{fig:CaCb}
	\end{figure}

	\begin{theorem}
		Under the same conditions and stepsize scheme as in Theorem~\ref{thm:one_sample}, the online sub-gradient descent algorithm achieves an upper bound on regret given by
		\begin{align*}
			\textsf{Regret}_T\leq C_1(\ku/\kl)\sqrt{\ku} \, d\ltwo{\Bbeta_{0}-\Bbeta^*}+C_2\ca\frac{\ku^2}{\kl^2}\frac{b_0^2}{b_1}d\log\bigg(1+ \frac{T }{\cb d}\bigg),
		\end{align*}
		\label{thm:one_sample_regret}
		for any $T$ with probability over $1-t_1\exp(-cd)$.
	\end{theorem}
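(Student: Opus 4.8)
The plan is to split the regret sum $\sum_{t=0}^{T}\EE[f_t(\Bbeta_t)-f_t(\Bbeta^*)]$ at the phase boundary $t_1$ and to control each per-step excess risk by two complementary estimates: a \emph{linear} bound that is sharp when $\Bbeta_t$ is far from $\Bbeta^*$, and a \emph{quadratic} bound that is sharp when $\Bbeta_t$ is near. Writing $\bd_t=\Bbeta_t-\Bbeta^*$ and conditioning on the history $\calF_{t-1}$ (so that $\Bbeta_t$ is frozen while $\X_t\sim N(\mathbf 0,\bSigma)$ is fresh and independent), Knight's decomposition of the check loss together with the conditional-quantile condition $H_{\xi|\X}(0)=\tau$ gives
\begin{align*}
\EE\big[f_t(\Bbeta_t)-f_t(\Bbeta^*)\,\big|\,\calF_{t-1}\big]
=\EE\left[\int_0^{\X_t^\top\bd_t}\big(H_{\xi|\X_t}(s)-H_{\xi|\X_t}(0)\big)\,\mathrm{d}s\,\Big|\,\calF_{t-1}\right].
\end{align*}
From this, the global upper bound $|H_{\xi|\X}(s)-H_{\xi|\X}(0)|\le|s|/b_1$ of Assumption~\ref{assump:heavy-tailed}(b) yields the quadratic estimate $\EE[f_t(\Bbeta_t)-f_t(\Bbeta^*)\mid\calF_{t-1}]\le \frac{1}{2b_1}\bd_t^\top\bSigma\bd_t\le \frac{\ku}{2b_1}\ltwo{\bd_t}^2$, while the $\bar\tau$-Lipschitz continuity of $\rho_{Q,\tau}$ and $\EE|\X_t^\top\bd_t|\le\sqrt{\ku}\,\ltwo{\bd_t}$ yield the linear estimate $\EE[f_t(\Bbeta_t)-f_t(\Bbeta^*)\mid\calF_{t-1}]\le\bar\tau\sqrt{\ku}\,\ltwo{\bd_t}$.

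For the first-phase indices $t\le t_1$ I would invoke the \emph{linear} estimate together with the geometric decay of part~1 of Theorem~\ref{thm:one_sample}, $\ltwo{\bd_t}\le\sqrt{C_0}\,q^{\,t}\ltwo{\bd_0}$ with $q=(1-c_5\frac{\kl}{\ku}\frac{1}{\bar\tau^2 d})^{1/2}$. Summing the geometric series and using $1-q\asymp\frac{c_5}{2}\frac{\kl}{\ku\bar\tau^2 d}$ gives $\sum_{t\le t_1}\bar\tau\sqrt{\ku}\,\ltwo{\bd_t}\lesssim\bar\tau\sqrt{\ku}\,\ltwo{\bd_0}\cdot\frac{\ku\bar\tau^2 d}{\kl}\asymp(\ku/\kl)\sqrt{\ku}\,d\,\ltwo{\bd_0}$, which is the first term (with the $\bar\tau$ powers absorbed into $C_1$). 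Using the quadratic estimate here would instead produce a dependence on $\ltwo{\bd_0}^2$, so it is essential to use the linear one while $\Bbeta_t$ is far from $\Bbeta^*$.

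For the second-phase indices $t>t_1$ I would use the \emph{quadratic} estimate with the sub-linear rate of part~2, $\ltwo{\bd_t}^2\le C^*\frac{\ku}{\kl^2}\frac{d}{t+1-t_1+\cb d}b_0^2$. Substituting and summing the harmonic-type series $\sum_{t>t_1}\frac{1}{t+1-t_1+\cb d}\asymp\log(1+\frac{T}{\cb d})$ gives $\frac{\ku}{2b_1}\cdot C^*\frac{\ku}{\kl^2}d\,b_0^2\log(1+\frac{T}{\cb d})\asymp C_2\ca\frac{\ku^2}{\kl^2}\frac{b_0^2}{b_1}d\log(1+\frac{T}{\cb d})$, the second term, the $\ca$-dependence entering through $C^*$. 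Adding the two contributions gives the stated bound. For the success probability I would intersect the first-phase event, a union bound over the $t_1$ steps each failing with probability at most $\exp(-c_0 d)$ and hence holding with probability $\ge 1-t_1\exp(-c_0 d)$, with the single trajectory-wise second-phase event of Theorem~\ref{thm:one_sample} (failing with probability $\exp(-c_0 d)$), absorbing the total into $1-t_1\exp(-cd)$.

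The main obstacle I anticipate is reconciling the high-probability control of $\ltwo{\bd_t}$ with the fact that $\textsf{Regret}_T$ in \eqref{eq:regret} is a genuine expectation. The two per-step bounds control the conditional expected excess risk $r_t:=\EE[f_t(\Bbeta_t)-f_t(\Bbeta^*)\mid\calF_{t-1}]$ pointwise in $\ltwo{\bd_t}$, so the arguments above directly produce a high-probability bound on $\sum_t r_t$; passing to $\EE[\sum_t r_t]$ requires that the complementary event contribute negligibly. I would handle this by bounding $r_t$ crudely on the bad event and exploiting the exponential smallness $\exp(-c_0 d)$ of its probability, so that its contribution is dominated whenever $T\ll e^{c_0 d}$, the same horizon regime already imposed for the error-rate guarantees. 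A secondary point requiring care is that the linear/quadratic switch happens exactly at $\ltwo{\bd_t}\asymp\kl^{-1/2}\gamma$, where both estimates are of the same order, so no loss is incurred by changing which bound is used across $t_1$.
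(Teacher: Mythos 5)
Your plan follows the paper's proof almost step for step: the regret is split at $t_1$, the first-phase contribution is controlled by the Lipschitz (linear) excess-risk bound $\bar{\tau}\sqrt{\ku}\,\ltwo{\Bbeta_t-\Bbeta^*}$ summed geometrically, and the second-phase contribution by the quadratic bound $\EE[f_t(\Bbeta_t)-f_t(\Bbeta^*)\,|\,\Bbeta_t]\leq(\ku/b_1)\ltwo{\Bbeta_t-\Bbeta^*}^2$ (Lemma~\ref{teclem:loss_expectation}) summed as a harmonic series; both per-step estimates and both summations are exactly those used in the paper. The one place where your sketch is thinner than the paper is precisely the obstacle you flag at the end: converting high-probability control of $\ltwo{\Bbeta_t-\Bbeta^*}$ into control of its first and second moments. ``Bounding $r_t$ crudely on the bad event'' is not by itself sufficient, because on that event $\ltwo{\Bbeta_t-\Bbeta^*}$ admits no almost-sure bound; you need an \emph{unconditional} moment bound to pair with the exponentially small probability. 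The paper supplies this in a dedicated result (Lemma~\ref{lem:one_sampe_expectation}, explicitly noted not to be a consequence of Theorem~\ref{thm:one_sample}): for phase one it drops the nonnegative expected-excess-risk term from the update recursion to get the crude bound $\EE\ltwo{\Bbeta_{t+1}-\Bbeta^*}^2\leq\EE\ltwo{\Bbeta_t-\Bbeta^*}^2+2\eta_t^2\bar{\tau}^2\ku d\leq CD_0^2$, and then combines this with the good-event estimate via $\EE\ltwo{\Bbeta_{t+1}-\Bbeta^*}\leq CD_{t+1}+\PP(\calE_{t+1}^{\rm c})\,(\EE\ltwo{\Bbeta_{t+1}-\Bbeta^*}^2)^{1/2}$; for phase two it sidesteps the event decomposition entirely by running the contraction recursion \emph{directly in expectation}, using the quadratic lower bound on the conditional expected excess risk together with an induction on $\EE\ltwo{\Bbeta_t-\Bbeta^*}^2\leq C\frac{\ku}{\kl^2}\frac{\ca d}{t-t_1+\cb d}b_0^2$. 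With that lemma in place, your two summations go through verbatim and yield the two terms of the stated bound, so the missing piece is a short, fillable argument rather than a flaw in the approach.
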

	
	The regret upper bound exhibits logarithmic growth with respect to the horizon $T$, aligning with the well-established optimal regret bounds in online optimization \citep{hazan2007logarithmic,orabona2019modern}. The other term in the regret upper bound is solely dependent on the initialization error and remains unaffected by the horizon $T$. This indicates that the initialization error does not exert a persistent influence over the regret. In contrast, regret upper bounds established by \cite{hazan2007logarithmic}, \cite{orabona2019modern}, \cite{cai2023online}, and \cite{han2022online} all involve a multiplicative term of the form $\ltwo{\Bbeta_{0}-\Bbeta^*}^2\log T$, indicating that poor initialization may lead to substantially larger regret.

	\subsection{Batch Learning}\label{sec:batch}

	Suppose the server receives a batch of data points $\calD_t=\{(\X_i^{(t)},Y_i^{(t)})\}_{i=1}^{n_t}$ at each time $t$, consisting of $n_t$ i.i.d. observations that follow the linear model (\ref{eq:model}). The  loss function at time $t$ is given by 
	\begin{align}
		f_t(\Bbeta)=\frac{1}{n_t}\sum_{i=1}^{n_t} \rho_{Q,\tau}(Y_i^{(t)}-\X_i^{(t)\top}\Bbeta),
		\label{eq:loss-batch}
	\end{align}
	which represents the empirical risk based on $n_t$ observations. The special case considered in Section~\ref{sec:onesample}, where only a single data point is observed, corresponds to $n_t=1$. As $n_t$ increases, the objective function (\ref{eq:loss-batch}) becomes more amenable to analysis due to the concentration phenomenon. As before, we allow for an arbitrary initialization $\Bbeta_0$. At each time $t$, the current estimate is $\Bbeta_t$, and upon receiving the new batch $\{(\X_i^{(t)},Y_i^{(t)})\}_{i=1}^{n_t}$, the update is given by
	$$
	\Bbeta_{t+1}=\Bbeta_{t}-\eta_t\cdot \frac{1}{n_t}\sum_{i=1}^{n_t}(\tau-\mathbb{I}\{Y_i^{(t)}-\langle\X_i^{(t)},\Bbeta_t\rangle\})\cdot \X_i^{(t)},
	$$ 
	where $\eta_t$ denotes the stepsize. The choice of $\eta_t$ will be discussed in Theorem~\ref{thm:Batch}. In contrast to our focus, \cite{jiang2022renewable} and \cite{wang2022renewable} examine asymptotic batch learning for quantile regression, specifically in the regime where the batch size $n_t$ tends to infinity. Separately, \cite{do2009proximal} investigates batch learning from an optimization-theoretic perspective. More recently, the advantages of batch learning have been extensively explored in the literature on bandit algorithms \citep{ren2023dynamic, gao2019batched, han2020sequential}. Despite these advances, the computational and statistical aspects of batched quantile regression remain largely underexplored. Relative to the setting considered in Section~\ref{sec:batch}, access to larger batches provides technical advantages in analyzing the non-smooth objective function, especially when the current iterate is far from the ground truth, and gives rise to distinct theoretical properties.

	The excess risk and sub-gradient of the objective function play central roles in characterizing the convergence dynamics of (sub-)gradient descent algorithms. While the expected excess risk properties illustrated in Figure~\ref{fig:regularity-one} also hold for the objective function \eqref{eq:loss-batch}, the sub-gradient of the objective function $f_t(\Bbeta)$, denoted by $\g_t\in \partial f_t(\Bbeta)$, behaves distinctly from that in Section~\ref{sec:onesample}. The three phases in the behavior of the expected sub-gradient norm are illustrated in Figure~\ref{fig:regularity-two}. Essentially, when $\Bbeta$ is sufficiently far from the ground truth, the expected squared norm of the sub-gradient, $\EE\|\g\|_2^2$, is bounded above by a constant that is independent of the distance $\|\Bbeta-\Bbeta^{\ast}\|_2$. When $\Bbeta$ is closer, but not too close, to $\Bbeta^{\ast}$, the expected norm is upper bounded by $O(\|\Bbeta-\Bbeta^{\ast}\|_2)$, and this bound remains independent of the batch size. Finally, when $\Bbeta$ is very close to the ground truth, the expected sub-gradient norm depends only on the batch size, dimensionality, and noise characteristics.

	\begin{figure}[htpb]
		\centering
		\includegraphics[width=0.8\textwidth]{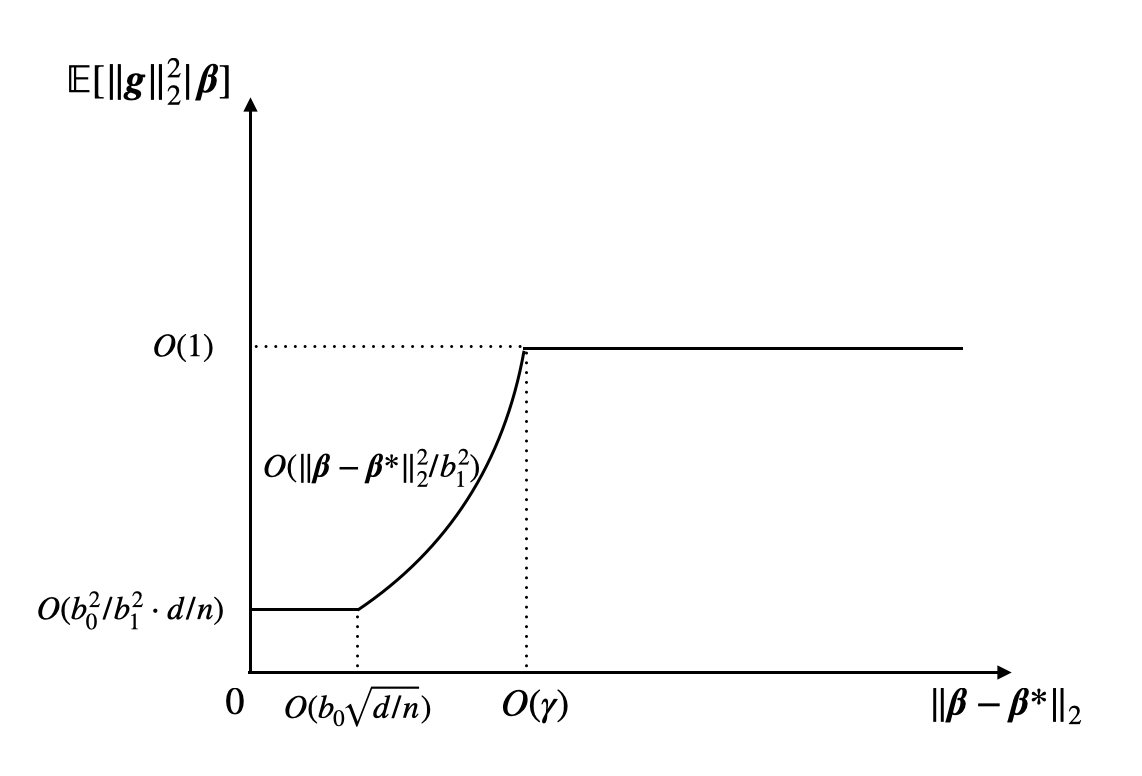}
		\caption{{ Expected length of the sub-gradient for the empirical quantile loss.} $Y$-axis: upper bound of $\EE[\ltwo{\g}^2|\Bbeta]$, where $\g\in\partial \frac{1}{n}\sum_{i=1}^{n} \rho_{Q,\tau}(Y_i-\X_i^{\top}\Bbeta)$; $X$-axis: $\|\Bbeta-\Bbeta^{\ast}\|_2$. It reveals three phases of properties associated with the sub-gradient, depending on the closeness between $\Bbeta$ and $\Bbeta^{\ast}$.}
		\label{fig:regularity-two}
	\end{figure}
	
	Hence, the online sub-gradient descent algorithm applied to \eqref{eq:loss-batch} exhibits a three-phase convergence behavior, provided that the stepsize is carefully chosen to align with the aforementioned statistical properties. For notational convenience, define $\bar \tau = \max\{ \tau, 1-\tau \}$.

	\begin{theorem}
		Suppose Assumptions~\ref{assump:sensing_operators:vec} and \ref{assump:heavy-tailed} hold. Let $\Bbeta_0$ be an arbitrary initialization satisfying $\ltwo{\Bbeta_0-\Bbeta^*}\leq D_0$ for some $D_0>0$. There exist universal constants $\{c_i\}_{i=0}^5,C_0,C_1,C_2>0$ such that if the batch size $n_t\geq n\geq C_0(\ku/\kl) \bar \tau^2 d$ for all $t$, then the sequence $\{\Bbeta_t\}_{t\geq 1}$ generated by the online sub-gradient descent algorithm exhibits the following dynamics:
		\begin{enumerate}[(1).]
			\item during phase one, where $\ltwo{\Bbeta_{t}-\Bbeta^*}\geq 8 \kl^{-1/2} \gamma$, by choosing a stepsize $\eta_0=C\sqrt{\kl}/\ku\cdot D_0$ and $\eta_t=(1-c\kl/\ku)\eta_{t-1}$ with any $D_0\geq\|\Bbeta_t-\Bbeta^*\|_2$, $C>1$, $c<0.1$, it holds with probability exceeding $1-\exp(-c_0d)-\exp\big(-\sqrt{n_t/\log(n_t)}\big)$ that
			$$
			\ltwo{\Bbeta_{t+1}-\Bbeta^*}\leq \bigg(1-c\frac{\kl}{\ku}\bigg)D_0;
			$$
			the conclusion of phase one is reached after $t_1=O(\log(\ltwo{\Bbeta_{0}-\Bbeta^*}/\gamma))$ iterations;

			\item in phase two, characterized by $C_1 C_u^{1/2} C_l^{-1} \bar \tau  \sqrt{ d/n}\cdot b_0\leq\ltwo{\Bbeta_{t}-\Bbeta^*}<8\kl^{-1/2}\gamma$, by selecting a constant stepsize $\eta_t=\eta\in(\kl/\ku^2)(b_1^2/b_0)\cdot[c_1,c_2]$, it holds with probability at least  $1-3\exp(-c_0d)-\exp(-\sqrt{n_t/\log n_t})$ that
			$$
			\ltwo{\Bbeta_{t+1}-\Bbeta^*}^2\leq\bigg(1-0.0005\frac{b_1^2}{b_0^2}\frac{\kl^2}{\ku^2}\bigg)\cdot\ltwo{\Bbeta_{t}-\Bbeta^*}^2;
			$$
			phase two requires $O\big(\log((n/d)(\gamma/b_0))\big)$ iterations to conclude, providing an estimate $\Bbeta_{t_2}$ that satisfies  $\ltwo{\Bbeta_{t_2}-\Bbeta^*}\asymp  C_u^{1/2} C_l^{-1}  \bar \tau \sqrt{ d/n}\cdot b_0$ under the specified conditions;
			
			\item in phase three, characterized by $\ltwo{\Bbeta_{t}-\Bbeta^*}\leq C_1 C_u^{1/2}C_l^{-1}  \bar \tau \sqrt{ d/n}\cdot b_0$, by choosing a stepsize $\eta_t:=  \frac{\ca}{\kl}\frac{b_0}{t+\cb - t_2}$ with arbitrary constants $\ca\geq 12$ and $\cb>\ca/12$, it holds with probability exceeding $1-c_2\exp(-c_1d)-\exp(-\sqrt{n_t/\log n_t})$ that
			$$\ltwo{\Bbeta_{t+1}-\Bbeta^*}^2\leq C^*\frac{\ku^2b_0^2}{\kl^2b_1^2} \frac{\bar \tau^2 }{t+\cb+1-t_2}\frac{d}{n}\cdot \frac{\ku}{\kl^2} b_0^2,$$
			where the constant $C^*$ depends solely on $\ca,\cb$, and $t_2$ denotes the conclusion time of the phase two convergence.
		\end{enumerate}
		\label{thm:Batch}
	\end{theorem}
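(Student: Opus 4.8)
The plan is to track the squared estimation error through the elementary sub-gradient recursion. Writing $\bd_t := \Bbeta_t - \Bbeta^*$ and $\g_t$ for the batch sub-gradient, the update $\Bbeta_{t+1}=\Bbeta_t-\eta_t\g_t$ gives
\[
\ltwo{\bd_{t+1}}^2 = \ltwo{\bd_t}^2 - 2\eta_t \inp{\g_t}{\bd_t} + \eta_t^2 \ltwo{\g_t}^2 .
\]
Since $\Bbeta_t$ is measurable with respect to $\calD_0,\dots,\calD_{t-1}$ while the fresh batch $\calD_t$ is independent of the past, I would analyze each step conditionally on $\Bbeta_t$, treating $\bd_t$ as fixed and the two data-dependent terms $\inp{\g_t}{\bd_t}$ and $\ltwo{\g_t}^2$ as averages of $n_t$ i.i.d.\ quantities. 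Everything then reduces to (i) a lower bound on the curvature term $\inp{\g_t}{\bd_t}$ and (ii) an upper bound on the sub-gradient energy $\ltwo{\g_t}^2$, after which the three phases follow by matching $\eta_t$ to the local geometry.

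The two structural inputs are the regularity properties of Figures~\ref{fig:regularity-one} and~\ref{fig:regularity-two}, which I would first establish at the population level and then lift to high probability. Writing $w:=\tau-\mathbb I\{\xi<\X^\top\bd_t\}$, a Stein-type identity for $\X\sim N(\mathbf 0,\bSigma)$ gives, in the homoscedastic case, $\EE[\g_t\mid\Bbeta_t]=\bSigma\bd_t\,\EE[h_{\xi|\X}(\X^\top\bd_t)]$ up to sign, with the general case controlled through the density bounds of Assumption~\ref{assump:heavy-tailed}; hence $\inp{\EE[\g_t\mid\Bbeta_t]}{\bd_t}=(\bd_t^\top\bSigma\bd_t)\,\EE[h_{\xi|\X}(\X^\top\bd_t)]$ is nonnegative and two-phase: when $\ltwo{\bd_t}\gtrsim\gamma$ the density is averaged over a wide range and the bound is first order, $\gtrsim\mu_1\ltwo{\bd_t}$ with $\mu_1\asymp(\kl/\sqrt{\ku})\gamma/b_0$; when $\ltwo{\bd_t}\lesssim\gamma$ the density is essentially $h_{\xi|\X}(0)\geq b_0^{-1}$ and restricted strong convexity emerges, $\gtrsim\mu_2\ltwo{\bd_t}^2$ with $\mu_2\asymp\kl/b_0$. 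A parallel computation $\EE[\ltwo{\g_t}^2\mid\Bbeta_t]=\tfrac{n_t-1}{n_t}\ltwo{\EE[\g_t\mid\Bbeta_t]}^2+\tfrac1{n_t}\EE[w^2\ltwo{\X}^2]$ produces the three-phase upper bound: a constant ceiling far away, a batch-size-free term of order $\ku^2\ltwo{\bd_t}^2/b_1^2$ at moderate range, and a variance floor $\asymp\bar\tau^2\ku d/n$ near the truth. Because $n_t\geq n\geq C_0(\ku/\kl)\bar\tau^2 d$, I would upgrade both conditional expectations to high-probability statements: the $\exp(-c_0 d)$ term comes from a covering/net argument over the direction $\bd_t/\ltwo{\bd_t}\in\mathbb S^{d-1}$ together with concentration of $\ltwo{\X_i}^2$, while the $\exp(-\sqrt{n_t/\log n_t})$ term comes from the empirical-process fluctuation of the non-smooth part $\frac1{n_t}\sum_i(\mathbb I\{\xi_i<\X_i^\top\bd_t\}-H_{\xi|\X}(\X_i^\top\bd_t))$, controlled by truncating $\max_i\ltwo{\X_i}$ and a Bernstein-type bound.

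With these bounds in hand, each phase follows by choosing $\eta_t$ to balance descent against energy. In phase one, combining $\inp{\g_t}{\bd_t}\gtrsim\mu_1\ltwo{\bd_t}$ with a constant ceiling $\ltwo{\g_t}^2\lesssim L_1$ gives $\ltwo{\bd_{t+1}}^2\leq\ltwo{\bd_t}^2-2\eta_t\mu_1\ltwo{\bd_t}+\eta_t^2 L_1$; taking $\eta_t\asymp\mu_1\ltwo{\bd_t}/L_1$ — which decays geometrically because $\ltwo{\bd_t}$ does — yields $\ltwo{\bd_{t+1}}\leq(1-c\,\kl/\ku)\ltwo{\bd_t}$, so $t_1=O(\log(\ltwo{\bd_0}/\gamma))$ steps reach $\ltwo{\bd_t}\asymp\kl^{-1/2}\gamma$. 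In phase two, restricted strong convexity $\inp{\g_t}{\bd_t}\gtrsim\mu_2\ltwo{\bd_t}^2$ and $\ltwo{\g_t}^2\lesssim L_2\ltwo{\bd_t}^2$ with $L_2\asymp\ku^2/b_1^2$ turn the recursion into $\ltwo{\bd_{t+1}}^2\leq(1-2\eta\mu_2+\eta^2 L_2)\ltwo{\bd_t}^2$; the constant choice $\eta\asymp\mu_2/L_2\asymp(\kl/\ku^2)(b_1^2/b_0)$ produces the stated contraction factor $1-0.0005\,(b_1^2/b_0^2)(\kl^2/\ku^2)$, persisting until the floor overtakes $L_2\ltwo{\bd_t}^2$, i.e.\ $\ltwo{\bd_t}\asymp\ku^{1/2}\kl^{-1}\bar\tau\sqrt{d/n}\,b_0$, after $O(\log((n/d)(\gamma/b_0)))$ iterations. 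In phase three, $\ltwo{\g_t}^2$ sits at the floor $\sigma_g^2\asymp\bar\tau^2\ku d/n$ while the curvature remains quadratic, giving the classical stochastic recursion $\ltwo{\bd_{t+1}}^2\leq(1-2\eta_t\mu_2)\ltwo{\bd_t}^2+\eta_t^2\sigma_g^2$; the choice $\eta_t\asymp(\ca/\kl)b_0/(t+\cb-t_2)$ makes $2\eta_t\mu_2\asymp 2\ca/(t+\cb-t_2)$, and with $\ca\geq12$ the standard $1/t$-stepsize induction yields $\ltwo{\bd_{t+1}}^2\lesssim\sigma_g^2\eta_t/\mu_2\asymp(d/n)/(t+1-t_2)$ up to the stated constants, i.e.\ the offline-optimal rate in the cumulative sample size $n(t-t_2)$.

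The main obstacle is the high-probability curvature lower bound in the non-smooth regime, especially the onset of restricted strong convexity: the indicator $\mathbb I\{\xi_i<\X_i^\top\bd\}$ is neither Lipschitz nor pointwise concentrated, so establishing $\inp{\g_t}{\bd_t}\gtrsim\mu_2\ltwo{\bd_t}^2$ uniformly over the relevant shell of $\bd_t$ with only first-moment control on $\xi$ requires a careful empirical-process / truncation argument, and is the source of the unusual $\exp(-\sqrt{n_t/\log n_t})$ rate. A secondary difficulty is pinning the exact $\sqrt{d/n}$ scale at the phase-two-to-phase-three boundary (equating $L_2\ltwo{\bd_t}^2$ with the floor $\sigma_g^2$) and verifying that, although the per-step error is not monotone, the high-probability bounds force $\{\Bbeta_t\}$ to remain in the current phase region so that the phase-specific stepsize stays valid until the next phase begins; I would handle this with a deterministic envelope on $\ltwo{\bd_t}$ dominating the random iterates, accumulating the two failure probabilities by a union bound across the $O(\log(\cdot))$ steps of each phase.
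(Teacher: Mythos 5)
Your overall skeleton (the squared-error recursion, phase-matched stepsizes, population-level two-phase curvature and three-phase sub-gradient energy) matches the paper's, but there are two places where your plan, as stated, does not close. First, the curvature lower bound: you propose to concentrate $\inp{\g_t}{\bd_t}$ directly, via a Stein-type computation of $\EE[\g_t\mid\Bbeta_t]$ plus a net over $\mathbb S^{d-1}$ and Bernstein bounds on the indicator fluctuations, and you yourself flag this as the main unresolved obstacle. The paper never concentrates the sub-gradient at all: it uses convexity, $\inp{\g_t}{\Bbeta_t-\Bbeta^*}\geq f_t(\Bbeta_t)-f_t(\Bbeta^*)$, and then only needs uniform concentration of \emph{loss differences} $f_t(\Bbeta_1)-f_t(\Bbeta_2)$ (Proposition~\ref{tecprop:empirical}), which are Lipschitz in the data; the population excess-risk lower bounds (Lemmas~\ref{teclem:gaussian_expecation} and~\ref{teclem:loss_expectation}) then supply the curvature. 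This also changes the phase-one constants: the paper's first-order slope is $\sqrt{\kl/2\pi}$ (minus an additive $\gamma$), obtained from the triangle inequality $\rho_{Q,\tau}(u+\xi)\geq\rho_{Q,\tau}(u)-\rho_{Q,\tau}(\xi)-\rho_{Q,\tau}(-\xi)$, whereas your density-based $\mu_1\asymp(\kl/\sqrt{\ku})\gamma/b_0$ degrades with $\gamma/b_0$ and would not reproduce the stated contraction factor $1-c\,\kl/\ku$.

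Second, and more seriously, your phase-three argument is a per-step recursion $\ltwo{\bd_{t+1}}^2\leq(1-2\eta_t\mu_2)\ltwo{\bd_t}^2+\eta_t^2\sigma_g^2$ assumed to hold with high probability at each step. But once $\ltwo{\bd_t}\lesssim \ku^{1/2}\kl^{-1}\bar\tau\sqrt{d/n}\,b_0$, the empirical-process fluctuation $C\bar\tau\sqrt{\ku d/n}\,\ltwo{\bd_t}$ is of the \emph{same order} as the quadratic signal $\frac{\kl}{12 b_0}\ltwo{\bd_t}^2$ (this is precisely the phase-two/phase-three boundary), so no per-step high-probability curvature bound of the form $\inp{\g_t}{\bd_t}\gtrsim\mu_2\ltwo{\bd_t}^2$ is available, and a triangle-inequality accumulation of the per-step deviations along the unrolled recursion yields only an $O(1/\sqrt{t-t_2})$ contribution, not $O(1/(t-t_2))$. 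The paper's proof keeps the conditionally mean-zero fluctuation $f_l(\Bbeta_l)-f_l(\Bbeta^*)-\EE[\,\cdot\mid\Bbeta_l]$ explicit in the unrolled recursion and bounds its product-weighted sum with a martingale (Azuma-type, \citealp{shamir2011variant}) inequality, exploiting square-root cancellation across iterations; this is the step that delivers the stated $d/(n(t+\cb+1-t_2))$ rate and the $\exp(-c_1 d)$ failure probability, and it is missing from your plan.
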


	According to Theorem~\ref{thm:Batch}, the online sub-gradient descent algorithm exhibits fast linear convergence during phases one and two. The stepsize schemes used in these phases mirror those observed in the two-phase convergence of offline quantile regression \citep{shen2023computationally}. Specifically, the first phase requires $t_1$ iterations and uses a total of $O(d\log(\ltwo{\Bbeta_{0}-\Bbeta^*}/\gamma))$ data vectors. This aligns with the data scale required in phase one of the online learning setting, as outlined in Theorem~\ref{thm:one_sample}. In the third phase, a stepsize of $O(1/t)$ is essential for attaining long-term statistical optimality. The final error rate exhibits a continual decrease as additional data becomes available, demonstrating the efficacy of the proposed stepsize scheme in seamlessly integrating sequentially arriving data with an unknown horizon. Furthermore, at each iteration, the failure probability diminishes exponentially with respect to the dimension.

	\begin{corollary} \label{cor:batch}
		Under the conditions of Theorem~\ref{thm:Batch}, and if the horizon $T\geq C_0(\log(\ltwo{\Bbeta_{0}-\Bbeta^*}/b_0) + \log(n/d))$, then,   with a probability exceeding $1-c_1T\exp(-c_0d)-T\exp(-\sqrt{n/\log n})$, the online sub-gradient descent algorithm produces a final estimate with an error rate given by
		$$
		\ltwo{\Bbeta_T-\Bbeta^*}^2\leq C^{\ast}\frac{\ku^2b_0^2}{\kl^2b_1^2} \frac{\ku \bar \tau }{\kl^2} \cdot \frac{d}{nT} \cdot b_0^2,
		$$
		where the constant $C^{\ast}$ depends only on $C_a$ and $C_b$, and $\bar \tau = \max\{ \tau, 1-\tau\}$.
	\end{corollary}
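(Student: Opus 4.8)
The plan is to read the result off the three-phase dynamics of \refthm{thm:Batch}: the imposed horizon condition forces the iterate to be deep inside phase three by time $T$, after which the phase-three bound applies almost verbatim. The only genuine work is an iteration-count bookkeeping followed by a union bound over the trajectory.

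First I would bound the number of iterations $t_2$ consumed by phases one and two. \refthm{thm:Batch} allots $t_1 = O(\log(\ltwo{\Bbeta_0-\Bbeta^*}/\gamma))$ iterations to phase one and a further $O(\log((n/d)(\gamma/b_0)))$ to phase two, so
$$
t_2 = O\Big(\log(\ltwo{\Bbeta_0-\Bbeta^*}/\gamma) + \log\big((n/d)(\gamma/b_0)\big)\Big) = O\Big(\log(\ltwo{\Bbeta_0-\Bbeta^*}/b_0) + \log(n/d)\Big),
$$
using $\log(\ltwo{\Bbeta_0-\Bbeta^*}/\gamma) + \log(\gamma/b_0) = \log(\ltwo{\Bbeta_0-\Bbeta^*}/b_0)$. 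Taking $C_0$ in the hypothesis $T \geq C_0(\log(\ltwo{\Bbeta_0-\Bbeta^*}/b_0)+\log(n/d))$ to be at least twice the implied constant then guarantees $t_2 \leq T/2$, so that at least half the budget is spent in phase three.

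Next I would invoke the phase-three inequality of \refthm{thm:Batch} at $t+1 = T$. Because $t_2 \leq T/2$, the denominator obeys $T + \cb + 1 - t_2 \geq T/2$, whence $1/(T+\cb+1-t_2) \leq 2/T$; and since $\bar\tau = \max\{\tau,1-\tau\} < 1$ we have $\bar\tau^2 \leq \bar\tau$. Substituting both estimates into the phase-three bound and absorbing the factor of two into $C^*$ produces precisely the claimed $d/(nT)$ rate. For the probability statement I would union-bound over the at most $T$ iterations: on every step the relevant phase dynamics of \refthm{thm:Batch} fail with probability at most $c_1\exp(-c_0 d) + \exp(-\sqrt{n/\log n})$ (the largest of the three per-phase failure probabilities), and intersecting the good events over $t = 0, \dots, T$ gives the stated $1 - c_1 T\exp(-c_0 d) - T\exp(-\sqrt{n/\log n})$ guarantee.

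The step demanding the most care is the composition across phases. The transition times $t_1$ and $t_2$ are themselves data-dependent random variables, so the deterministic chaining above must be carried out on the good event where all three phases of \refthm{thm:Batch} realize their stated contraction and decay simultaneously; it is exactly the complement of this intersection that the union bound controls. Provided that event occurs, the entry time into each phase is pinned down by the threshold crossings in \refthm{thm:Batch}, the counts $t_1,t_2$ obey the bounds above, and the final substitution goes through.
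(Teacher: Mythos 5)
Your proposal is correct and follows essentially the same route the paper intends: the corollary is stated as an immediate consequence of Theorem~\ref{thm:Batch}, obtained by bounding $t_2$ via the phase-one and phase-two iteration counts, using the horizon condition to ensure $T - t_2 \gtrsim T$ in the phase-three denominator, and union-bounding the per-iteration failure probabilities over the trajectory. Your bookkeeping of the logarithms ($\log(\ltwo{\Bbeta_0-\Bbeta^*}/\gamma)+\log(\gamma/b_0)=\log(\ltwo{\Bbeta_0-\Bbeta^*}/b_0)$) and the observation $\bar\tau^2\leq\bar\tau$ match the constants in the stated bound.
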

	
	If all batch sizes satisfy $n_t\asymp n$, then the total number of data points used is $O(Tn)$. In this setting, the error rate established in Corollary~\ref{cor:batch} matches the minimax-optimal rate in the offline setting \citep{he2021smoothed}. Notably, this rate is robust to initialization errors, similar to the rate obtained for online learning in Corollary~\ref{cor:one_sample}. When the initialization is sufficiently close to $\Bbeta^{\ast}$, a trade-off emerges between short-term accuracy and long-term optimality, as discussed in Remark~\ref{rmk:one_sample_init}.

	\begin{theorem}
		Under the same conditions and stepsize scheme as in Theorem~\ref{thm:Batch}, the online sub-gradient descent algorithm attains a regret upper bound that for any $T\geq 1$ with probability over $1-c_1t_1\exp(-c_0d)-t_1\exp(-\sqrt{n/\log n})$,
		\begin{align*}
			\textsf{Regret}_T\leq C_1\frac{\ku^3}{\kl^3}\frac{b_0^2}{b_1^2}\max\{\sqrt{\ku}\ltwo{\Bbeta_{0}-\Bbeta^*},\gamma^2/b_0\}+C_2\frac{\ku^3}{\kl^3}\frac{b_0^3}{b_1^3}\frac{d}{n}b_0\log\left(\frac{T-t_1+\cb}{\cb} \right).
		\end{align*} 
		\label{thm:Batch regret}
	\end{theorem}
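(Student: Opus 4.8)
The plan is to exploit the independence of the fresh batch $\calD_t$ from the current iterate $\Bbeta_t$ so as to reduce the per-step regret to a \emph{population} excess risk. Writing $R(\Bbeta):=\EE\big[\rho_{Q,\tau}(\xi-\X^{\top}(\Bbeta-\Bbeta^*))-\rho_{Q,\tau}(\xi)\big]$ and conditioning on the $\sigma$-field generated by $\calD_0,\dots,\calD_{t-1}$, the tower property gives $\EE[f_t(\Bbeta_t)-f_t(\Bbeta^*)\mid\Bbeta_t]=R(\Bbeta_t)$, so that $\textsf{Regret}_T=\sum_{t=0}^T\EE[R(\Bbeta_t)]$. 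I would first record two global upper bounds on $R$. The Lipschitz property of the check loss (with constant $\bar\tau$) together with Jensen yields the linear bound $R(\Bbeta)\leq\bar\tau\sqrt{\ku}\,\ltwo{\Bbeta-\Bbeta^*}$, while the Knight identity combined with the global density bound $\sup_x h_{\xi|\X}(x)\leq b_1^{-1}$ of Assumption~\ref{assump:heavy-tailed}(b) gives the quadratic bound $R(\Bbeta)\leq\tfrac{\ku}{2b_1}\ltwo{\Bbeta-\Bbeta^*}^2$. The regret sum is then controlled by whichever of the two is tighter in each regime.

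For the two transient phases I would condition on the good event $\calE$ on which the dynamics of Theorem~\ref{thm:Batch} hold throughout phases one and two. This event is obtained by a union bound over the $O(t_1)$ iterations of these phases, producing exactly the factor $t_1$ in the advertised probability $1-c_1t_1\exp(-c_0d)-t_1\exp(-\sqrt{n/\log n})$. On $\calE$, phase one contributes $\sum_{t<t_1}\bar\tau\sqrt{\ku}\,\ltwo{\Bbeta_t-\Bbeta^*}$ with $\ltwo{\Bbeta_t-\Bbeta^*}\leq(1-c\kl/\ku)^t D_0$, and the geometric series sums to a $T$-independent quantity proportional to $\ltwo{\Bbeta_0-\Bbeta^*}$. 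Phase two, now invoking the quadratic bound, the contraction factor $1-0.0005\,b_1^2\kl^2/(b_0^2\ku^2)$, and the entry value $\ltwo{\Bbeta_{t_1}-\Bbeta^*}\asymp\kl^{-1/2}\gamma$, sums to a $T$-independent term proportional to $\gamma^2$. Taking the larger of the two reproduces the first, initialization-dependent term $\max\{\sqrt{\ku}\ltwo{\Bbeta_0-\Bbeta^*},\gamma^2/b_0\}$ (up to the common prefactor).

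The crux is phase three, and the decisive idea is to handle it \emph{in expectation} rather than by a further union bound; this is precisely what keeps the failure probability at $t_1$ rather than $T$ (contrast the $T$-dependent probability in Corollary~\ref{cor:batch}). Starting from $\Bbeta_{t_2}$, which lies in the phase-three region on $\calE$, I would use the one-step descent identity, take conditional expectation over the fresh batch, and invoke (i) local strong convexity of the population risk, coming from the density \emph{lower} bound $b_0^{-1}$ of Assumption~\ref{assump:heavy-tailed}(a), i.e. $\langle\nabla R(\Bbeta_t),\Bbeta_t-\Bbeta^*\rangle\gtrsim(\kl/b_0)\ltwo{\Bbeta_t-\Bbeta^*}^2$, and (ii) the phase-three sub-gradient variance floor $\EE[\ltwo{\g_t}^2\mid\Bbeta_t]\lesssim\sigma_g^2$ with $\sigma_g^2\asymp\bar\tau^2\ku d/n$, read off from the regularity depicted in Figure~\ref{fig:regularity-two}. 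With $\eta_t=(\ca/\kl)\,b_0/(t+\cb-t_2)$ one obtains $2\eta_t\mu\asymp 2\ca/t$, so solving the recursion $a_{t+1}\leq(1-2\ca/t)a_t+\eta_t^2\sigma_g^2$ for $a_t=\EE[\ltwo{\Bbeta_t-\Bbeta^*}^2\,\mathbb{I}_\calE]$ gives $a_t\lesssim(d/n)/t$ (the requirement $\ca\geq12$ ensuring the drift dominates). Multiplying by $\ku/(2b_1)$ and summing $\sum_{t=t_2}^T 1/(t+\cb-t_2)$ then produces the second, logarithmically growing term $\tfrac{d}{n}\log\!\big(\tfrac{T-t_1+\cb}{\cb}\big)$.

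I expect the main obstacle to be this phase-three expectation recursion: pinning down the local strong-convexity constant $\mu\asymp\kl/b_0$ requires a truncation argument, since the density lower bound only holds on $|x|\leq 8(\ku/\kl)^{1/2}\gamma$, so one must show that $|\X^{\top}(\Bbeta_t-\Bbeta^*)|$ stays in this range with overwhelming probability once $\ltwo{\Bbeta_t-\Bbeta^*}$ is small, and then verify that $2\eta_t\mu$ dominates the $1/t$ decay with a constant large enough ($\ca\geq12$) to close the induction. A secondary but necessary step is to bound the contribution of $\calE^c$ to each $\EE[R(\Bbeta_t)]$: since the check-loss sub-gradient obeys $\ltwo{\g_t}\leq\bar\tau\,n_t^{-1}\sum_i\ltwo{\X_i^{(t)}}$, the worst-case growth of $\ltwo{\Bbeta_t-\Bbeta^*}$ is at most linear in $t$, and multiplying this polynomial growth by the exponentially small $\Pr(\calE^c)$ shows the bad-event contribution is negligible, so the bound established on $\calE$ transfers to $\textsf{Regret}_T$.
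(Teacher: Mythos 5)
Your proposal follows essentially the same route as the paper: reduce each per-step regret to the population excess risk via the tower property, bound it linearly (Lipschitz) in phase one and quadratically (via the density upper bound $b_1^{-1}$, i.e.\ Lemma~\ref{teclem:loss_expectation}) afterwards, and control the phase-three error \emph{in expectation} through the recursion driven by the $O(d/n)$ sub-gradient second moment so that the failure probability scales with $t_1$ rather than $T$ — this is exactly the content of Lemma~\ref{lem:regret_expectation_batch} and the subsequent summation. Your explicit decomposition $\EE[\cdot\,\mathbb{I}_{\calE}]+\EE[\cdot\,\mathbb{I}_{\calE^c}]$ and the truncation discussion for the local strong-convexity constant are refinements of steps the paper treats more loosely, not a different argument.
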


	The regret upper bound consists of two terms: one that depends on the initialization error but is independent of $T$, and another that grows logarithmically with $T$ yet remains independent of the initialization. When the total number of data points consumed is equal--denoted by $N$, and assuming $N\geq C_1\max\{ n\log(\ltwo{\Bbeta_{0}-\Bbeta^* }/b_0), n\log(n/d)\}$--both online and batch learning achieve an optimal estimation error rate of $O(b_0^2\,d/N)$. However, batch learning incurs significantly lower regret by leveraging a larger number of data points in each iteration, thereby accelerating the learning process. Specifically, as shown in Theorem~\ref{thm:one_sample_regret}, online learning achieves a regret upper bound of $O(d\ltwo{\Bbeta_{0}-\Bbeta^*}+d\log(N/d))$, whereas batch learning attains a considerably smaller bound of $O(\ltwo{\Bbeta_{0}-\Bbeta^*}+(d/n)\log(N/n))$. It is worth noting, however, that batch learning requires substantially greater storage capacity.

	\subsection{Sequential Learning with Infinite Storage}
	
	Consider a scenario in which the server hosting $\Bbeta_{t+1}$ has unlimited storage capacity, retaining all historical data and updating the estimate only upon the arrival of new samples. Let $\Bbeta_0$ be an arbitrary initialization. At each time $t$, the server incorporates all observations received up to and including time $t$ when computing the update. Let $\{(Y_i^{(t)},\X_i^{(t)})\}_{i=1}^{n_t}$ denote the $n_t$ observations that arrive at time $t$. The loss function at time $t$ is then given by
	\begin{align}\label{eq:infinite-loss}
		f_t(\Bbeta):=\frac{1}{\sum_{l=0}^t n_l}\sum_{l=0}^{t}\sum_{i=1}^{n_l}\rho_{Q,\tau}(Y_i^{(l)}-\X_i^{(l)\top}\Bbeta),
	\end{align}
	which corresponds to the empirical risk over all accumulated data. The online sub-gradient descent algorithm updates the iterate via $\Bbeta_{t+1}=\Bbeta_{t}-\eta_t\cdot \g_t$, where $\eta_t$ is the stepsize and $\g_t\in\partial f_t(\Bbeta_t)$ is a sub-gradient. Specifically, we use the update rule
	$$
	\Bbeta_{t+1}=\Bbeta_{t}-\eta_t\cdot \frac{1}{\sum_{l=0}^tn_l}\sum_{l=0}^{n_l}\sum_{i=1}^{n_t}(\tau-\mathbb{I}\{Y_i^{(l)}-\langle\X_i^{(l)},\Bbeta_t\rangle\})\cdot \X_i^{(l)} .
	$$ 
	The stepsize scheme $\eta_t$ will be discussed in Theorem~\ref{thm:infite storage}. Although this algorithm requires iterating over the entire dataset accumulated so far at each update, it remains computationally more efficient than a full offline optimization approach \citep{shen2023computationally}.

	The objective function in (\ref{eq:infinite-loss}) retains its inherent two-phase structure, as illustrated in Figure~\ref{fig:regularity-one}. Notably, although this formulation incurs substantial storage and computational costs, these demands facilitate a faster convergence rate of the algorithm, as established in the following theorem.

	\begin{theorem}
		Suppose Assumptions~\ref{assump:sensing_operators:vec} and ~\ref{assump:heavy-tailed} hold, and let $\Bbeta_0$ be an arbitrary initialization satisfying   $\ltwo{\Bbeta_0-\Bbeta^*}\leq D_0$ for some $D_0>0$. There exist absolute positive constants $c_0,c_1,c_2,C_1,C_2$ such that, if the initial storage $n_0\geq C_1d$, then the sequence $\{\Bbeta_t\}_{t\geq 1}$ generated by the online sub-gradient descent algorithm has the following dynamics:
		\begin{enumerate}[(1).]
			\item During phase one, when $\ltwo{\Bbeta_{t}-\Bbeta^*}\geq 8\gamma$, by selecting a stepsize $\eta_t:= \frac{\sqrt{\kl}}{8\ku} \left(1-\frac{1}{100}\frac{\kl}{\ku}\right)^{t}\cdot D_0$, it holds with probability at least $1-\exp(-c_0d)-\exp(-\sqrt{\sum_{l=0}^{t} n_l/\log \sum_{l=0}^{t} n_l})$ that
			$$
			\ltwo{\Bbeta_{t+1}-\Bbeta^*}\leq\bigg(1-\frac{1}{100}\frac{\kl}{\ku}\bigg)^{t+1}\cdot D_0 .
			$$
			Phase one concludes after  $t_1=O\big(\log(\|\Bbeta_0-\Bbeta^{\ast}\|_2/\gamma)\big)$ iterations. 
			
			\item In phase two, when $\ltwo{\Bbeta_{t}-\Bbeta^*}< 8\gamma$, by selecting a stepsize $\eta_t:=\eta\asymp \frac{\kl}{\ku^2}\cdot\frac{b_1^2}{b_0}$, it holds with probability at least $1-\exp(-c_0d)-\exp(-\sqrt{\sum_{l=0}^{t} n_l/\log \sum_{l=0}^{t} n_l})$ that
			\begin{align*}
				\ltwo{\Bbeta_{t+1}-\Bbeta^*}^2\leq \bigg(1-c_1\frac{b_1^2}{b_0^2}\cdot\frac{\kl^2}{\ku^2}\bigg)\cdot\ltwo{\Bbeta_{t}-\Bbeta^*}^2 + c_2\frac{d}{\ku\sum_{l=0}^t n_l}\cdot b_1^2.
			\end{align*}
		\end{enumerate}
		\label{thm:infite storage}
	\end{theorem}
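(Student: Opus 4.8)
The plan is to run the standard one-step sub-gradient expansion
$$\ltwo{\Bbeta_{t+1}-\Bbeta^*}^2=\ltwo{\Bbeta_{t}-\Bbeta^*}^2-2\eta_t\langle\g_t,\Bbeta_t-\Bbeta^*\rangle+\eta_t^2\ltwo{\g_t}^2,$$
and to control the two data-dependent quantities on the right: a lower bound on $\langle\g_t,\Bbeta_t-\Bbeta^*\rangle$ and an upper bound on $\ltwo{\g_t}^2$. The essential difference from the batch setting of Theorem~\ref{thm:Batch} is that here $\g_t$ is evaluated on the \emph{pooled} sample that also produced $\Bbeta_t$, so $\Bbeta_t$ and the randomness in $\g_t$ are dependent. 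I would neutralize this by proving both regularity bounds \emph{uniformly} over $\Bbeta$ in the relevant region and then specializing to the random iterate $\Bbeta_t$. Writing $N_t:=\sum_{l=0}^t n_l$, the condition $n_0\geq C_1 d$ forces $N_t\geq C_1 d$ for every $t$, which is exactly what is needed to invoke the offline-type uniform concentration of \cite{shen2023computationally}; since the pooled loss \eqref{eq:infinite-loss} is an empirical risk over $N_t$ i.i.d.\ pairs, it inherits the two-phase excess-risk geometry of Figure~\ref{fig:regularity-one} and the sub-gradient bounds of Figure~\ref{fig:regularity-two}, now with effective sample size $N_t$. The two probability terms in the statement track the two ingredients of this control: the $\exp(-c_0 d)$ term comes from the Gaussian Gram-matrix/restricted-strong-convexity event, and the $\exp(-\sqrt{N_t/\log N_t})$ term from the empirical-process fluctuations of the non-smooth indicator part under only a bounded density.

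For phase one I would invoke convexity, $\langle\g_t,\Bbeta_t-\Bbeta^*\rangle\geq f_t(\Bbeta_t)-f_t(\Bbeta^*)$, together with the \emph{linear} lower bound on the excess risk valid on the annulus $\ltwo{\Bbeta_t-\Bbeta^*}\geq 8\gamma$, giving $\langle\g_t,\Bbeta_t-\Bbeta^*\rangle\gtrsim\sqrt{\kl}\,\ltwo{\Bbeta_t-\Bbeta^*}$, while the sub-gradient norm is uniformly bounded, $\ltwo{\g_t}^2\lesssim\ku$, because the scalar factor $\tau-\mathbb{I}\{\cdot\}$ is bounded and the Gram matrix concentrates. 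With the stepsize taken proportional to the current error bound, the linear drift $-2\eta_t\sqrt{\kl}\,\ltwo{\Bbeta_t-\Bbeta^*}$ dominates the quadratic overshoot $\eta_t^2\ku$; substituting $\eta_t=\frac{\sqrt{\kl}}{8\ku}(1-\frac{1}{100}\frac{\kl}{\ku})^t D_0$ an induction on $t$ then yields $\ltwo{\Bbeta_{t+1}-\Bbeta^*}\leq(1-\frac{1}{100}\frac{\kl}{\ku})^{t+1}D_0$, and contracting from $D_0$ down to the scale $\gamma$ costs $t_1=O(\log(\ltwo{\Bbeta_0-\Bbeta^*}/\gamma))$ iterations.

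For phase two, where $\ltwo{\Bbeta_t-\Bbeta^*}<8\gamma$, the local density lower bound of Assumption~\ref{assump:heavy-tailed}(a) activates the \emph{quadratic} regularity $\langle\g_t,\Bbeta_t-\Bbeta^*\rangle\gtrsim b_0^{-1}\kl\,\ltwo{\Bbeta_t-\Bbeta^*}^2$, while the sub-gradient splits into a signal and a score part, $\ltwo{\g_t}^2\lesssim b_1^{-2}\ku^2\ltwo{\Bbeta_t-\Bbeta^*}^2+\ku\,d/N_t$. Inserting these into the expansion with the constant stepsize $\eta\asymp\frac{\kl}{\ku^2}\frac{b_1^2}{b_0}$ — chosen to balance the curvature constant $b_0^{-1}\kl$ against the smoothness constant $b_1^{-2}\ku^2$ — gives a net contraction factor $1-c_1\frac{b_1^2}{b_0^2}\frac{\kl^2}{\ku^2}$ on $\ltwo{\Bbeta_t-\Bbeta^*}^2$, plus an additive variance floor of order $\eta^2\ku\,d/N_t$, which is dominated by the stated term $c_2\,d\,b_1^2/(\ku N_t)$ because $\kl\leq\ku$ and $b_1\leq b_0$. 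Here I would use the decomposition $\g_t(\Bbeta_t)=[\g_t(\Bbeta_t)-\g_t(\Bbeta^*)]+\g_t(\Bbeta^*)$, bounding the bracket by the uniform Lipschitz and curvature estimates and the score $\g_t(\Bbeta^*)$ in norm by $\sqrt{\ku d/N_t}$ via sub-Gaussian vector concentration (the scalar factor is bounded and $\X$ is Gaussian).

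The main obstacle is the uniform, high-probability regularity for the non-smooth pooled loss under data reuse and heavy-tailed noise. Two points require care: establishing the quadratic growth constant $\asymp b_0^{-1}\kl$ uniformly on the ball $\ltwo{\Bbeta-\Bbeta^*}<8\gamma$, which is exactly where the local density lower bound is guaranteed and where $N_t\gtrsim d$ makes the empirical curvature track its population counterpart; and controlling the fluctuations of the indicator-valued sub-gradient process — the source of the $\exp(-\sqrt{N_t/\log N_t})$ tail — with only a bounded density rather than sub-Gaussian noise. Once these uniform bounds are secured, the data dependence of $\Bbeta_t$ is immaterial, since the bounds hold simultaneously for every $\Bbeta$ in the region, and both recursions follow from the deterministic manipulations above.
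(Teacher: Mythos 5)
Your proposal is correct and follows essentially the same route as the paper's proof: the one-step expansion, a uniform-over-$\Bbeta$ concentration event (the paper's $\bcalE_t$, via Proposition~\ref{tecprop:empirical}) to neutralize the dependence between $\Bbeta_t$ and the pooled sub-gradient, the linear excess-risk lower bound with $\ltwo{\g_t}\leq\sqrt{\ku}$ and an induction for phase one, and the quadratic lower bound plus the split sub-gradient bound $\ltwo{\g_t}^2\lesssim b_1^{-2}\ku^2\ltwo{\Bbeta_t-\Bbeta^*}^2+\ku d/N_t$ with a constant stepsize for the contraction-plus-floor recursion in phase two. Your attribution of the two failure-probability terms and the role of $n_0\geq C_1 d$ also matches the paper exactly.
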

	
	As shown in Theorem~\ref{thm:infite storage}, the convergence behavior of the algorithm proceeds in two distinct phases. The initial phase requires a geometrically decaying sequence of stepsizes, while the second phase adopts a constant stepsize.  Both phases are proven to exhibit linear convergence. For simplicity, assume that $n_l=n$ holds for all $l\geq 1$. During the second phase (i.e., for $t > t_1$), the error rate satisfies
	\begin{align*}
		\ltwo{\Bbeta_{t+1}-\Bbeta^*}^2\leq \bigg(1-c_1\frac{b_1^2}{b_0^2}\cdot\frac{\kl^2}{\ku^2}\bigg)^{t+1-t_1}\cdot\ltwo{\Bbeta_{t_1}-\Bbeta^*}^2 +C_2 \frac{d}{n_0+tn}\cdot\frac{\ku}{\kl^2}\cdot b_0^2\cdot\bigg(\frac{b_0}{b_1}\cdot\frac{\ku}{\kl}\bigg)^2,
	\end{align*}
	which leads to the following corollary.

	\begin{corollary}
		Assume the same conditions as in Theorem~\ref{thm:infite storage}, and suppose that $n_l=n$ for all $l\geq 1$. If the total time horizon satisfies $T\geq C^*\ltwo{\Bbeta_{0}-\Bbeta^*}/\gamma$, for some constant $C^*$ that may depend on $(\kl,\ku,n_0, n)$, with probability over $1-\sum_{t=0}^{T}\exp(-\sqrt{ (n_0+tn)/\log  (n_0+tn)}) \\-T\exp(-c_0d)$, it holds that
		$$
		\ltwo{\Bbeta_{T}-\Bbeta^*}\leq C\left(\frac{b_0}{b_1}\cdot\frac{\ku}{\kl}\right)^2\cdot \frac{\ku}{\kl^2}\cdot \frac{d}{n_0+Tn} b_0^2.
		$$ 
	\end{corollary}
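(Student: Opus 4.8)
The plan is to combine the two-phase dynamics already established in Theorem~\ref{thm:infite storage} with a crossover argument showing that, once the horizon is large enough, the initialization-dependent transient is dominated by the statistical noise floor. First I would invoke phase one: starting from $\Bbeta_0$ with $\ltwo{\Bbeta_0-\Bbeta^*}\leq D_0$, the geometrically decaying stepsize drives the iterate into the phase-two region $\ltwo{\Bbeta_t-\Bbeta^*}<8\gamma$ after $t_1=O(\log(\ltwo{\Bbeta_0-\Bbeta^*}/\gamma))$ steps, with $\ltwo{\Bbeta_{t_1}-\Bbeta^*}\leq 8\gamma$. Since the hypothesis $T\geq C^*\ltwo{\Bbeta_0-\Bbeta^*}/\gamma$ dominates $t_1$ (as $x\geq\log x$), all but a logarithmic prefix of the iterations take place in phase two, so it suffices to control the phase-two recursion run for $\approx T$ steps.

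Next I would unroll the phase-two recursion. Writing $a_t:=\ltwo{\Bbeta_t-\Bbeta^*}^2$ and $\rho:=1-c_1\frac{b_1^2}{b_0^2}\frac{\kl^2}{\ku^2}\in(0,1)$, and using $\sum_{l=0}^t n_l=n_0+tn$, the phase-two bound of Theorem~\ref{thm:infite storage} reads $a_{t+1}\leq\rho\,a_t+c_2\frac{d\,b_1^2}{\ku(n_0+tn)}$. Telescoping from $t_1$ gives $a_{t+1}\leq\rho^{t+1-t_1}a_{t_1}+c_2\frac{d\,b_1^2}{\ku}\sum_{s=t_1}^{t}\rho^{t-s}\frac{1}{n_0+sn}$. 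The essential estimate is the weighted harmonic sum: because $1/(n_0+sn)$ is decreasing while the weights $\rho^{t-s}$ concentrate near $s=t$, I would split the sum into a recent window $s\in[t-m,t]$, on which $n_0+sn\asymp n_0+tn$ for $m\lesssim(n_0+tn)/n$, and a remote tail, on which $\rho^{t-s}$ decays geometrically; this yields $\sum_{s=t_1}^{t}\rho^{t-s}/(n_0+sn)\leq \frac{C}{(1-\rho)(n_0+tn)}$. After absorbing $1/(1-\rho)\asymp\frac{b_0^2}{b_1^2}\frac{\ku^2}{\kl^2}$ into the constants, this reproduces exactly the noise-floor term displayed immediately before the corollary.

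Then I would carry out the crossover at $t=T$. Using $a_{t_1}\leq 64\gamma^2$, the transient term is $\rho^{T-t_1}\cdot 64\gamma^2$, which decays geometrically in $T$, whereas the noise floor decays only like $1/(n_0+Tn)$. The transient is therefore dominated once $(T-t_1)\log(1/\rho)\gtrsim\log\big(\gamma^2(n_0+Tn)/(d\,b_0^2)\big)$, and the hypothesis $T\geq C^*\ltwo{\Bbeta_0-\Bbeta^*}/\gamma$ (with $C^*$ absorbing the dependence on $\kl,\ku,n_0,n$) is a comfortable sufficient condition, so the transient folds into the constant and the stated rate for $\ltwo{\Bbeta_T-\Bbeta^*}^2$ follows. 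For the high-probability statement I would take a union over the per-iteration failure events of Theorem~\ref{thm:infite storage}: each of the $t=0,\dots,T$ steps contributes $\exp(-c_0d)$ together with $\exp\big(-\sqrt{(n_0+tn)/\log(n_0+tn)}\big)$, giving precisely $1-\sum_{t=0}^{T}\exp\big(-\sqrt{(n_0+tn)/\log(n_0+tn)}\big)-T\exp(-c_0d)$.

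The step I expect to be the main obstacle is the weighted harmonic sum estimate: obtaining the sharp $1/(n_0+tn)$ scaling, rather than the loose $1/(n_0+t_1 n)$ one gets from naively pulling out the largest summand, requires balancing the window length $m$ against the geometric decay so that the remote tail is genuinely lower order and the recent window sees an essentially constant $1/(n_0+sn)$. Everything else is bookkeeping with the constants supplied by Theorem~\ref{thm:infite storage} and the elementary crossover comparison.
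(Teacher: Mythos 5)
Your proposal is correct and follows essentially the same route as the paper: the paper also telescopes the phase-two recursion of Theorem~\ref{thm:infite storage} to get a transient term $\rho^{t+1-t_1}\ltwo{\Bbeta_{t_1}-\Bbeta^*}^2$ plus the weighted harmonic sum $\sum_{s=t_1}^{t}\rho^{t-s}/(n_0+sn)$, and bounds the latter by exactly the recent-window/remote-tail split you describe (packaged as Lemma~\ref{teclem:sequence}, which splits the sum at the halfway index and yields the uniform bound $\frac{9}{(1-a)|\log a|}\cdot\frac{1}{n_0+tn}$, the source of the $(b_0\ku/(b_1\kl))^2$ prefactor). The crossover comparison against the geometrically decaying transient and the union bound over per-iteration failure events are likewise identical to the paper's argument.
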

	
	The corollary above demonstrates that, despite the server updating only once upon the arrival of each new batch of samples, the procedure consistently achieves statistical optimality, even when the total time horizon is unknown.

	\section{Numerical Experiments}
	
	In this section, we present experiments to empirically validate our theoretical findings. We first generate synthetic data for simulation studies, followed by an application of our proposed methods to a real-data example.
	
	\subsection{Simulations}
	
	\label{sec:simulation}
	
	In this section, we numerically examine the performance of the proposed online QR algorithms. We first demonstrate the effectiveness of our stepsize scheme, and then compare the final accuracy of different online estimators with that of the offline estimate. We further illustrate the advantage of online quantile regression over online least squares regression. It is worth noting that existing online quantile algorithms \citep{jiang2022renewable,sun2023online,chen2024renewable} depend on storage of $O(d^2)$ summary statistics. Additionally, they require data to arrive in batches with a diverging batch size. Their requirements limit their applicability for empirical performance comparisons. We use the relative error $\ltwo{\Bbeta_t-\Bbeta^*}/\ltwo{\Bbeta^*}$ as the main metric. For simplicity, the initialization $\Bbeta_0$ is set to be $\boldsymbol{0}$ throughout our numerical studies. 
	
	\medskip
	\noindent
	{\sc Stepsize Scheme}. We begin by demonstrating the effectiveness of our stepsize scheme, as outlined in Theorem~\ref{thm:one_sample}, which is theoretically guided by statistical regularities. The selection of parameters in the proposed stepsize is quite flexible, both theoretically (as outlined in Theorem~\ref{thm:one_sample}) and in practice. For instance, concerning the parameter $C_a$ required by the stepsize schedule in the second phase of iterations, we observe that our algorithm always performs well as long as it is not excessively small.

	In all the experiments, the stepsize for the first phase is scheduled as $\eta_t=(1-0.5/d)^t\eta_0$, where $\eta_0$ is the initial stepsize. The choice of stepsize in second phase involves specific parameters where we set $\ca = 20$ and $\cb=30$. We fix the dimension at $d=100$, the unknown horizon  $T=10^5$, and sample $t_{\nu}$-distributed noise with a degree of freedom $\nu=1.1$. The performance of proposed stepsize scheme is compared with those of two existing alternative stepsize scheme: the $\eta_t=O(1/t)$ decaying scheme and a constant stepsize scheme $\eta_t\equiv \emph{const}$. For detailed discussions on these stepsize schemes, we refer to \cite{duchi2009efficient} and \cite{zhang2004solving}.

	The convergence performances of online sub-gradient descent under the aforementioned three stepsize schemes are presented in Figure~\ref{fig:stepsize}. For both the moderate and strong SNR cases, our proposed stepsize scheme can ensure a linear convergence of the online sub-gradient descent algorithm in the first phase. The linear convergence behavior stops once the algorithm reaches at a sufficiently accurate estimate, i.e., when the error rate is dominated by the noise scale $\EE |\xi|$. Our proposed  scheme then resets the stepsize and the algorithm enters second phase of iterations. The error rate continues to decrease in the second phase, which is sub-linear and exhibits an $O(1/t)$ convergence rate. The two-phase convergence phenomenon is consistent with the theoretical discoveries. In contrast, the constant stepsize scheme can also achieve an error rate the same as the one our algorithm achieves at the end of first phase iterations. However, it cannot further improve the estimate or may converge too slowly resulting into a statistically sub-optimal estimate. It is also observed that employing a relatively large constant stepsize can facilitate faster convergence in the initial stage, which was claimed by the theoretical results in \cite{cai2023online}. The performance under the $O(1/t)$ stepsize scheme is considerably inferior to those under the other two stepsize schemes.

	\begin{figure}
		\centering
		\begin{subfigure}[b]{0.45\textwidth}
			\centering
			\includegraphics[width=\textwidth]{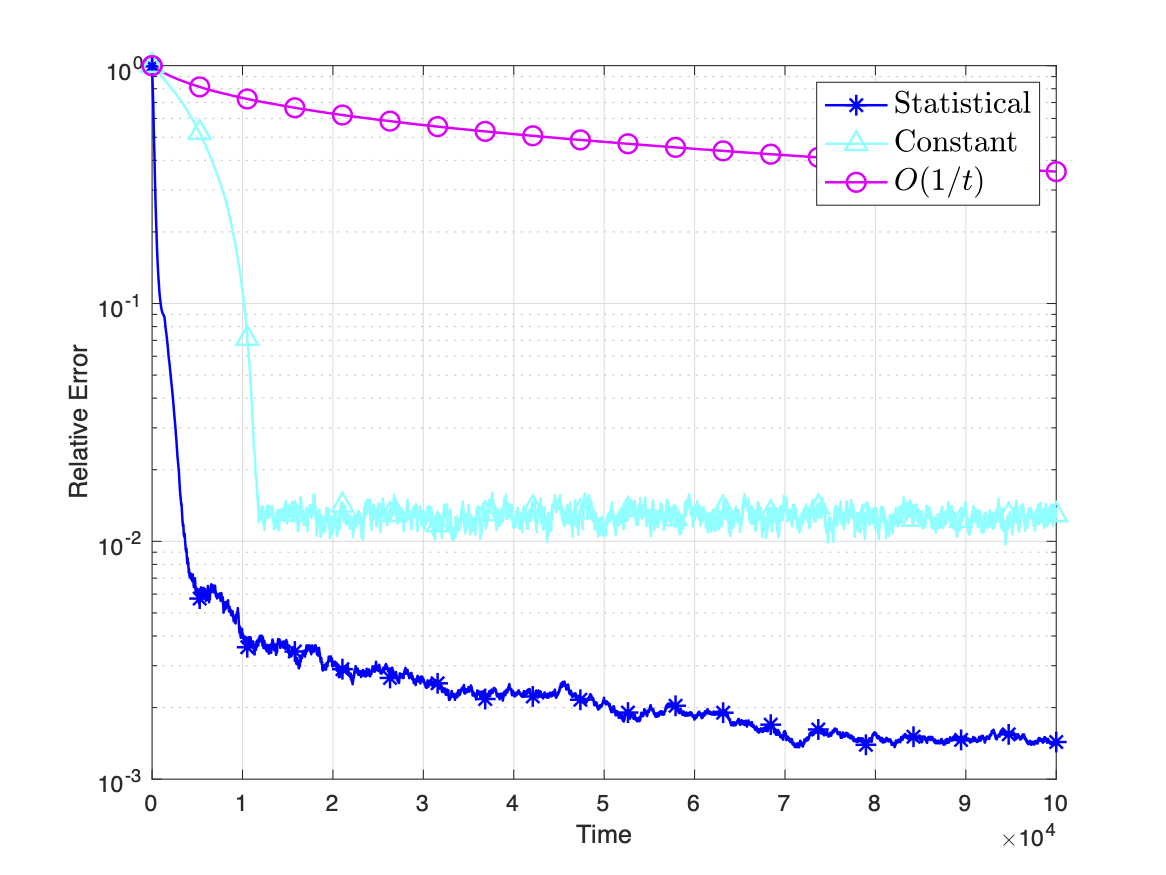}
			\caption{$\frac{\ltwo{\Bbeta^*}}{\EE|\xi|}=20$}
			\label{fig:step-SNR20}
		\end{subfigure}
		\hfill
		\begin{subfigure}[b]{0.45\textwidth}
			\centering
			\includegraphics[width=\textwidth]{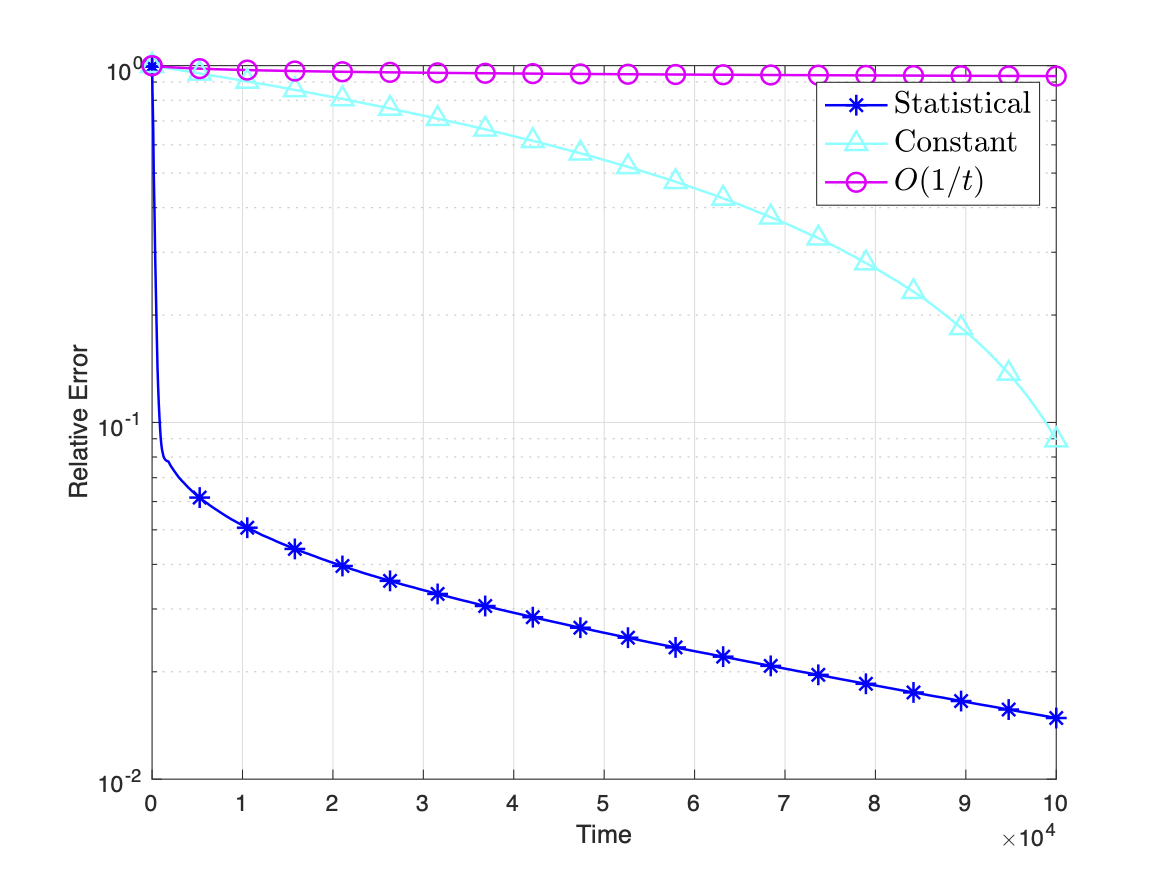}
			\caption{$\frac{\ltwo{\Bbeta^*}}{\EE|\xi|}=200$}
			\label{fig:step-SNR200}
		\end{subfigure}
		\caption{Relative error \emph{versus} time/iterations in online (one-sample) learning ($n_t\equiv 1$). The dimension $d=100$, unknown horizon $T=10^5$, and quantile loss parameter $\tau=1/2$. The convergence performances of online sub-gradient descent are examined under three stepsize schemes: \emph{Statistical} stands for our stepsize scheme guided by Theorem~\ref{thm:one_sample}, \emph{Constant} stands for the stepsize scheme $\eta_t\equiv \emph{const}$ \citep{zhang2004solving}, $O(1/t)$ means the decaying stepsize scheme $\eta_t=O(1/t)$ \citep{duchi2009efficient}. Left(a): moderate SNR; right(b): strong SNR. 
		}
		\label{fig:stepsize}
	\end{figure}
	
	\medskip
	\noindent 
	{\sc Statistical Accuracy Comparisons}. We now evaluate the statistical accuracy of the final estimator output by the online sub-gradient descent algorithm. The error rate achieved by offline quantile regression is used as the benchmark. The dimension $d$ and noise distribution are set the same as those in the previous set of simulations. Both the online one-sample learning and batch learning are studied in the experiment. The total sample size is set at $n=20,000$. Figure~\ref{fig:accSample20000} displays a box plot of error rates based on 50 replications. For each simulation, the online batch learning algorithm and one-sample learning algorithm take approximately 2 and 10 seconds, respectively, on a MacBook Pro 2020. The offline learning, implemented using the  \textsf{quantreg} package \citep{portnoy1997gaussian}, takes more than $2$ minutes. Figure~\ref{fig:accSample20000} shows that offline learning achieves the best statistical accuracy when the total sample size is relatively small, while online one-sample learning and batch learning achieve comparable statistical accuracy.  
	These are consistent with our theoretical findings. Simulation results for the large sample size $n=50,000$ are displayed in Figure~\ref{fig:accSample50000}, in which case the offline learning achieves only slightly better accuracy than its online counterparts. However, online algorithms enjoy much higher computational efficiencies. On the same Mac Pro, the online one-sample, batch, and offline learning methods take approximately 30, 5, and 520 seconds, respectively.

	\begin{figure}
		\centering
		\begin{subfigure}[b]{0.45\textwidth}
			\centering
			\includegraphics[width=\textwidth]{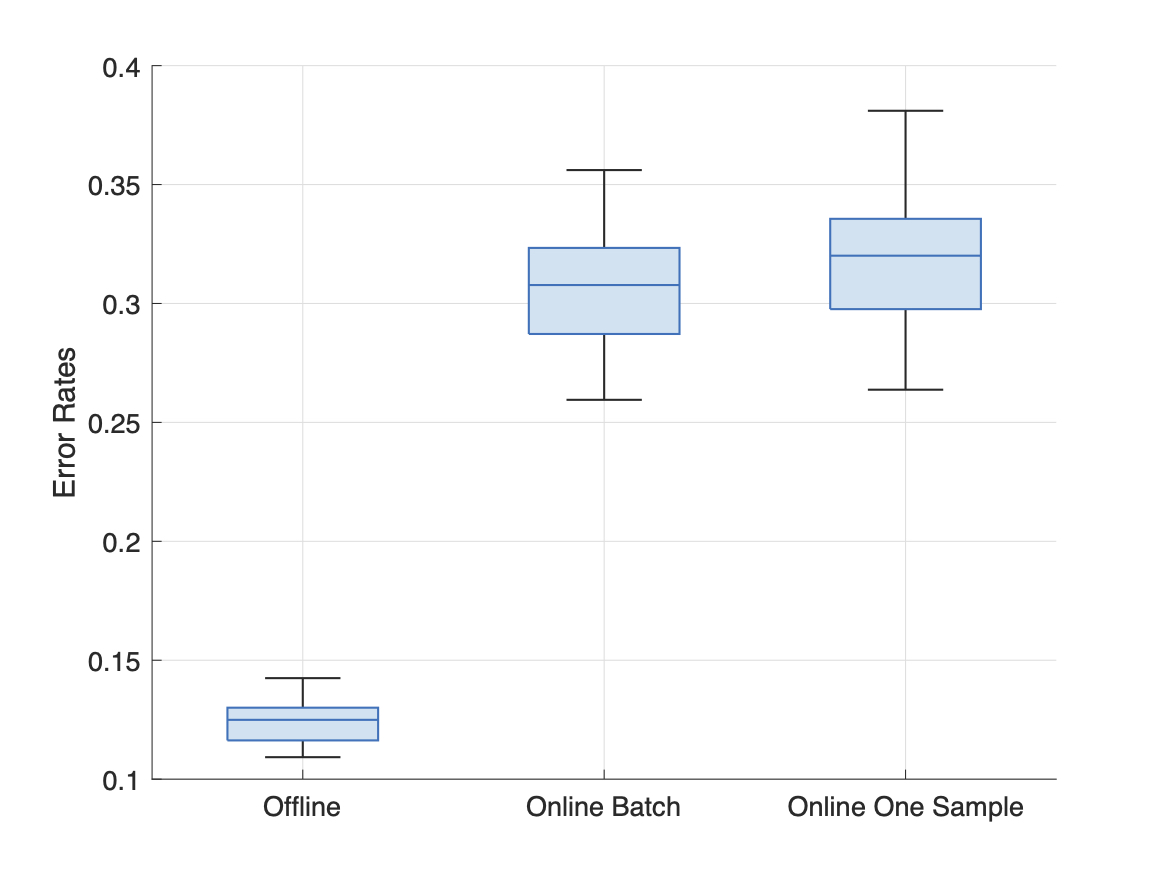}
			\caption{$\tau=0.25$}
			\label{fig:acc_sample20000tau025}
		\end{subfigure}
		\hfill
		\begin{subfigure}[b]{0.45\textwidth}
			\centering
			\includegraphics[width=\textwidth]{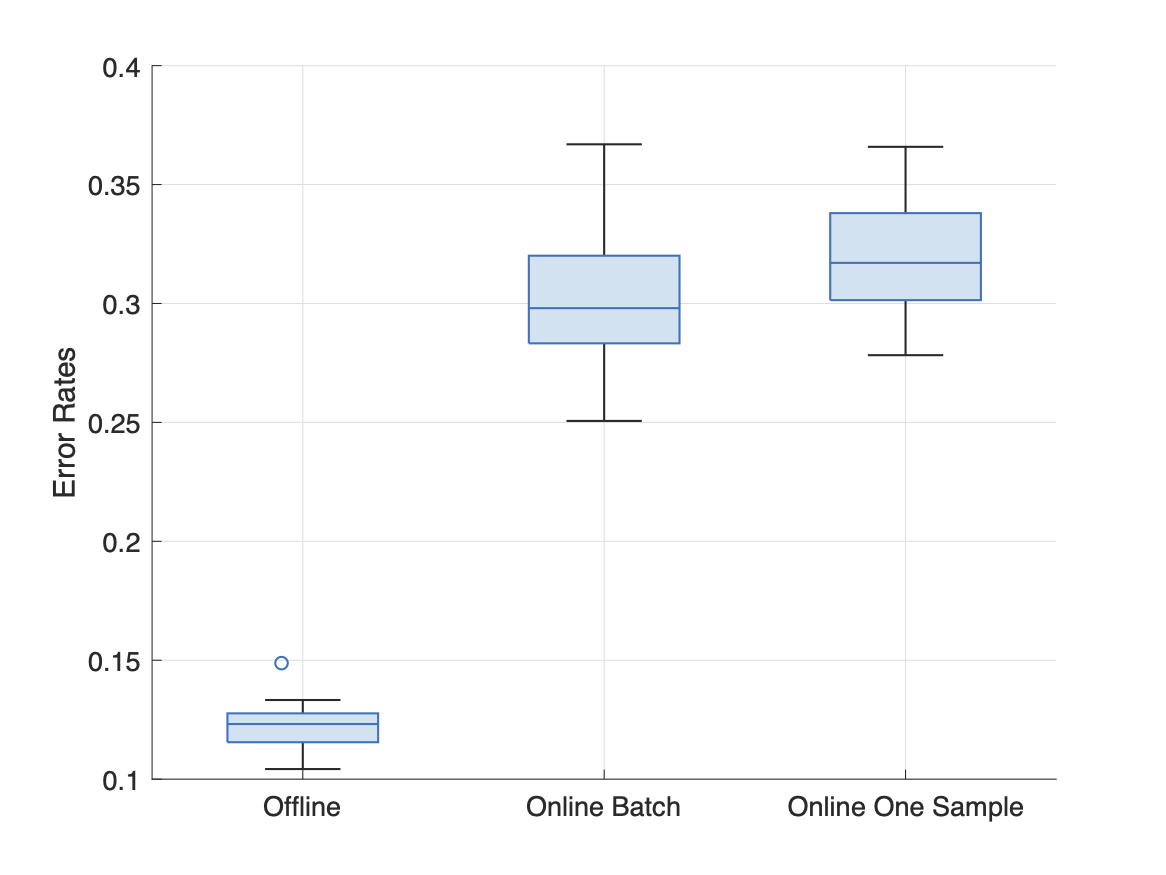}
			\caption{$\tau=0.75$}
			\label{fig:acc_sample20000tau075}
		\end{subfigure}
		\caption{Error rates of offline and online regression using quantile loss $\rho_{Q,\tau}(\cdot)$. Online One Sample refers to the online learning algorithm studied in Section~\ref{sec:onesample}. The dimension $d=100$, total sample size $n=20,000$, the batch size $n_t\equiv 100$, and noise has a $t_{1.1}$ distribution. Box-plots are drawn based on 50 independent simulations.}
		\label{fig:accSample20000}
	\end{figure}
	
	\begin{figure}
		\centering
		\begin{subfigure}[b]{0.45\textwidth}
			\centering
			\includegraphics[width=\textwidth]{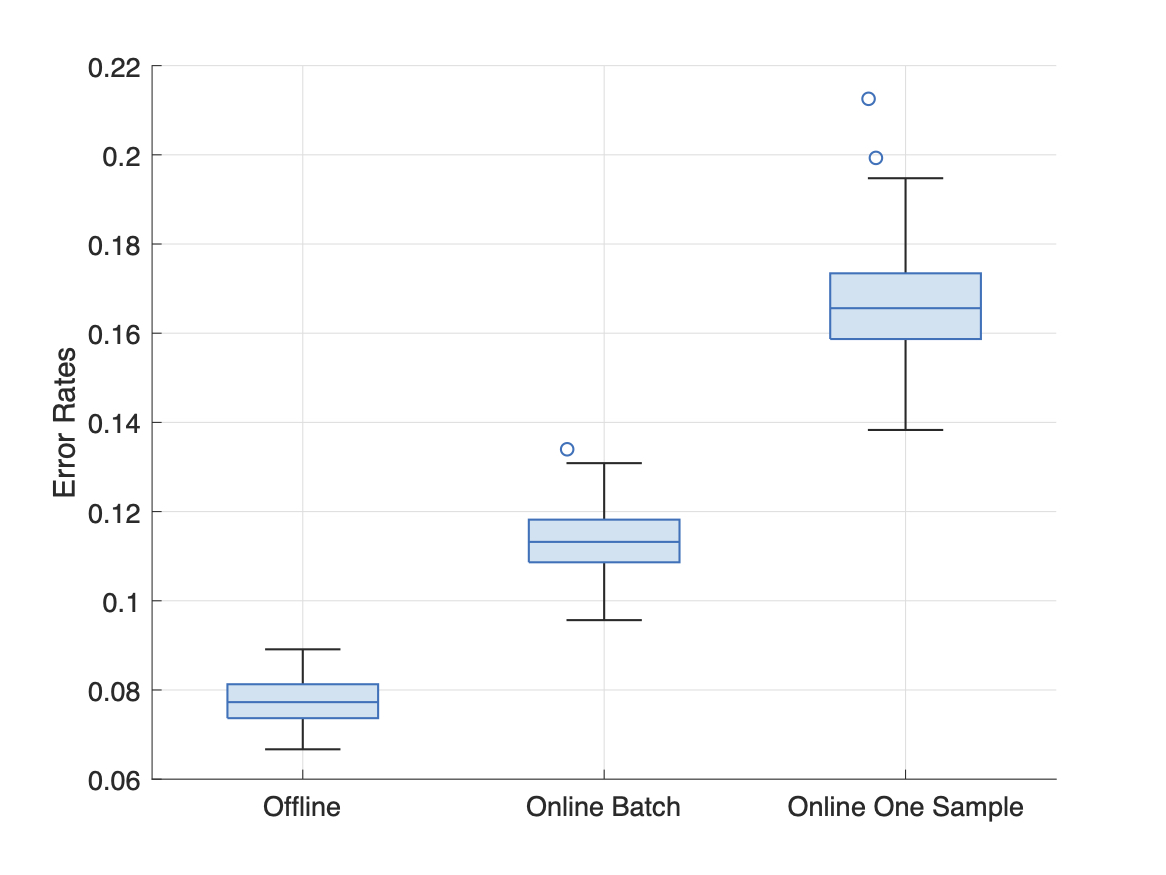}
			\caption{$\tau=0.25$}
			\label{fig:acc_sample50000tau025}
		\end{subfigure}
		\hfill
		\begin{subfigure}[b]{0.45\textwidth}
			\centering
			\includegraphics[width=\textwidth]{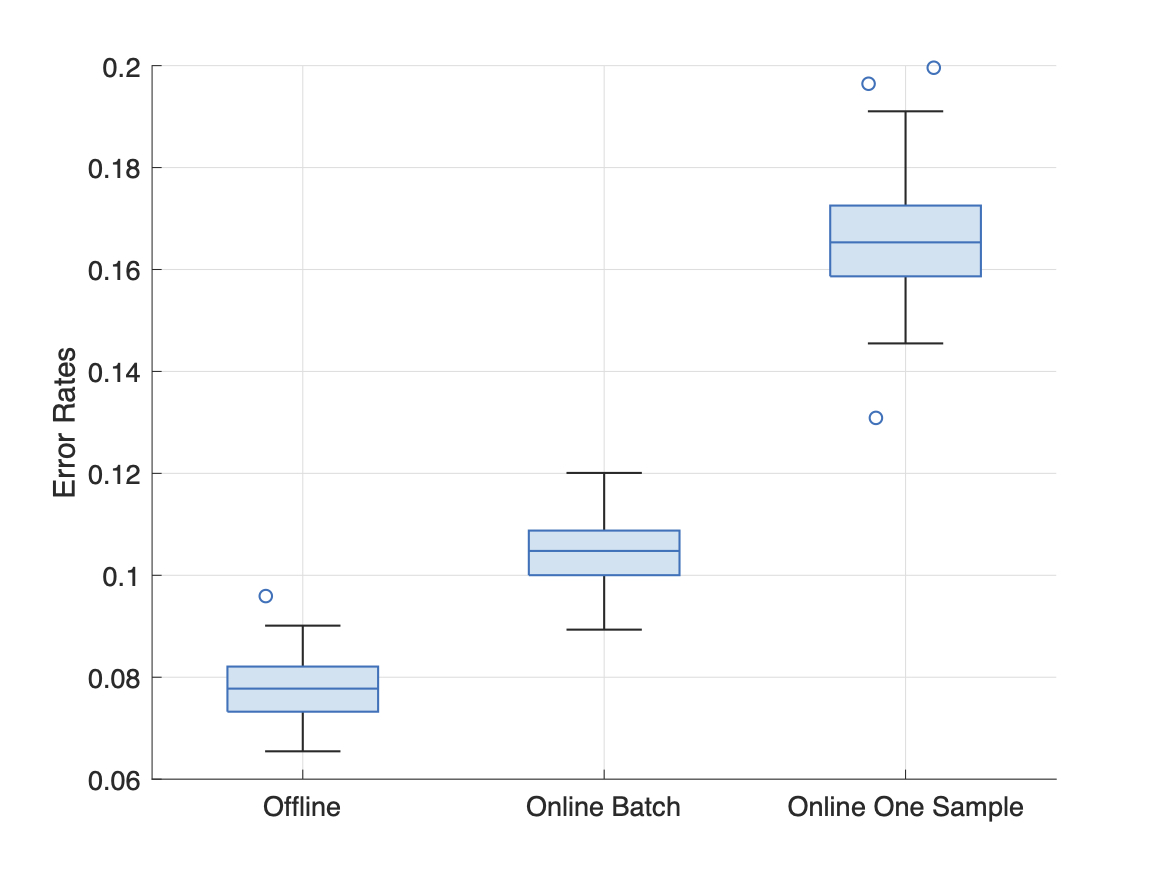}
			\caption{$\tau=0.75$}
			\label{fig:acc_sample50000tau075}
		\end{subfigure}
		\caption{Error rates of offline and online regression using quantile loss $\rho_{Q,\tau}(\cdot)$. Online One Sample refers to the online learning algorithm studied in Section~\ref{sec:onesample}. The dimension $d=100$, total sample size $n=50,000$, the batch size $n_t\equiv 200$, and noise has a $t_{1.1}$ distribution. Box-plots are drawn based on 50 independent simulations.}
		\label{fig:accSample50000}
	\end{figure}
	
	\medskip
	\noindent
	{\sc Convergence Dynamics Comparisons}. While online QR is motivated primarily for treating heavy-tailed noise, it is still of interest to examine its performance under Gaussian noise. Towards that end, we compare the performance of our proposed online QR algorithms with that of classical online least squares algorithm \citep{zhang2004solving}. We will show that the proposed online QR algorithms are not only robust in the presence of heavy-tailed noise or responses, but also are as efficient as the classical online least squares algorithm if the noise is Gaussian.

	While online learning with square loss has been extensively studied \citep{orabona2019modern,hazan2016introduction}, its stepsize scheme guided by statistical properties remains relatively under-explored. Here we briefly explain the appropriate stepsize scheme to achieve long-term statistical optimality for online least squares algorithm with a focus on sub-Gaussian noise and undetermined horizon.  Considering the square loss function at time $t$ as $f_t(\Bbeta):=(Y_t-\X_t^{\top}\Bbeta)^2$, the expected length of gradient $\g_t$ satisfies
	\begin{align*}
		\EE\left[\|\g_t\|_2^2\big|\Bbeta_t\right]\leq 4d\ku(\EE\xi^2+\ku\ltwo{\Bbeta_t-\Bbeta^*}^2).
	\end{align*}
	The gradient length is decided primarily by $\ltwo{\Bbeta_t-\Bbeta^* }^2$ when it is large. Conversely, when $\Bbeta_t$ is sufficiently close to the oracle, the gradient length is determined by $\EE\xi^2$. This implies an appropriate stepsize scheme for online least squares algorithm should also consists of two phases: a constant stepsize $\eta_t\equiv \eta=O(d^{-1})$ in the first phase; a decaying stepsize schedule $\eta_t=O\big((t+d)^{-1}\big)$ in the second phase. The detailed proof of the convergence dynamics under this stepsize scheme is almost identical to that of Theorem~\ref{thm:one_sample} and thus omitted.
	
	We compare the convergence dynamics of online one-sample QR algorithm and online least squares algorithm (equipped with the aforementioned two-phase stepsize scheme). The dimension $d=100$, quantile loss parameter $\tau=0.5$, and horizon is unknown. The results under Gaussian noise and $t_{1.1}$ noise are presented in Figure~\ref{fig:dynamics}. The convergence dynamics of the two algorithms are comparable under Gaussian noise, both showing a linear convergence in the first phase and an $O(1/t)$ decaying rate afterwards. They achieve almost the same statistical accuracy in the end. Under $t_{1.1}$-distributed noise, online least squares algorithm does not converge. In contrast, the proposed online QR algorithm ensures stable convergence and achieves error rates comparable to those under Gaussian noise.

	\begin{figure}
		\centering
		\begin{subfigure}[b]{0.45\textwidth}
			\centering
			\includegraphics[width=\textwidth]{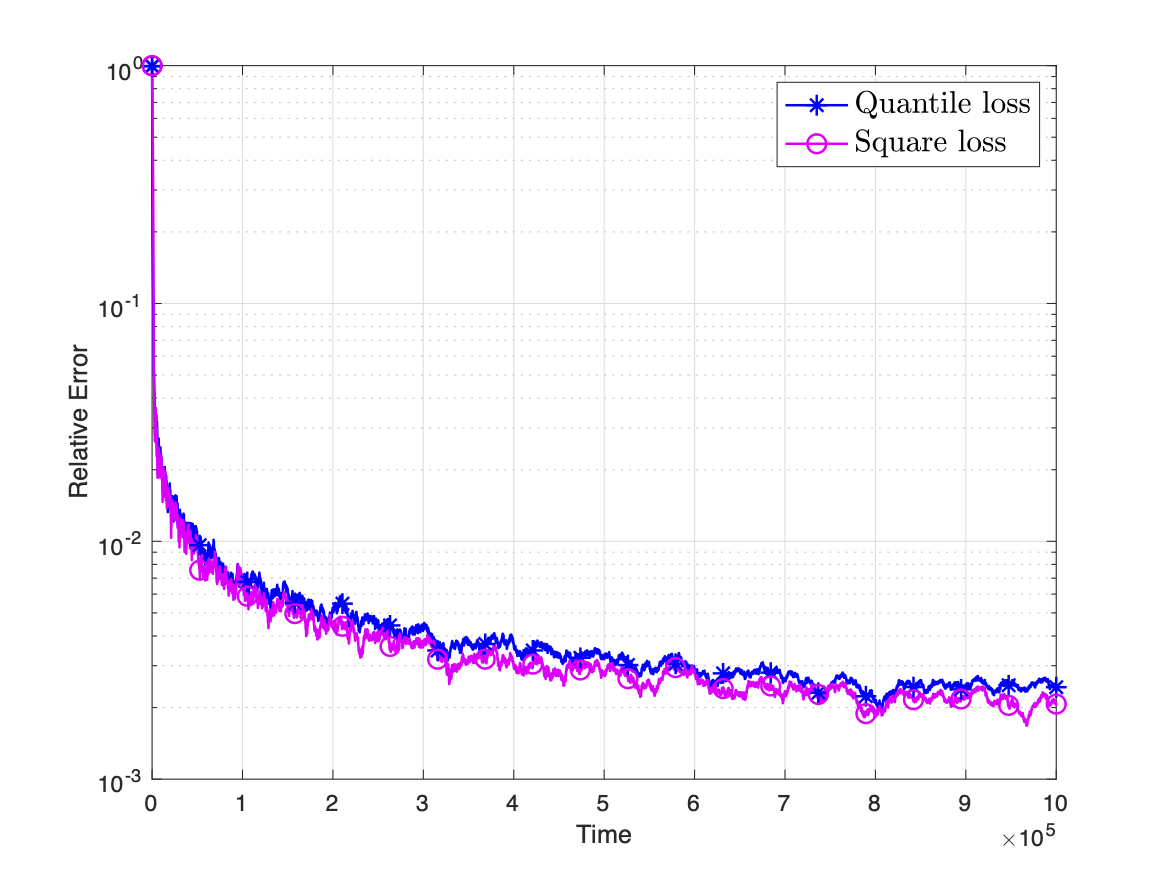}
			\caption{$\frac{\ltwo{\Bbeta^*}}{\EE|\xi|}=20$, Gaussian noise}
			\label{fig:dyn-GaussianSNR20}
		\end{subfigure}
		\hfill
		\begin{subfigure}[b]{0.45\textwidth}
			\centering
			\includegraphics[width=\textwidth]{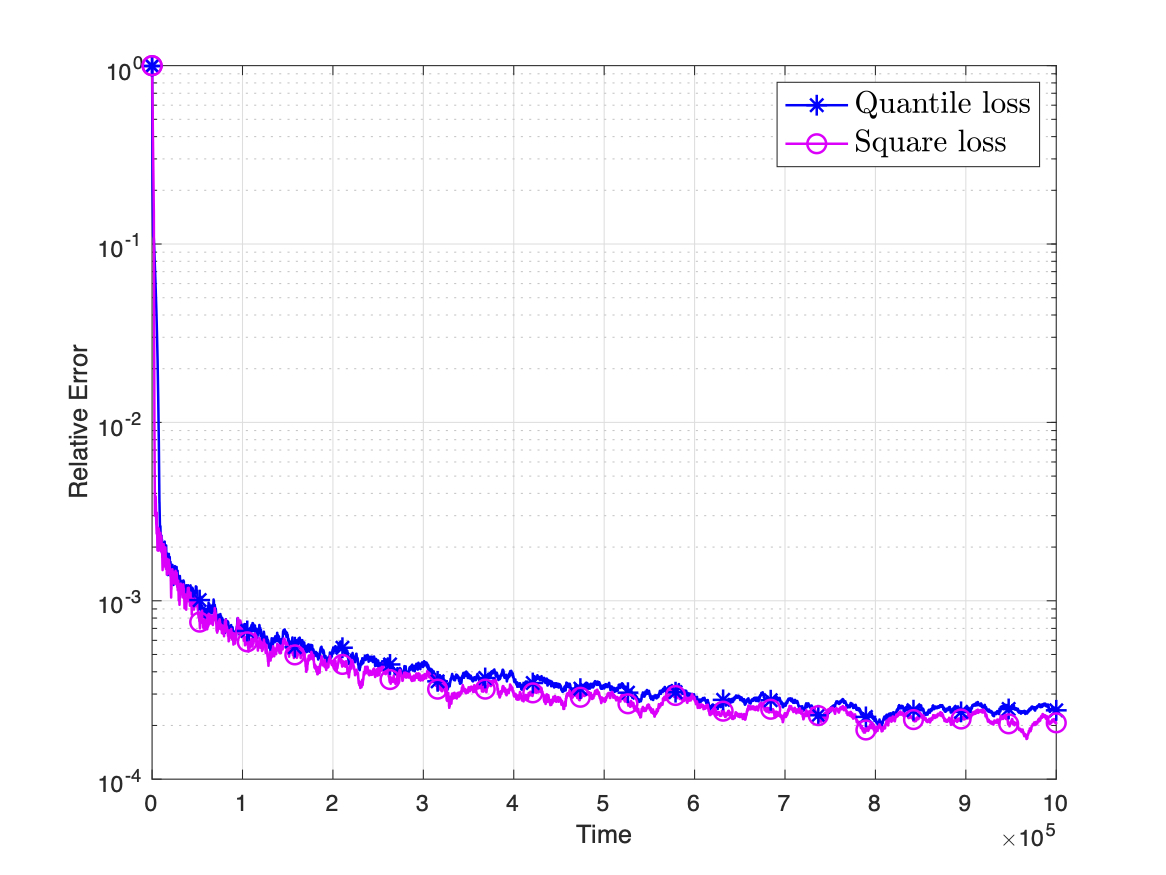}
			\caption{$\frac{\ltwo{\Bbeta^*}}{\EE|\xi|}=200$, Gaussian noise}
			\label{fig:dyn-GaussianSNR200}
		\end{subfigure}
		
		\begin{subfigure}[b]{0.45\textwidth}
			\centering
			\includegraphics[width=\textwidth]{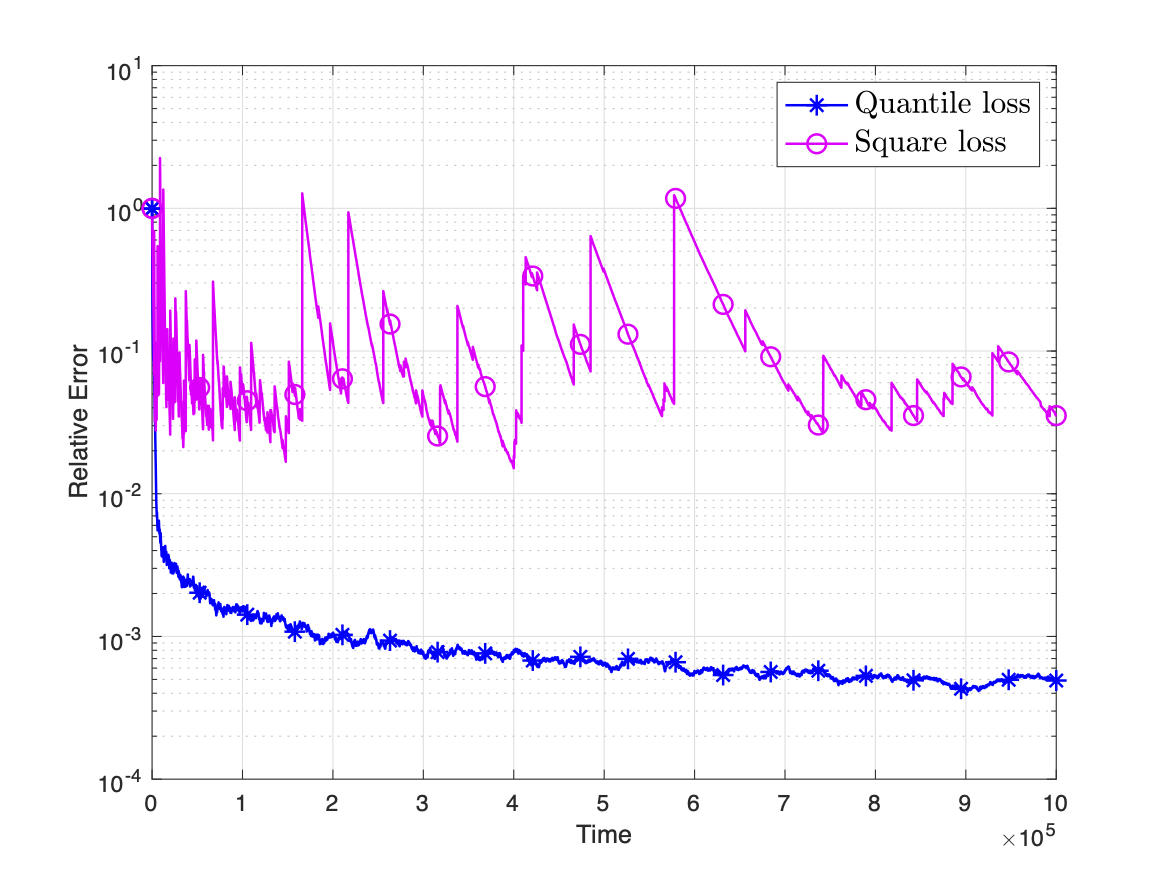}
			\caption{$\frac{\ltwo{\Bbeta^*}}{\EE|\xi|}=20$, $t_{1.1}$ noise}
			\label{fig:dyn-StudentSNR20}
		\end{subfigure}
		\hfill
		\begin{subfigure}[b]{0.45\textwidth}
			\centering
			\includegraphics[width=\textwidth]{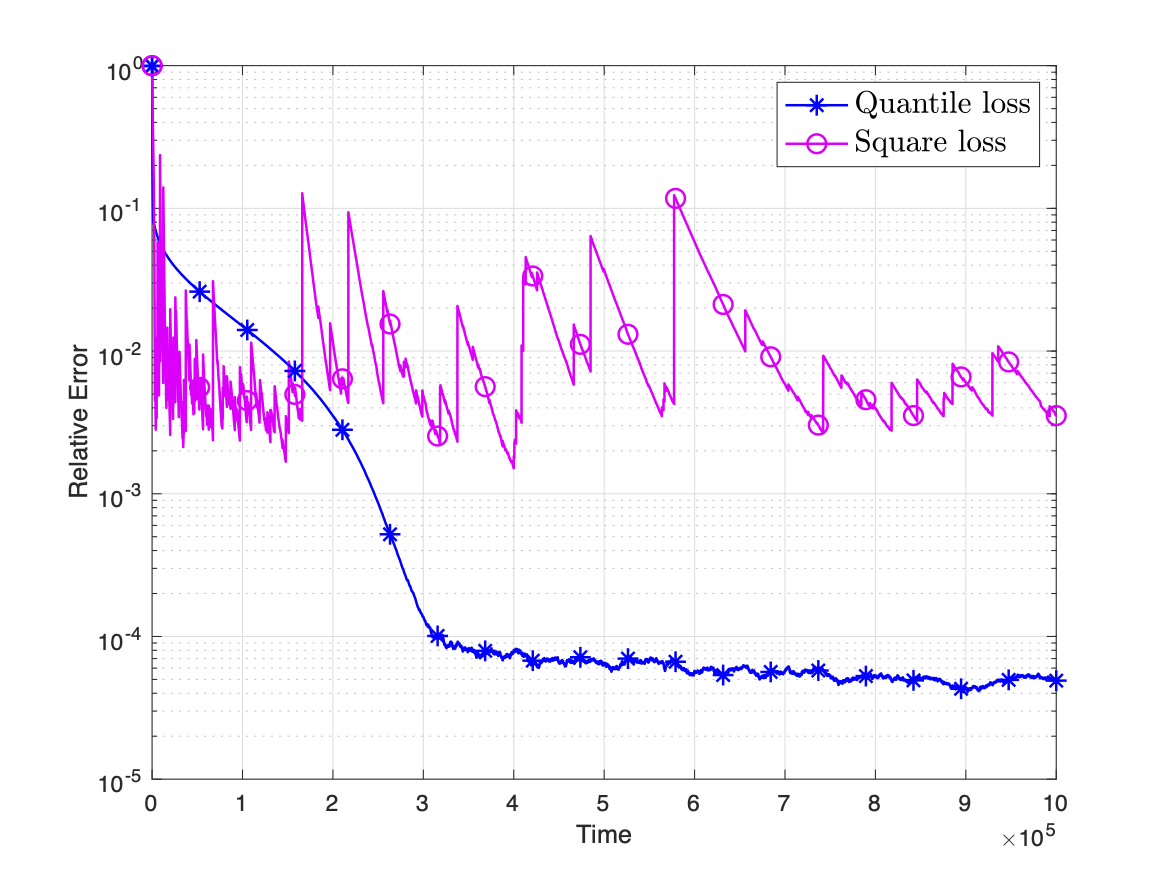}
			\caption{$\frac{\ltwo{\Bbeta^*}}{\EE|\xi|}=200$,  $t_{1.1}$ noise}
			\label{fig:dyn-StudentSNR200}
		\end{subfigure}
		
		\caption{Relative error \emph{versus} time/iterations for online QR and least squares algorithms. The dimension is $d=100$, the (unknown) time horizon is $T=10^5$, and the quantile level is $\tau=1/2$. The proposed online QR algorithm is robust to heavy-tailed noise and achieves comparable performance to online least squares under Gaussian noise.}
		\label{fig:dynamics}
	\end{figure}

	\medskip
	\noindent
	{\sc Parameter Sensitivity}. Here we empirically examine the flexibility of tuning parameters discussed in Theorem~\ref{thm:one_sample}. The simulation results demonstrating parameter sensitivity are presented in Figure~\ref{fig:parameter sensitivity}. Following the same settings as before, we set $d=100$, $\kl=\ku=1$ and $\| \Bbeta^* \|_2/\gamma = 20$. Figure~\ref{fig:parameter1} illustrates the impact of varying the geometric decay rate $c \in \{ 0.5, \ldots, 0.05 \}$ for the first-phase stepsize, defined as $\eta_t = (1 - c/d)^t \eta_0$, while the second-phase stepsize is fixed as $\eta_t = 15 / (t - t_1 + 20d)$. Interestingly, when $t$ is sufficiently large, different choices of the decay rate $c$ result in similar convergence behavior. This suggests that the decay rate $c$ in the first-phase stepsize is indeed flexible in practice. Figures~\ref{fig:parameter2}--\ref{fig:parameter4} further explore convergence dynamics under various values of $\ca$ and $\cb$. Specifically, Figure~\ref{fig:parameter3} shows the sensitivity to $\cb$ with $\ca = 15$ fixed, while Figure~\ref{fig:parameter4} examines the influence of $\ca$ with $\cb = 100$ held constant. These results indicate that as long as $\ca$ and $\cb$ are not too small, the estimation error remains on the same scale, and variations in $\cb$ have little effect on performance. However, if both parameters are too small (e.g., $\ca = 1$ and $\cb = 0$), the theoretical $O(1/t)$ convergence rate is no longer guaranteed. Furthermore, Figures~\ref{fig:parameter2}--\ref{fig:parameter4} depict the convergence dynamics under varying values of $\ca$ and $\cb$. Specifically, Figure~\ref{fig:parameter3} illustrates the effect of different choices of $\cb$ while fixing $\ca = 15$, and Figure~\ref{fig:parameter4} examines the sensitivity to $\ca$ with $\cb = 100$ held constant. These results suggest that as long as $\ca$ and $\cb$ are not too small, the estimation error remains on the same scale, and the choice of $\cb$ has minimal impact on performance. In contrast, when both parameters are set to overly small values, for example, $\ca = 1$ and $\cb = 0$, the convergence guarantee at the $O(1/t)$ rate no longer holds. These empirical findings support the theoretical flexibility of $\ca$ and $\cb$ as established in Theorem~\ref{thm:one_sample}.

	\begin{figure}
		\centering
		\begin{subfigure}[b]{0.45\textwidth}
			\centering
			\includegraphics[width=\textwidth]{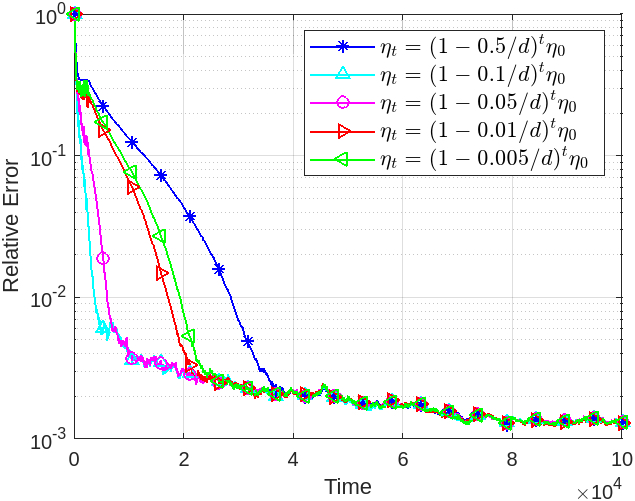}
			\caption{Convergence under varying geometrically decaying step sizes for $t\leq t_1$, followed by a common fixed stepsize for $t\geq t_1$.}
			\label{fig:parameter1}
		\end{subfigure}
		\hfill
		\begin{subfigure}[b]{0.45\textwidth}
			\centering
			\includegraphics[width=\textwidth]{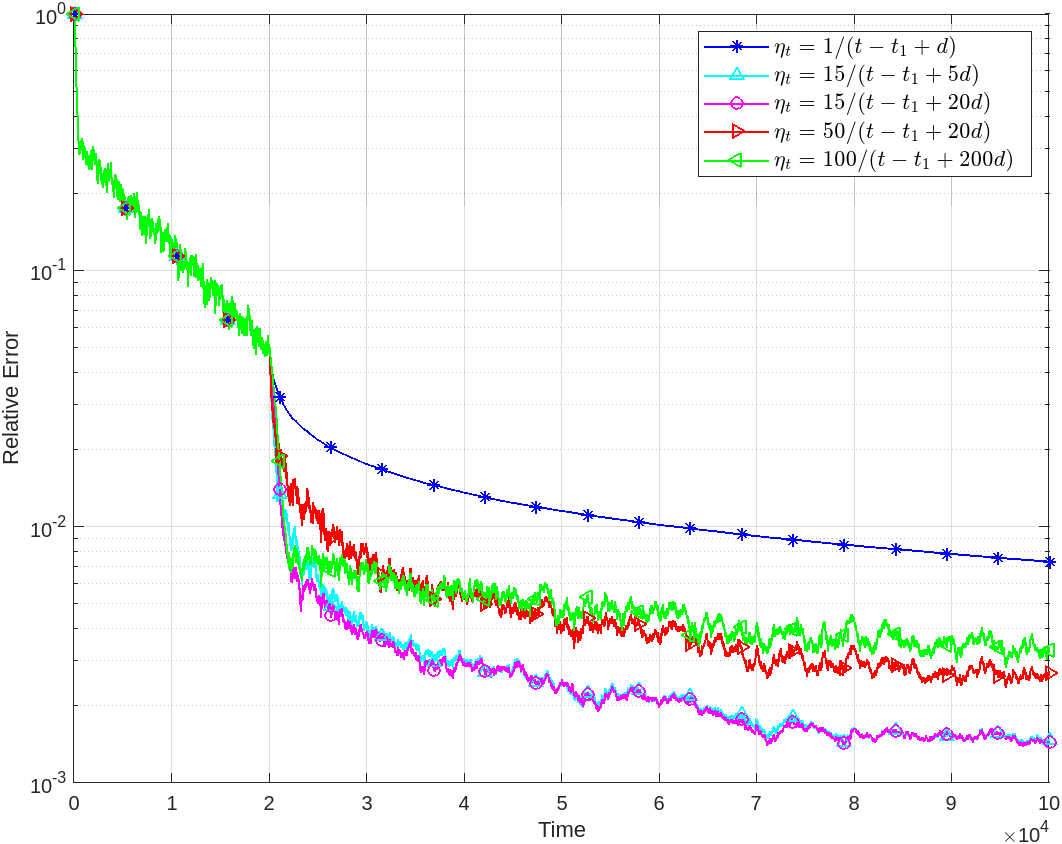}
			\caption{Convergence under varying values of $\ca$ and $\cb$ for $t\geq t_1$, with a common stepsize applied for $t\leq t_1$.}
			\label{fig:parameter2}
		\end{subfigure}
		
		\begin{subfigure}[b]{0.45\textwidth}
			\centering
			\includegraphics[width=\textwidth]{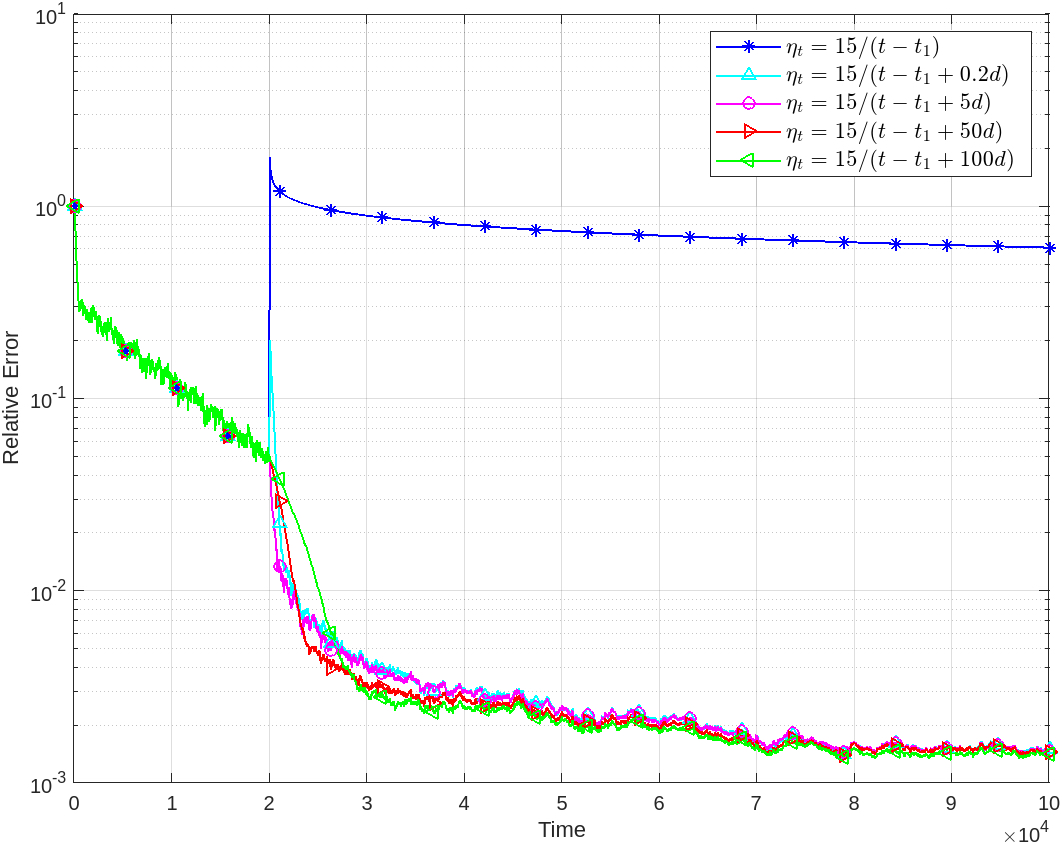}
			\caption{Convergence under a fixed $\ca$ for $t\geq t_1$, and a shared stepsize scheme for $t\leq t_1$.}
			\label{fig:parameter3}
		\end{subfigure}
		\hfill
		\begin{subfigure}[b]{0.45\textwidth}
			\centering
			\includegraphics[width=\textwidth]{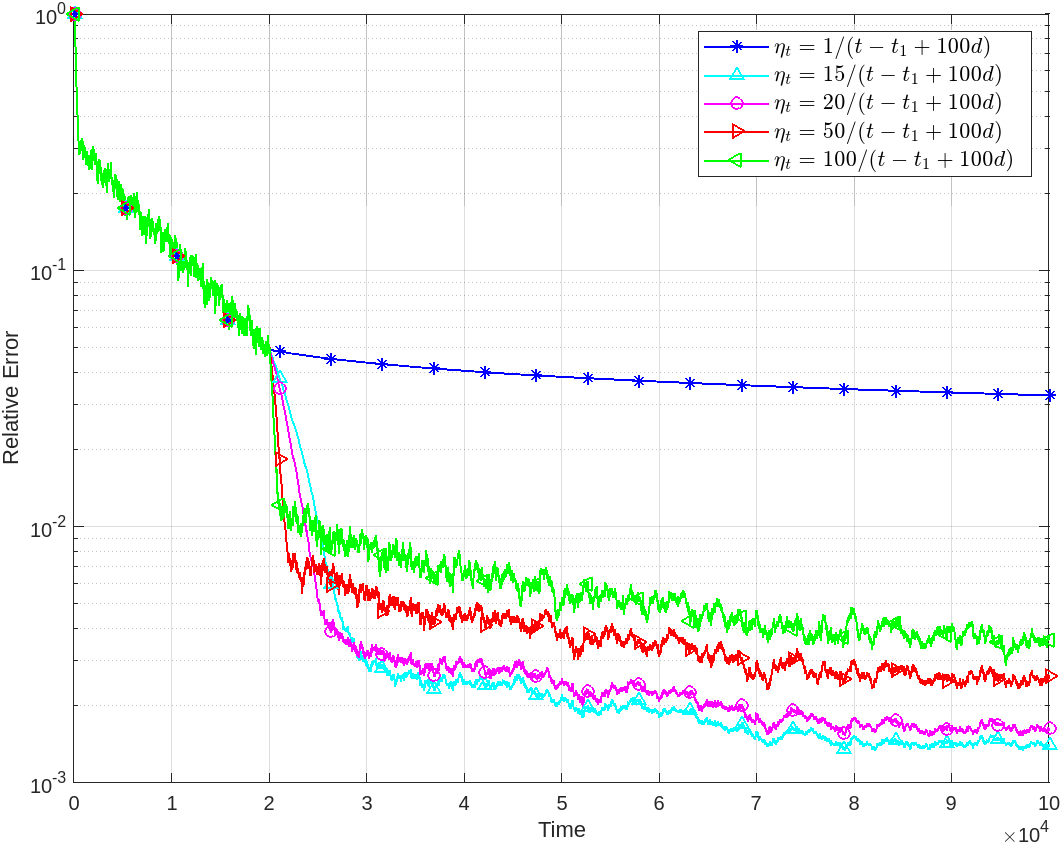}
			\caption{Convergence under a fixed value of $\cb$ for $t\geq t_1$, and a shared stepsize scheme for $t\leq t_1$.}
			\label{fig:parameter4}
		\end{subfigure}
		\caption{Relative error \emph{versus} time/iteration for online QR. The dimension is $d=100$, with a signal-to-noise ratio of $\frac{\|\Bbeta^*\|}{\EE|\xi|}=20$. The noise is generated from a $t$-distribution with 1.1 degrees of freedom. The time horizon is set to $T=10^5$.}
		\label{fig:parameter sensitivity}
	\end{figure}

	\medskip
	\noindent
	{\sc Regret Dynamics}. In this paragraph, we examine the regret dynamics illustrated in Figure~\ref{fig:regret}, under the setting where $d=200$, $T=10^5$, and the noise follows a $t$-distribution with 1.1 degrees of freedom ($t_{1.1}$). The left panel (Figure~\ref{fig:regret1}) corresponds to the stepsize schemes $\eta_t=(1-0.05/d)^t\eta_0$ and $\ca/(t-t_1+\cb d)$. The pink star-dotted line represents a benchmark function of the form $b + \log(1+t/a)$. As shown in Figure~\ref{fig:regret1}, the curves exhibit highly parallel growth patterns. Figure~\ref{fig:regret2} displays the regret values over time $t$ under different stepsize schemes, all showing a logarithmic trend of the form $b + \log(1+t/a)$, albeit with different parameterizations.

	\begin{figure}
		\centering
		\begin{subfigure}[b]{0.45\textwidth}
			\centering
			\includegraphics[width=\textwidth]{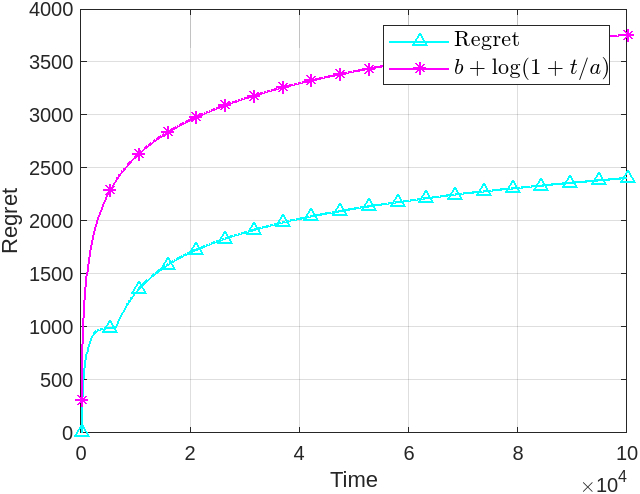}
			\caption{Regret Dynamics with Oracle $\log(t)$ Function}
			\label{fig:regret1}
		\end{subfigure}
		\hfill
		\begin{subfigure}[b]{0.45\textwidth}
			\centering
			\includegraphics[width=\textwidth]{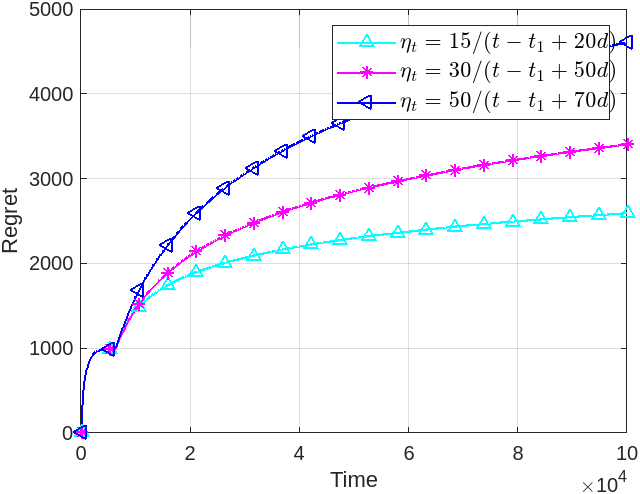}
			\caption{Regret Dynamics under Different Stepsizes}
			\label{fig:regret2}
		\end{subfigure}
		\caption{Regret dynamics \emph{versus} time/iteration for online QR. The dimension $d=100$, signal-to-noise ratio $\frac{\|\Bbeta^*\|}{\EE|\xi|}=20$, unknown horizon $T=10^5$, $t_{1.1}$ noise.}
		\label{fig:regret}
	\end{figure}

	\subsection{Real Data Example}
	
	In this section, we analyze a real-world dataset on news popularity provided by \cite{moniz2018multi}. Our focus is on news articles categorized under Economy that were published on Facebook. The objective is to predict the final popularity of each news item using three sets of predictors: the sentiment score of the headline text, the sentiment score of the body text, and the popularity levels measured at successive time intervals post-publication ($0-6h, \ldots, 18-24h$). The dataset contains a total of $N=29928$ samples and $d=7$ predictors. We randomly split the dataset into a training set with $n_1=20000$ observations and a test set with $n_2=9928$ observations. We apply the proposed online QR and batch learning algorithms to the training set. As benchmarks, we include the offline QR, implemented using \texttt{quantreg} \citep{portnoy1997gaussian}, and the ordinary least squares (OLS) estimators. It is worth noting that this dataset was also examined by \cite{chen2020quantile}, although their analysis was based on a different subset of the data, as some records from the original source are no longer available. We report the test errors $n_2^{-1} \sum_{(Y_i,\X_i)\in \text{TestData}} |Y_i-\X_i^{\top}\widehat{\Bbeta}|$ in Table~\ref{table:realdata}. The outcome variable $Y$ ranges from approximately $-1$ to $10^4$, with an average absolute value on the test set exceeding $50$. Despite this wide range, the prediction errors reported in Table~\ref{table:realdata} are substantially smaller, indicating strong predictive performance. Among the three QR methods--online, batch, and offline--both online and batch QR yield slightly lower prediction errors compared to offline QR. All three quantile-based approaches, however, significantly outperform the OLS in terms of test error. Moreover, consistent with the findings of \cite{chen2020quantile}, we observe that the estimated coefficients for the sentiment scores of both the headline and body text are close to zero. This suggests that, in this setting, sentiment features provide limited predictive value for news popularity.

	\begin{table}
		\begin{center}
			\begin{tabular}{|m{5em}|m{5em}|m{8em}|m{5em}|m{3em}||}
				\hline\hline
				Method& Online QR & Online Batch QR & Offline QR &  OLS \\
				\hline
				Test Error& 8.5824 & 8.5084 & 8.1895& 9.8752\\
				\hline
			\end{tabular}
		\end{center}
		\caption{Prediction performance on the news popularity dataset.}
		\label{table:realdata}
	\end{table}

	\section{Proof of Theorem~\ref{thm:one_sample}}
	\label{sec:proof_online}
	
	This section presents the proof of Theorem~\ref{thm:one_sample}. Proofs of the remaining results are deferred to the supplementary material. The update procedure $\Bbeta_{t+1}=\Bbeta_t-\eta_t\g_t$ yields
	\begin{align}
		\ltwo{\Bbeta_{t+1}-\Bbeta^*}^2&=\ltwo{\Bbeta_{t}-\Bbeta^*}^2-2\eta_t\inp{\Bbeta_{t}-\Bbeta^*}{\g_t}+\eta_t^2\ltwo{\g_t}^2 , 
		\label{eq:update one sample}
	\end{align}
	where the sub-gradient $\g_t$ is given by
	$\g_t=\X_t \{ -\tau\cdot1_{(Y_t>\X_t^{\top}\Bbeta_{t})}+(1-\tau)\cdot 1_{(Y_t<\X_t^{\top}\Bbeta_{t})} +\delta \cdot1_{(Y_t=\X_t^{\top}\Bbeta_{t})} \}$, and $\delta$ can take any value in the range $[-\tau,1-\tau]$. It follows that
	$$
	\ltwo{\g_t}^2 \geq\min\{\tau, 1-\tau \}^2 \ltwo{\X_t\cdot 1_{(Y_t\neq \X_t\Bbeta_t ) }}^2+\delta^2 \ltwo{\X_t\cdot 1_{ ( Y_t= \X_t\Bbeta_t ) }}^2
	$$
	and
	$$  
	\ltwo{\g_t}^2 \leq\max\{\tau , 1-\tau \}^2 \ltwo{\X_t\cdot 1_{(Y_t\neq \X_t\Bbeta_t)}}^2+\delta^2 \ltwo{\X_t\cdot 1_{(Y_t= \X_t\Bbeta_t) }}^2.
	$$
	By Lemma~\ref{teclem:gaussian_vector}, $\ltwo{\X_t}^2\leq 2\ku d$ holds with probability at least $1-\exp(-cd)$, which further implies that $\ltwo{\g_t}^2\leq 2\bar{\tau}^2 \ku   d$ for all $t\geq 0$, where $\bar \tau = \max(\tau, 1-\tau)$.
	
	\medskip
	\noindent
	{\sc  First Phase Analysis.}
	We aim to establish the convergence dynamics $$\ltwo{\Bbeta_{t}-\Bbeta^*}\leq C^* \left(1-c_5\frac{\kl}{\ku}\frac{1}{\bar{\tau}^2d}\right)^{t}\ltwo{\Bbeta_{0}-\Bbeta^*},$$ through induction, where the constant $C^*>1$ remains fixed throughout the proof and $0<c_5<1$ is some sufficiently small constant to be specified later.
	
	Initially, the inequality $\ltwo{\Bbeta_{0}-\Bbeta^*}\leq C^* \ltwo{\Bbeta_{0}-\Bbeta^*}$ holds trivially. For notational convenience, define $$D_l:=(1-c_5\frac{\kl}{\ku}\frac{1}{\bar{\tau}^2d})^{l}\ltwo{\Bbeta_{0}-\Bbeta^*} \text{ for } l=0, 1, \ldots.$$ Assuming the inductive hypothesis $\ltwo{\Bbeta_{l}-\Bbeta^*}\leq C^* D_l$ holds for all $l=0,1,\dots,t$, our goal is to establish that $\ltwo{\Bbeta_{t+1}-\Bbeta^*}\leq C^*  D_{t+1}$.

	By convexity and the property of sub-gradient, $\inp{\Bbeta_{t}-\Bbeta^*}{\g_t}\geq f_t(\Bbeta_{t})-f_t(\Bbeta^*)$. Substituting this into equation \eqref{eq:update one sample} yields
	\begin{multline*}
		\ltwo{\Bbeta_{t+1}-\Bbeta^*}^2 \leq \ltwo{\Bbeta_{t}-\Bbeta^*}^2-2\eta_t \{ (f_t(\Bbeta_{t}) -f_t(\Bbeta^*) \} + 2\eta_t^2 \ku\bar{\tau}^2d \\
		= \ltwo{\Bbeta_{t}-\Bbeta^*}^2-2\eta_t\EE \{ f_t(\Bbeta_{t}) -f_t(\Bbeta^*) | \Bbeta_{t} \} + 2 \eta_t^2 \ku\bar{\tau}^2d\\
		-2\eta_t \big[ f_t(\Bbeta_{t}) -f_t(\Bbeta^*)-\EE\{ f_t(\Bbeta_{t}) -f_t(\Bbeta^*) | \Bbeta_{t} \}  \big] .
	\end{multline*}
	For the expected difference $\EE \{ f_t(\Bbeta_{t}) -f_t(\Bbeta^*) | \Bbeta_{t}  \}$, Lemma~\ref{teclem:lowerboundofESR} implies
	\begin{align*}
		\EE \{ f_t(\Bbeta_{t}) -f_t(\Bbeta^*) | \Bbeta_{t}  \}
		&\geq\sqrt{\frac{\kl}{2\pi}} \ltwo{\Bbeta_{t}-\Bbeta^*}-\gamma.
	\end{align*}
	During phase one, where $\ltwo{\Bbeta_{t}-\Bbeta^*}\geq 8 \kl^{-1/2} \gamma$, it follows from the above lower bound that
	\begin{multline*}
		\ltwo{\Bbeta_{t+1}-\Bbeta^*}^2
		\leq \ltwo{\Bbeta_{t}-\Bbeta^*}^2-\frac{1}{3}\eta_t\sqrt{\kl} \ltwo{\Bbeta_{t}-\Bbeta^*} + 2\eta_t^2 \ku\bar{\tau}^2d\\
		-2\eta_t  \big[ f_t(\Bbeta_{t}) -f_t(\Bbeta^*)-\EE\{ f_t(\Bbeta_{t}) -f_t(\Bbeta^*) | \Bbeta_{t} \}  \big] .
	\end{multline*} 
	We further rewrite the above inequality as
	\begin{multline*}
		\ltwo{\Bbeta_{t+1}-\Bbeta^*}^2\leq\left(1-\frac{\sqrt{\kl}}{3}\frac{\eta_t}{\|\Bbeta_t-\Bbeta^*\|_2}\right)\|\Bbeta_t-\Bbeta^*\|_2^2+2\eta_t^2\bar{\tau}^2\ku d\\
		-2\eta_t  \big[ f_t(\Bbeta_{t}) -f_t(\Bbeta^*)-\EE\{ f_t(\Bbeta_{t}) -f_t(\Bbeta^*) | \Bbeta_{t} \}  \big].
	\end{multline*}
	By the induction hypothesis, $\|\Bbeta_t-\Bbeta^*\|\leq C^*D_t$, and hence
	\begin{multline*}
		\ltwo{\Bbeta_{t+1}-\Bbeta^*}^2\leq\left(1-\frac{\sqrt{\kl}}{3}\frac{\eta_t}{C^*D_t}\right)\|\Bbeta_t-\Bbeta^*\|_2^2+2\eta_t^2\bar{\tau}^2\ku d\\
		-2\eta_t  \big[ f_t(\Bbeta_{t}) -f_t(\Bbeta^*)-\EE\{ f_t(\Bbeta_{t}) -f_t(\Bbeta^*) | \Bbeta_{t} \}  \big]
	\end{multline*}
	Substituting the stepsize $\eta_t=c\frac{\sqrt{\kl}}{\ku}\frac{D_t} {\bar{\tau}^2d}$ into the inequality above, we obtain
	\begin{multline*}
		\ltwo{\Bbeta_{t+1}-\Bbeta^*}^2\leq\left(1-\frac{c}{3C^*}\frac{\kl}{\ku} \frac{1}{\bar{\tau}^2 d }\right)\|\Bbeta_t-\Bbeta^*\|_2^2+2c^2\frac{\kl}{\ku} \frac{1}{\bar{\tau}^2 d }D_t^2\\
		-2 c\frac{\sqrt{\kl}}{\ku}\frac{D_t}{\bar{\tau}^2d} \big[ f_t(\Bbeta_{t}) -f_t(\Bbeta^*)-\EE\{ f_t(\Bbeta_{t}) -f_t(\Bbeta^*) | \Bbeta_{t} \}  \big].
	\end{multline*}
	By applying this iterative bound repeatedly starting from $t=0$, we obtain
	\begin{align*}
		& \ltwo{\Bbeta_{t+1}-\Bbeta^*}^2  \\
		& \leq \left(1-\frac{c}{3C^*}\frac{\kl}{\ku}\frac{1}{\bar{\tau}^2d}\right)^{t+1}\ltwo{\Bbeta_{0}-\Bbeta^*}^2+2c^2\frac{\kl}{\ku}\frac{1}{d}\frac{1}{\bar{\tau}^2}\sum_{l=0}^{t}\left(1-\frac{c}{3C^*}\frac{\kl}{\ku}\frac{1}{\bar{\tau}^2d}\right)^{t-l}D_l^2\\
		&~~~~~ -2c\frac{\sqrt{\kl}}{\ku}\frac{1}{\bar{\tau}^2d} \sum_{l=0}^{t} \left(1-\frac{c}{3C^*}\frac{\kl}{\ku}\frac{1}{\bar{\tau}^2d}\right)^{t-l}D_l\big[ f_l(\Bbeta_{l}) -f_l(\Bbeta^*)-\EE\{f_t(\Bbeta_{t}) -f_t(\Bbeta^*) | \Bbeta_{t} \}  \big].
	\end{align*}
	We begin by bounding the second term on the right-hand side of the above inequality as
	\begin{align*}
		&  \sum_{l=0}^{t}\left(1-\frac{c}{3C^*}\frac{\kl}{\ku}\frac{1}{\bar{\tau}^2d}\right)^{t-l}D_l^2 \\
		& =\sum_{l=0}^{t}\left(1-\frac{c}{3C^*}\frac{\kl}{\ku}\frac{1}{\bar{\tau}^2d}\right)^{t-l}\left(1-c_5\frac{\kl}{\ku}\frac{1}{\bar{\tau}^2d}\right)^{2l}D_0^2\\
		& \leq \left(1-\frac{c}{3C^*}\frac{\kl}{\ku}\frac{1}{\bar{\tau}^2d}\right)^{t}\frac{1}{\frac{c}{3C^*}-c_5}\cdot\frac{\ku}{\kl}\bar{\tau}^2d \\
		& \leq\frac{6C^*}{c}\left(1-\frac{c}{3C^*}\frac{\kl}{\ku}\frac{1}{\bar{\tau}^2d}\right)^{2t}\frac{\ku}{\kl}\bar{\tau}^2d ,
	\end{align*}
	where the second line follows from the inequality $1-2a\leq(1-a)^2 $ and the condition $c_5\leq c/(6C^*)$. This further implies
	\begin{align*}
		2c^2\frac{\kl}{\ku}\frac{1}{d}\frac{1}{\bar{\tau}^2}\sum_{l=0}^{t}\left(1-\frac{c}{3C^*}\frac{\kl}{\ku}\frac{1}{\bar{\tau}^2d}\right)^{t-l}D_l^2\leq 12cC^*\left(1-\frac{c}{3C^*}\frac{\kl}{\ku}\frac{1}{\bar{\tau}^2d}\right)^{t}D_0^2.
	\end{align*}
	
	To bound the last stochastic term, under the inductive hypothesis $\cap_{l=0}^t\{\ltwo{\Bbeta_{l}-\Bbeta^*}^2\leq C^{*2}D_l^2\}$, we have 
	$$
	\|f_l(\Bbeta_{l}) -f_l(\Bbeta^*)-\EE\left[f_l(\Bbeta_{l}) -f_l(\Bbeta^*)\big| \Bbeta_{l}\right] \|_{\Psi_2}\leq C_1C^*\sqrt{\ku}\bar{\tau}D_l , \, l = 0, 1, \ldots, t,
	$$ 
	where $\|\cdot\|_{\Psi_2}$ denotes the Orlicz norm associated with the Orlicz function $\Psi(x)=\exp(x^2)-1$. Then, by applying a variant of Azuma's inequality \citep{shamir2011variant}, it follows that with probability at least  $1-\exp(-c_1d)$, 
	\begin{align*}
		&~~~~~\left|\sum_{l=0}^{t} \left(1-\frac{c}{3}\frac{\kl}{\ku}\frac{1}{\bar{\tau}^2d}\right)^{t-l}D_l\big[f_l(\Bbeta_{l}) -f_l(\Bbeta^*)-\EE \{ f_l(\Bbeta_{l}) -f_l(\Bbeta^*) | \Bbeta_{l}\} \big]\right|\\
		&\leq \bar{\tau} C^{*} \sqrt{\ku d\sum_{l=0}^{t} \left(1-\frac{c}{3C^*}\frac{\kl}{\ku}\frac{1}{\bar{\tau}^2d}\right)^{2t-2l}D_l^4}\\
		&= \bar{\tau}C^{*}\sqrt{ \ku d\sum_{l=0}^{t} \left(1-\frac{c}{3C^*}\frac{\kl}{\ku}\frac{1}{\bar{\tau}^2d}\right)^{2t-2l} \left(1-c_5\frac{\kl}{\ku}\frac{1}{\bar{\tau}^2d}\right)^{4l}}\times D_0^2\\
		&\leq \bar{\tau}C^{*}\sqrt{ \ku d \left(1-\frac{c}{3C^*}\frac{\kl}{\ku}\frac{1}{\bar{\tau}^2d}\right)^{2t} \cdot\frac{6C^*}{c}\frac{\ku}{\kl}\bar{\tau}^2d}\times D_0^2\\
		&\leq \bar{\tau}^2d\times\left(1-c_5\frac{\kl}{\ku}\frac{1}{\bar{\tau}^2d}\right)^{2t} D_{0}^2\times \sqrt{\frac{\ku^2}{\kl}}\sqrt{\frac{C^{*3}}{c}},
	\end{align*}
	where the last two inequalities follow from the bound $1-2a\leq (1-a)^2$ for $0<a<1$, together with the condition $c_5\leq c/(6C^*)$.

	Putting together the pieces, we conclude that for any $C^*$ satisfying $ c\leq C^*\leq c/(6c_5)$, it holds
	\begin{align*}
		&~~~~~\ltwo{\Bbeta_{t+1}-\Bbeta^*}^2\\
		&\leq\left(1-\frac{c}{3C^*}\frac{\kl}{\ku}\frac{1}{\bar{\tau}^2d}\right)^{t+1}\ltwo{\Bbeta_{0}-\Bbeta^*}^2+12cC^*\left(1-\frac{c}{3C^*}\frac{\kl}{\ku}\frac{1}{\bar{\tau}^2d}\right)^{t}D_0^2+\sqrt cC^{*3/2}D_{t+1}^2\\
		&\leq \frac{1}{3}D_{t+1}^2+12cC^*\left(1-c_5\frac{\kl}{\ku}\frac{1}{\bar{\tau}^2d}\right)^{2t}D_0^2\exp\left(t\log\left(\frac{1-\frac{c}{3C^*}\frac{\kl}{\ku}\frac{1}{\bar{\tau}^2d}}{(1-c_5\frac{\kl}{\ku}\frac{1}{\bar{\tau}^2d})^2}\right)\right)+\sqrt cC^{*3/2}D_{t+1}^2\\
		&\leq C^{*2}D_{t+1}^2 .
	\end{align*}
	This completes the proof of convergence in phase one.

	\medskip
	\noindent
	{\sc Second Phase Analysis}.
	The verification of the second phase is also carried out by induction.  The proof for the step $t_1+1$ is omitted due to its simplicity, as it follows analogously from the subsequent argument. We assume $\cap_{l=t_1}^{t}\{ \ltwo{\Bbeta_{l}-\Bbeta^*}^2\leq\frac{C^*d}{l-t_1+\cb d}\frac{\ku}{\kl^2}b_0^2\}$ and proceed to analyze the behavior at iteration $t+1$. Within this second phase, Lemma~\ref{teclem:loss_expectation} establishes a quadratic lower bound on the expected excess loss, that is,
	$$
	\EE\left[f_t(\Bbeta_{t}) -f_t(\Bbeta^*)\big|\Bbeta_{t}\right]\geq\frac{\kl}{12b_0}\ltwo{\Bbeta_t-\Bbeta^*}^2.
	$$
	Subsequently, the right-hand side of equation \eqref{eq:update one sample} can be upper bounded by
	\begin{align*}
		\ltwo{\Bbeta_{t+1}-\Bbeta^*}^2&\leq \ltwo{\Bbeta_{t}-\Bbeta^*}^2-\frac{1}{6b_0}\eta_t\kl \ltwo{\Bbeta_{t}-\Bbeta^*}^2 + 2\eta_t^2 \ku\bar{\tau}^2d\\
		&~~~~~~~~~~~~~~~~~~~~~~-2\eta_t\left[f_t(\Bbeta_{t}) -f_t(\Bbeta^*)-\EE\left[f_t(\Bbeta_{t}) -f_t(\Bbeta^*)\big| \Bbeta_{t}\right] \right].
	\end{align*}
	Incorporating the chosen stepsize $\eta_t=\frac{\ca}{t-t_1+\cb d}\frac{b_0}{\kl}$ preceding equation, we obtain
	\begin{align*}
		\ltwo{\Bbeta_{t+1}-\Bbeta^*}^2&\leq\left(1-\frac{\ca}{12(t-t_1+\cb d)}\right)\ltwo{\Bbeta_{t}-\Bbeta^*}^2+2b_0^2\bar{\tau}^2\frac{\ku}{\kl^2}\frac{\ca^2}{(t-t_1+\cb d)^2}d\\
		&~~~~~~~~~~~~~-2\frac{\ca}{t-t_1+\cb d}\frac{b_0}{\kl}\left[f_t(\Bbeta_{t}) -f_t(\Bbeta^*)-\EE\left[f_t(\Bbeta_{t}) -f_t(\Bbeta^*)\big| \Bbeta_{t}\right] \right].
	\end{align*}
	Summing the above inequality over $t$ from $t_1$ onward, we arrive at
	\begin{equation}
		\begin{split}
			&~~~\ltwo{\Bbeta_{t+1}-\Bbeta^*}^2\\
			&\leq\prod_{l=t_1}^{t}\left(1-\frac{\ca}{12(l-t_1+\cb d)}\right)\ltwo{\Bbeta_{t_1}-\Bbeta^*}^2\\
			&~~~+2b_0^2\bar{\tau}^2\frac{\ku}{\kl^2}d\sum_{l=t_1}^{t}\left(1-\frac{\ca}{12(l+1-t_1+\cb d)}\right)\cdots \left(1-\frac{\ca}{12(t-t_1+\cb d)}\right)\frac{\ca^2}{(l-t_1+\cb d)^2}\\
			&~~~-2\frac{b_0}{\kl} \sum_{l=t_1}^{t}\left(1-\frac{\ca}{12(l+1-t_1+\cb d)}\right)\cdots \left(1-\frac{\ca}{12(t-t_1+\cb d)}\right) \frac{\ca}{l-t_1+\cb d}\\&~~~~~~~~~~\times\left[f_l(\Bbeta_{l}) -f_l(\Bbeta^*)-\EE\left[f_l(\Bbeta_{l}) -f_l(\Bbeta^*)\big| \Bbeta_{l}\right] \right].
		\end{split}
		\label{eq2}
	\end{equation}

	A sharp bound on the above equation constitutes a key step in the proof. We proceed by analyzing each of the three terms on the right-hand side separately.
	Note that when $\ca>12$, we have 
	$$
	\bigg(1-\frac{\ca}{12(l-t_1+\cb d)}\bigg)\leq \frac{l-t_1+\cb d}{l+1-t_1+\cb d}.
	$$
	Then, the first term on the right-hand side of \eqref{eq2} satisfies
	$$
	\prod_{l=t_1}^{t}\left(1-\frac{\ca}{12(l-t_1+\cb d)}\right)\ltwo{\Bbeta_{t_1}-\Bbeta^*}^2\leq\frac{\cb d}{t+1-t_1+\cb d}\ltwo{\Bbeta_{t_1}-\Bbeta^*}^2. 
	$$
	Consider the product sequence, which can be bounded as
	\begin{equation}
		\begin{split}
			&~~~\left(1-\frac{\ca}{12(l+1-t_1+\cb d)}\right)\cdots \left(1-\frac{\ca}{12(t-t_1+\cb d)}\right)\\&=\exp\left(\sum_{k=l+1}^{t}\log\left(1-\frac{\ca}{12(k-t_1+\cb d)}\right)\right)\\
			&\leq \exp\left(-\sum_{k=l+1}^{t}\frac{\ca}{12(k-t_1+\cb d)}\right)\leq \exp\left(-\frac{\ca}{12}\int_{l+1-t_1}^{t+1-t_1} \frac{1}{x+\cb d}\; dx\right)\\
			&\leq\exp\left(-\frac{\ca}{12}\log\left(\frac{t+1-t_1+\cb d}{l+1-t_1+\cb d}\right)\right)=\left(\frac{l+1-t_1+\cb d}{t+1-t_1+\cb d}\right)^{\frac{\ca}{12}},
			\label{eq:1-1}
		\end{split}
	\end{equation}
	where the first inequality uses the bound $\log(1+x)\leq x$, and the third line leverages the relationship between integrals and discrete sums. Consequently, the second term in equation \eqref{eq2} can be bounded as
	\begin{align*}
		&~~~\sum_{l=t_1}^{t}\left(1-\frac{\ca}{12(l+1-t_1+\cb d)}\right)\cdots \left(1-\frac{\ca}{12(t-t_1+\cb d)}\right)\frac{\ca^2}{(l-t_1+\cb d)^2}\\
		&\leq \sum_{l=t_1}^{t}\left(\frac{l+1-t_1+\cb d}{t+1-t_1+\cb d}\right)^{\frac{\ca}{12}}\frac{\ca^2}{(l-t_1+\cb d)^2}\\
		&\leq \left(\frac{1+\cb d}{\cb d}\right)^{\frac{\ca}{12}}\frac{\ca^2}{( t+1-t_1+\cb d)^{\frac{\ca}{12}}}\sum_{l=t_1}^{t}(l-t_1+\cb d)^{\frac{\ca}{12}-2}.
	\end{align*}
	With $\ca>12$, we have $$\sum_{l=t_1}^{t}(l-t_1+\cb d)^{\frac{\ca}{12}-2}\leq\int_{0}^{t-t_1+1}(x+\cb d)^{\frac{\ca}{12}-2}\; dx=\frac{1}{\frac{\ca}{12}-1}(t-t_1+1+\cb d )^{\frac{\ca}{12}-1}.$$ Combining these results, we obtain the following upper bound for the second term in equation~\eqref{eq2}:
	\begin{align*}
		&~~~\sum_{l=t_1}^{t}\left(1-\frac{\ca}{12(l+1-t_1+\cb d)}\right)\cdots \left(1-\frac{\ca}{12(t-t_1+\cb d)}\right)\frac{\ca^2}{(l-t_1+\cb d)^2}\\
		&\leq \left(\frac{1+\cb d}{\cb d}\right)^{\frac{\ca}{12}}\frac{\ca^2}{ t+1-t_1+\cb d}.
	\end{align*}
	Assuming $\cap_{l=t_1}^t\{\ltwo{\Bbeta_{l}-\Bbeta^*}^2\leq\frac{C^*d}{l-t_1+\cb d}\frac{\ku}{\kl^2}b_0^2\}$, it follows that
	\begin{align*}
		\| f_l(\Bbeta_{l}) -f_l(\Bbeta^*)-\EE\left[f_l(\Bbeta_{l}) -f_l(\Bbeta^*)\big| \Bbeta_{l}\right]\|_{\Psi_2}\leq C\bar{\tau}\sqrt{\frac{C^*d}{l-t_1+\cb d}\frac{\ku^2}{\kl^2} }b_0.
	\end{align*}
	By Theorem 2 in \cite{shamir2011variant} and equation~\eqref{eq:1-1}, with probability exceeding $1-c\exp(-cd)$, the third term in equation~\eqref{eq2} can be bounded by
	\begin{align*}
		&~~~\frac{b_0}{\kl}\sum_{l=t_1}^{t}\left(1-\frac{\ca}{12(l+1-t_1+\cb d)}\right)\cdots \left(1-\frac{\ca}{12(t-t_1+\cb d)}\right) \frac{\ca}{l-t_1+\cb d}\\&~~~~~~~~~~~~~~~~~~~~~~~~~~~~~~~~~~~~~~~~~~~~~~~~~~~~~~~~~~~~~~\times\left[f_l(\Bbeta_{l}) -f_l(\Bbeta^*)-\EE\left[f_l(\Bbeta_{l}) -f_l(\Bbeta^*)\big| \Bbeta_{l}\right] \right]\\
		&\leq \bar{\tau}\ca\frac{\ku}{\kl^2}\sqrt{\sum_{l=t_1}^{t}\left(\frac{l+1-t_1+\cb d}{t+1-t_1+\cb d} \right)^{\frac{\ca}{6}}\frac{1}{(l-t_1+\cb d)^2} \frac{C^*d^2}{l-t_1+\cb d}}b_0^2\\
		&=\bar{\tau}\ca\frac{\ku}{\kl^2}b_0^2\frac{\sqrt{C^*}d}{(t+1-t_1+\cb d)^{\frac{\ca}{12}}}\sqrt{\left(\frac{1+\cb d}{\cb d}\right)^{\frac{\ca}{6}}\sum_{l=t_1}^t (l-t_1+\cb d)^{\frac{\ca}{6}-3}}\\
		&\leq \bar{\tau}\ca\frac{\ku}{\kl^2}b_0^2\frac{\sqrt{C^*}d}{t+1-t_1+\cb d}\times \left(\frac{1+\cb d}{\cb d}\right)^{\frac{\ca}{12}}.
	\end{align*}
	Finally, combining the upper bounds of the three terms, we obtain
	\begin{align*}
		\ltwo{\Bbeta_{t+1}-\Bbeta^*}^2&\leq \frac{\cb d}{t+1-t_1+\cb d}\ltwo{\Bbeta_{t_1}-\Bbeta^*}^2+2\left(\frac{1+\cb d}{\cb d}\right)^{\frac{\ca}{12}}\frac{\ca^2}{ t+1-t_1+\cb d}\\
		&~~~~~~~~~~~~~~~~~~~~~~~~~~~~~~~~~~~~~~~~~~~~~~+\ca\frac{\ku}{\kl^2}b_0^2\frac{\sqrt{C^*}d}{t+1-t_1+\cb d}\times \left(\frac{1+\cb d}{\cb d}\right)^{\frac{\ca}{12}}\\
		&\leq C^*\frac{\ku}{\kl^2}b_0^2\frac{d}{t+1-t_1+\cb d},
	\end{align*}
	where $\{ (1+\cb d)/(\cb d\}^{\frac{\ca}{12}}<2^{\ca/12}$ is independent of $d$, and $C^*$ is a sufficiently large constant that may depend on  $\ca,\cb$. This completes the proof of Theorem~\ref{thm:one_sample}. 

	\section{Discussions}
	
	This paper focuses on online quantile regression in low-dimensional settings. The analytical framework developed herein can also be extended to the study of stochastic sub-gradient methods for low-rank regression under quantile loss. Let $\mathbf{M}^* \in \mathbb{R}^{d_1 \times d_2}$ denote the true low-rank matrix with $\operatorname{rank}(\mathbf{M}^*) = r$. At each time step $t$, we observe data $(Y_t, \mathbf{X}_t)$, where $Y_t = \inp{\mathbf{X}_t}{\mathbf{M}^*} + \xi_t$, and $\mathbf{X}_t \in \mathbb{R}^{d_1 \times d_2}$ is the sensing matrix. The corresponding loss function at time $t$ is defined as $f_t(\mathbf{M}) = \rho_{Q,\tau}(Y_t - \inp{\mathbf{X}_t}{\mathbf{M}})$. A central challenge in this setting is to maintain the low-rank structure of the iterates over time. To this end, the online Riemannian optimization framework updates the estimate according to
	$$
	\M_{t+1}=\operatorname{SVD}_r(\M_t-\eta_t\calP_{\TT_t}(\G_t)),
	$$
	where $\G_t\in\partial f_t(\M_t)$ is the sub-gradient, and $\calP_{\TT_t}(\cdot)$ denotes the projection onto the tangent space at $\mathbf{M}_t$. Detailed expressions for $\calP_{\TT_t}(\mathbf{G}_t)$ and further discussion of Riemannian optimization techniques can be found in \citet{vandereycken2013low} and \citet{mishra2014fixed}. Each iteration retains only the leading $r$ singular components, raising an important conceptual question: to what extent does this truncation preserve essential statistical information? A comprehensive theoretical investigation of online low-rank regression under quantile loss is beyond the scope of this paper and is left for future research.

	\section{Acknowledgement}
	The authors would like to thank Professor Genevera Allen and the two anonymous reviewers for their insightful comments and constructive suggestions that helped improve this work. Yinan Shen is the corresponding author and acknowledges support from the WiSE Supplemental Faculty Support program at the University of Southern California. Dong Xia's research was partially supported by the Hong Kong Research Grants Council under Grant GRF 16302323. Wen-Xin Zhou's research was supported by the National Science Foundation under Grant DMS-2401268 and by the Australian Research Council Discovery Project Grant DP230100147.

	\bibliographystyle{plainnat}
	\bibliography{reference}
	
	\newpage
	
	\appendix

	\begin{center}
		\textbf{\Large{Supplementary Material to ``Online Quantile Regression"}}
	\end{center}
	\section{Proof of Main Results}
	This section presents proofs of the main results for each setting.
	
	\subsection{Proof of Online Learning}
	In this subsection, we shall first prove Remark~\ref{rmk:one_sample_init}, and then prove the regret bound, which is a subsequent result of the expected estimation error rates.
	
	\subsubsection{Proof of Remark~\ref{rmk:one_sample_init}: Online Learning with Good Initialization}
	We are going to prove the convergence dynamics when $\ltwo{\Bbeta_{0}-\Bbeta^*}< 8\sqrt{\kl^{-1}}\gamma$. According to proof of Theorem~\ref{thm:one_sample}, we have
	\begin{equation}
		\begin{split}
			&~~~\ltwo{\Bbeta_{t+1}-\Bbeta^*}^2\leq\prod_{l=0}^{t}\left(1-\frac{\ca}{12(l+\cb d)}\right)\ltwo{\Bbeta_{0}-\Bbeta^*}^2\\
			&~~~+2b_0^2\bar{\tau}^2\frac{\ku}{\kl^2}d\sum_{l=0}^{t}\left(1-\frac{\ca}{12(l+1+\cb d)}\right)\cdots \left(1-\frac{\ca}{12(t+\cb d)}\right)\frac{\ca^2}{(l+\cb d)^2}\\
			&~~~-2\frac{b_0}{\kl} \sum_{l=0}^{t}\left(1-\frac{\ca}{12(l+1+\cb d)}\right)\cdots \left(1-\frac{\ca}{12(t+\cb d)}\right) \frac{\ca}{l+\cb d}\\&~~~~~~~~~~\times\left[f_l(\Bbeta_{l}) -f_l(\Bbeta^*)-\EE\left[f_l(\Bbeta_{l}) -f_l(\Bbeta^*)\big| \Bbeta_{l}\right] \right],
		\end{split}
		\label{eq1}
	\end{equation}
	also, the following upper bound of the product series holds
	\begin{align*}
		\left(1-\frac{\ca}{12(l+\cb d)}\right)\cdots \left(1-\frac{\ca}{12(t+\cb d)}\right)
		\leq\left(\frac{l+1+\cb d}{t+1+\cb d}\right)^{\frac{\ca}{12}}.
	\end{align*}
	\noindent\underline{\underline{Case One: large $\ca$}} If $\ca>12$, the analyses are exactly the ones in Theorem~\ref{thm:one_sample} and the upper bound is
	\begin{align*}
		\ltwo{\Bbeta_{t+1}-\Bbeta^*}^2\leq C^*\frac{\ku}{\kl^2}\frac{d}{t+1-t_1+\cb d} b_0^2.
	\end{align*}
	\noindent\underline{\underline{Case One: small $\ca$}} If $\ca<12$, the first term of equation~\eqref{eq1} is bounded with
	\begin{align*}
		\prod_{l=0}^{t}\left(1-\frac{\ca}{12(l+\cb d)}\right)\ltwo{\Bbeta_{0}-\Bbeta^*}^2\leq \left(\frac{1+\cb d}{t+1+\cb}\right)^{\frac{\ca}{12}}.
	\end{align*}
	In addition, the second term of equation~\eqref{eq1} has the following upper bound,
	\begin{align*}
		&~~~~b_0^2\bar{\tau}^2\frac{\ku}{\kl^2}d\sum_{l=0}^{t}\left(1-\frac{\ca}{12(l+1+\cb d)}\right)\cdots \left(1-\frac{\ca}{12(t+\cb d)}\right)\frac{\ca^2}{(l+\cb d)^2}\\
		&\leq b_0^2\bar{\tau}^2\frac{\ku}{\kl^2}d\sum_{l=0}^{t} \left(\frac{l+1+\cb d}{t+1+\cb d}\right)^{\frac{\ca}{12}}\frac{\ca^2}{(l+\cb d)^2}\\
		&\leq  b_0^2\bar{\tau}^2\frac{\ku}{\kl^2}d\frac{\ca^2}{(t+1+\cb d)^{\frac{\ca}{12}}}\sum_{l=0}^{t}\frac{1}{(l+1+\cb d)^{2-\frac{\ca}{12}}}\\
		&\leq 4 b_0^2\bar{\tau}^2\frac{\ku}{\kl^2}\frac{ d^{\frac{\ca}{12}}\ca^2}{(t+1+\cb d)^{\frac{\ca}{12}}},
	\end{align*}
	where the last line uses $\cb\geq 1$ and $$\sum_{l=0}^{t}\frac{1}{(l+1+\cb d)^{2-\frac{\ca}{12}}}\leq\int_{0}^{t+1}\frac{1}{(x+ \cb d)^{2-\frac{\ca}{12}} }\; dx\leq\frac{1}{1-\ca/12}\frac{1}{(\cb d)^{1-\frac{\ca}{12}}}.$$ It is worth noting that, under the event of $\cap_{l=0}^t \{\ltwo{\Bbeta_{l}-\Bbeta^*}^2\leq C^*(\frac{d}{l+\cb d})^{\frac{\ca}{12}} \ltwo{\Bbeta_{0}-\Bbeta^*}^2\}$, it has 
	\begin{align*}
		\|f_l(\Bbeta_{l}) -f_l(\Bbeta^*)-\EE\left[f_l(\Bbeta_{l}) -f_l(\Bbeta^*)\big| \Bbeta_{l}\right] \|_{\psi_2} \leq \bar{\tau}\sqrt{\ku C^*\left(\frac{d}{l+\cb d}\right)^{\frac{\ca}{12}} \ltwo{\Bbeta_{0}-\Bbeta^*}^2}.
	\end{align*}
	Thus according to Theorem 2 in \cite{shamir2011variant}, the upper bound of the last term for equation~\eqref{eq1} is obtained with probability exceeding $1-\exp(-cd)$,
	\begin{align*}
		&~~~\frac{b_0}{\kl} \sum_{l=0}^{t}\left(1-\frac{\ca}{12(l+1+\cb d)}\right)\cdots \left(1-\frac{\ca}{12(t+\cb d)}\right) \frac{\ca}{l+\cb d}\\
		&~~~~~~~~~~~~~~~~~~~~~~~~~~~~~~~~~~~~~~~~~~~~~~~~~~~~~~~~~~~~~~~~\times\left[f_l(\Bbeta_{l}) -f_l(\Bbeta^*)-\EE\left[f_l(\Bbeta_{l}) -f_l(\Bbeta^*)\big| \Bbeta_{l}\right] \right]\\
		&\leq \bar{\tau}\ca\frac{b_0}{\kl}\sqrt{C^*\ku d \sum_{l=0}^t \left(\frac{l+1+\cb d}{t+1+\cb d}\right)^{\frac{\ca}{6} } \frac{1}{(l+\cb d)^2} \left(\frac{d}{l+\cb d}\right)^{\frac{\ca}{12}} \ltwo{\Bbeta_{0}-\Bbeta^*}^2}\\
		&\leq \bar{\tau}\sqrt{C^*}\sqrt{\ku}\frac{1}{\kl}\frac{d^{\frac{\ca }{12}}}{(t+1+\cb d)^{\frac{\ca}{12}}}\cdot\ca b_0\ltwo{\Bbeta_{0}-\Bbeta^*}.
	\end{align*}
	Thus altogether, by having $\ca<\min\{12, (\kl/\sqrt{\ku})\ltwo{\Bbeta_{0}-\Bbeta^*}/b_0\}$, we obtain the following bound for equation~\eqref{eq1} with some sufficient large constant $C^*$, where $C^*$ may depend on the constants $\ca,\cb$,
	\begin{align*}
		\ltwo{\Bbeta_{t+1}-\Bbeta^*}^2\leq C^*\left(\frac{d}{t+1+\cb d}\right)^{\frac{\ca}{12}} \ltwo{\Bbeta_{0}-\Bbeta^*}^2.
	\end{align*}

	\subsubsection{Proof of Theorem~\ref{thm:one_sample_regret}}
	Before proving the regret bound, we first prove the following lemma, which establishes the expected error rate dynamics. It is worth noting that it is not a consequence of the empirical dynamics Theorem~\ref{thm:one_sample}.
	\begin{lemma}
		Assume the same conditions and stepsizes as Theorem~\ref{thm:one_sample}. Then in the first phase we have $\sum_{t\leq t_1}\EE\ltwo{\Bbeta_{t}-\Bbeta^*}\leq C\max\{\tau^2,(1-\tau)^2\}d\frac{\ku}{\kl}D_0+Ct_1\exp(-cd)D_0$ and in the second phase we have $\EE\ltwo{\Bbeta_{t+1}-\Bbeta^*}^2\leq C\frac{\ku}{\kl^2}\frac{d}{t+1-t_1+\cb d} b_0^2$.
		\label{lem:one_sampe_expectation}
	\end{lemma}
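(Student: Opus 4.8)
The plan is to treat the two phases by separate arguments, because—as the sentence preceding the lemma stresses—the high-probability dynamics of Theorem~\ref{thm:one_sample} do not by themselves control the \emph{expectation}: on the exponentially small failure event the iterate could in principle be far from $\Bbeta^*$. The common starting point is the one-step identity \eqref{eq:update one sample}. Taking conditional expectation given $\Bbeta_t$, using convexity in the form $\inp{\Bbeta_t-\Bbeta^*}{\EE[\g_t|\Bbeta_t]}\geq\EE[f_t(\Bbeta_t)-f_t(\Bbeta^*)|\Bbeta_t]$, and the \emph{moment} bound $\EE[\ltwo{\g_t}^2|\Bbeta_t]\leq\bar{\tau}^2\ku d$ (which follows from $\ltwo{\g_t}\leq\bar{\tau}\ltwo{\X_t}$ and $\EE\ltwo{\X_t}^2=\mathrm{tr}(\bSigma)\leq\ku d$, so no high-probability event is needed), I obtain a deterministic recursion for $\EE\ltwo{\Bbeta_t-\Bbeta^*}^2$. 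The two lower bounds on the expected excess risk, Lemma~\ref{teclem:lowerboundofESR} (linear, far regime) and Lemma~\ref{teclem:loss_expectation} (quadratic, near regime), then drive the two phases.

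For the first phase I would first record a crude a priori bound. Dropping the nonnegative part of the linear lower bound gives $\EE[f_t(\Bbeta_t)-f_t(\Bbeta^*)|\Bbeta_t]\geq-\gamma$, hence $\EE\ltwo{\Bbeta_{t+1}-\Bbeta^*}^2\leq\EE\ltwo{\Bbeta_t-\Bbeta^*}^2+2\eta_t\gamma+\eta_t^2\bar{\tau}^2\ku d$. Since the phase-one stepsizes decay geometrically, $\sum_s\eta_s$ and $\sum_s\eta_s^2$ are summable, and using $\gamma\lesssim\sqrt{\kl}D_0$ on the phase-one region (where $\ltwo{\Bbeta_0-\Bbeta^*}\geq 8\kl^{-1/2}\gamma$) this yields $\EE\ltwo{\Bbeta_t-\Bbeta^*}^2\leq CD_0^2$ for all $t\leq t_1$. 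To get the claimed sum I then split each term as $\EE\ltwo{\Bbeta_t-\Bbeta^*}=\EE[\ltwo{\Bbeta_t-\Bbeta^*}\mathbb{I}_{\mathcal{E}_t}]+\EE[\ltwo{\Bbeta_t-\Bbeta^*}\mathbb{I}_{\mathcal{E}_t^c}]$, where $\mathcal{E}_t$ is the phase-one success event of Theorem~\ref{thm:one_sample}, so that $\PP(\mathcal{E}_t^c)\leq t\exp(-c_0 d)$ by a union bound. On $\mathcal{E}_t$ one has $\ltwo{\Bbeta_t-\Bbeta^*}\leq C^* D_t$, whose geometric sum over $t\leq t_1$ is $O(\bar{\tau}^2(\ku/\kl)d\,D_0)$, producing the main term; on $\mathcal{E}_t^c$, Cauchy--Schwarz together with the crude second moment gives $\leq\sqrt{C}D_0\sqrt{t\exp(-c_0 d)}$, whose sum is absorbed into $Ct_1\exp(-cd)D_0$.

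For the second phase the good/bad-event split is \emph{not} available, since the failure probability grows like $t\exp(-c_0 d)$ while the target decays like $1/t$, so the residual would not vanish. I therefore argue entirely through the expected recursion. Convexity and the quadratic lower bound $\EE[f_t(\Bbeta_t)-f_t(\Bbeta^*)|\Bbeta_t]\geq(\kl/12b_0)\ltwo{\Bbeta_t-\Bbeta^*}^2$ give
\begin{align*}
\EE\ltwo{\Bbeta_{t+1}-\Bbeta^*}^2\leq\Big(1-\tfrac{\kl}{6b_0}\eta_t\Big)\EE\ltwo{\Bbeta_t-\Bbeta^*}^2+\eta_t^2\bar{\tau}^2\ku d .
\end{align*}
Substituting $\eta_t=\frac{\ca}{t-t_1+\cb d}\frac{b_0}{\kl}$ turns the contraction factor into $1-\frac{\ca}{6(t-t_1+\cb d)}$ and the additive term into $O\big(\frac{\bar{\tau}^2\ca^2}{(t-t_1+\cb d)^2}\frac{\ku}{\kl^2}b_0^2 d\big)$; since $\ca>12$ forces $\ca/6>2$, the standard $1/t$-recursion estimate yields $\EE\ltwo{\Bbeta_{t+1}-\Bbeta^*}^2\leq C\frac{\ku}{\kl^2}\frac{d}{t+1-t_1+\cb d}b_0^2$, exactly as claimed.

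The main obstacle is the legitimacy of the quadratic lower bound inside this recursion: Lemma~\ref{teclem:loss_expectation} is only valid in the phase-two region $\ltwo{\Bbeta_t-\Bbeta^*}<8\kl^{-1/2}\gamma$, whereas the expectation ranges over all sample paths, including rare excursions out of that region. I would close this gap by running the recursion on the \emph{stopped} process that freezes once an iterate leaves phase two, verifying that its frozen second moment obeys the displayed contraction, and then controlling the excursion contribution via the crude second moment and the exponentially small escape probability. A cleaner alternative is to replace Lemma~\ref{teclem:loss_expectation} by a single globally valid bound of the form $\EE[f_t(\Bbeta)-f_t(\Bbeta^*)|\Bbeta]\gtrsim\ltwo{\Bbeta-\Bbeta^*}^2/(\ltwo{\Bbeta-\Bbeta^*}+\gamma)$, which reduces to the quadratic bound near $\Bbeta^*$ yet still forces contraction far away, turning the recursion into an unconditional supermartingale estimate. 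Checking that one of these devices preserves the $1/t$ rate is the technical heart of the argument.
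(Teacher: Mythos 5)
Your proposal follows essentially the same route as the paper's own proof: the one-step identity \eqref{eq:update one sample} with conditional expectation and the moment bound $\EE[\ltwo{\g_t}^2\mid\Bbeta_t]\leq\bar\tau^2\ku d$; in phase one a crude a priori second-moment bound ($\EE\ltwo{\Bbeta_t-\Bbeta^*}^2\leq CD_0^2$) combined with a split over the success event of Theorem~\ref{thm:one_sample} and its complement; and in phase two an unconditional recursion driven by the quadratic lower bound on the expected excess risk and the $O(1/t)$ stepsize, closed by the standard $1/t$ induction. The one substantive difference is that you explicitly flag that the quadratic lower bound $\EE[f_t(\Bbeta)-f_t(\Bbeta^*)\mid\Bbeta]\geq(\kl/12b_0)\ltwo{\Bbeta-\Bbeta^*}^2$ is only established in a neighborhood of $\Bbeta^*$, whereas the phase-two expectation integrates over all sample paths; the paper's proof applies this bound unconditionally without comment. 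Your proposed remedies (a stopped-process argument, or the globally valid interpolated lower bound of order $\ltwo{\Bbeta-\Bbeta^*}^2/(\ltwo{\Bbeta-\Bbeta^*}+\gamma)$) are sound and would make the step rigorous, so on this point your write-up is actually more careful than the paper's.
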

	\begin{proof}
		Note that the expectation of $\EE\left[\ltwo{\Bbeta_{t+1}-\Bbeta^*}^2\right]$ is
		\begin{align*}
			\EE\left[\ltwo{\Bbeta_{t+1}-\Bbeta^*}^2\right]&=\EE\left[\EE\left[\ltwo{\Bbeta_{t+1}-\Bbeta^*}^2\big| \Bbeta_{t}\right]\right]\\
			&=\EE\left[\EE\left[\ltwo{\Bbeta_{t}-\Bbeta^*}^2-2\eta_t\inp{\Bbeta_{t}-\Bbeta^*}{\g_t}+\eta_t^2\ltwo{\g_t}^2\big| \Bbeta_{t}\right]\right].
		\end{align*}
		First, we would consider the inner conditional expectation. According to the sub-gradient definition, and the independence between $\eta_t$ and $(\X_{t},Y_{t})$, we have
		\begin{multline*}
			\EE\left[\ltwo{\Bbeta_{t}-\Bbeta^*}^2-2\eta_t\inp{\Bbeta_{t}-\Bbeta^*}{\g_t}+\eta_t^2\ltwo{\g_t}^2\big| \Bbeta_{t}\right]\\\leq\ltwo{\Bbeta_{t}-\Bbeta^*}^2-2\eta_t\EE\left[f_t(\Bbeta_t)-f_t(\Bbeta^*)\big|\Bbeta_{t}\right]+\eta_t^2 \EE\left[\ltwo{\g_t}^2\big| \Bbeta_{t}\right].
		\end{multline*}
		\medskip
		\noindent
		{\sc First Phase Analysis}.	We inherit the notations in Section~\ref{sec:proof_online}. Denote the event $\calE_{t}:=\{\|\Bbeta_t-\Bbeta^*\|_2\leq CD_t\}$ and Theorem~\ref{thm:one_sample} shows $\PP(\cup_{t=0,\dots,t_1}\calE_t^{{\rm c}})\leq t_1\exp(-c_0d)$. Then we have 
		\begin{equation}
			\begin{split}
				\EE\|\Bbeta_{t+1}-\Bbeta^*\|_2&=\EE\left\{\|\Bbeta_{t+1}-\Bbeta^*\|_2\cdot\II\{\calE_{t+1}\}\right\}+\EE\left\{\|\Bbeta_{t+1}-\Bbeta^*\|_2\cdot\II\{\calE_{t+1}^{{\rm c}}\}\right\}\\
				&\leq CD_{t+1}+\PP\left( \calE_{t+1}^{{\rm c}}\right)\cdot\left(\EE\|\Bbeta_{t+1}-\Bbeta^*\|_2^2\right)^{\frac{1}{2}}.
			\end{split}
			\label{eq:1.1.2}
		\end{equation}
		Hence we only need to provide a rough upper bound for $\EE\|\Bbeta_t-\Bbeta^*\|^2$. Note that $\EE[f_t(\Bbeta_t)-f_t(\Bbeta^*)\big|\Bbeta_{t}]\geq 0$ and therefore, we have
		\begin{align*}
			\EE\left[\ltwo{\Bbeta_{t+1}-\Bbeta^*}^2\right]&\leq \EE\left[\|\Bbeta_t-\Bbeta^*\|^2\right]+\eta_t^2\cdot\max\{\tau,1-\tau\}^2\ku d\\
			&\leq\EE\left[\|\Bbeta_t-\Bbeta^*\|^2\right]+\frac{C}{\max\{\tau,1-\tau\}^2}\frac{\kl}{\ku} \frac{1}{d}D_t^2,
		\end{align*}
		which implies $$\EE\left[\ltwo{\Bbeta_{t+1}-\Bbeta^*}^2\right]\leq \|\Bbeta_0-\Bbeta^*\|_2^2+ \frac{C}{\max\{\tau,1-\tau\}^2}\frac{\kl}{\ku} \frac{1}{d}\sum_{l=0}^tD_l^2\leq CD_0^2.$$ Inserting the inequality above into equation~\eqref{eq:1.1.2}, we obtain 
		\begin{align*}
			\EE\|\Bbeta_{t+1}-\Bbeta^*\|_2\leq CD_{t+1}+ \PP\left( \calE_{t+1}^{{\rm c}}\right)D_0.
		\end{align*}
		Thus we have
		\begin{align*}
			\sum_{t=0}^{t_1}\EE\|\Bbeta_{t+1}-\Bbeta^*\|_2&\leq \sum_{t=0}^{t_1}D_t+C\PP(\cup_{t=0,\dots,t_1}\calE_t^{\rm c})D_0\\
			&\leq C\max\{\tau^2,(1-\tau)^2\}d\frac{\ku}{\kl}D_0+Ct_1\exp(-cd)D_0.
		\end{align*}
		
		\medskip
		\noindent
		{\sc Second Phase Analysis}. 
		Lemma~\ref{teclem:loss_expectation} shows that in the second phase, the lower bound of the expected excess risk is a second-order one, $\EE[f_t(\Bbeta_{t}) -f_t(\Bbeta^*)|\Bbeta_{t}]\geq \frac{1}{12b_0}\kl\ltwo{\Bbeta_{t}-\Bbeta^*}^2$. Thus we have
		\begin{align*}
			\EE\left[\ltwo{\Bbeta_{t+1}-\Bbeta^*}^2\big| \Bbeta_{t}\right]&\leq \ltwo{\Bbeta_{t}-\Bbeta^*}^2-\frac{1}{6b_0}\eta_t\kl\ltwo{\Bbeta_{t}-\Bbeta^*}^2+\eta_t^2\ku\max\{\tau^2,(1-\tau)^2\}d.
		\end{align*}
		By specifying the stepsize $\eta_t=\frac{\ca}{t-t_1+\cb d}\frac{b_0}{\kl}$, we obtain
		\begin{align*}
			\EE\left[\ltwo{\Bbeta_{t+1}-\Bbeta^*}^2\big| \Bbeta_{t}\right]&\leq\left(1-\frac{\ca}{6(t-t_1+\cb d)}\right)\ltwo{\Bbeta_{t}-\Bbeta^*}^2+\frac{\ku}{\kl^2}\frac{\ca^2}{(t-t_1+\cb d)^2}db_0^2.
		\end{align*}
		Taking the expectation over $\Bbeta_{t}$ on both sides of the equation and substituting the upper bound at time $t$, $\EE[\ltwo{\Bbeta_{t}-\Bbeta^*}^2]\leq C\frac{\ku}{\kl^2}\frac{\ca d}{t-t_1+\cb d}b_0^2$, into the above equation, we obtain
		\begin{align*}
			&~~~~~\EE\left[\ltwo{\Bbeta_{t+1}-\Bbeta^*}^2\right]\\
			&\leq C\left(1-\frac{\ca}{6(t-t_1+\cb d)}\right)\frac{\ku}{\kl^2}\frac{\ca d}{t-t_1+\cb d}b_0^2+\frac{\ku}{\kl^2}\frac{\ca^2}{(t-t_1+\cb d)^2}db_0^2\\
			&=C\left(1-\frac{\ca/2}{6(t-t_1+\cb d)}\right)\frac{\ku}{\kl^2}\frac{\ca d}{t-t_1+\cb d}b_0^2.
		\end{align*}
		Note that $1-\frac{\ca/2}{6(t-t_1+\cb d)}\leq1-\frac{1}{t-t_1+\cb d}\leq \frac{t-t_1+\cb d}{t+1-t_1+\cb d}$, and hence we finally obtain $\EE[\ltwo{\Bbeta_{t+1}-\Bbeta^*}^2 ]\leq C\frac{\ku}{\kl^2}\frac{\ca d}{t+1-t_1+\cb d}b_0^2$, which completes the proof for the second phase.
	\end{proof}

	We are now ready to establish the regret bound stated in Theorem~\ref{thm:one_sample_regret}.
	By the expected excess risk bound in Lemma~\ref{teclem:loss_expectation}, we have
	$\Bbeta^*=\arg\max_{\Bbeta}\EE\sum_{t=0}^{T} f_{t}(\Bbeta_{t})-f_t(\Bbeta)$.  In the first phase, it has
	\begin{align*}
		& \sum_{t=0}^{t_1}\EE\left[f_t(\Bbeta_{t})-f_t(\Bbeta^*)\right] \\
		&=\sum_{t=0}^{t_1}\EE\left[\EE\left[f_t(\Bbeta_{t})-f_t(\Bbeta^*)\big|\Bbeta_t\right]\right]\leq\max\{\tau,1-\tau\}\sqrt{\ku}\sum_{t=0}^{t_1}\EE\ltwo{\Bbeta_{t}-\Bbeta^*}\\
		&\leq C\max\{\tau^3,(1-\tau)^3\}\sqrt{\ku}d\frac{\ku}{\kl}D_0+\max\{\tau,1-\tau\}\sqrt{\ku}t_1\exp(-cd) ,
	\end{align*}
	where we use the Lipschitz continuity of $f_t(\cdot)$, and the final inequality follows from Lemma~\ref{lem:one_sampe_expectation}. For the second phase ($t>t_1$), we have
	\begin{align*}
		\EE\left[f_t(\Bbeta_{t})-f_t(\Bbeta^*)\right]&=\EE\left[\EE\left[f_t(\Bbeta_{t})-f_t(\Bbeta^*)\big|\Bbeta_t\right]\right]\leq\frac{\ku}{b_1}\EE\ltwo{\Bbeta_{t}-\Bbeta^*}^2\\
		&\leq 2\frac{\ku^2}{\kl^2}\frac{d}{t-t_1+\cb d}b_0^2,
	\end{align*}
	where the first inequality follows from Lemma~\ref{teclem:loss_expectation}, and the second from Lemma~\ref{lem:one_sampe_expectation}. The proof is thus complete.

	\subsection{Proof of Batch Learning}

	In this subsection, we develop the batch learning framework in a sequential manner. We begin with the proof of Theorem~\ref{thm:Batch}, proceed to analyze convergence under well-controlled initialization, and conclude with the proof of Theorem~\ref{thm:Batch regret}.

	\subsubsection{Proof of Theorem~\ref{thm:Batch}}
	
	The update rule $\Bbeta_{t+1}=\Bbeta_{t}-\eta_t\g_t$ ensures that 
	\begin{align}
		\ltwo{\Bbeta_{t+1}-\Bbeta^*}^2
		&= \ltwo{\Bbeta_t-\Bbeta^*}^2-2\eta_t\inp{\Bbeta_t-\Bbeta^*}{\g_t}+\eta_t^2\ltwo{\g_t}^2.
		\label{eq:batch_update}
	\end{align}
	The behavior of the inner product term $\inp{\Bbeta_t-\Bbeta^*}{\g_t}$ and the gradient norm $\ltwo{\g_t}^2$ varies significantly depending on the proximity of the current iterate $\Bbeta_t$ to the true parameter $\Bbeta^*$. Therefore, we analyze these quantities by distinguishing between regimes where $\Bbeta_t$ is either close to or far from $\Bbeta^*$. 
	
	\medskip
	\noindent
	{\sc First Phase Analysis.}
	We shall prove the convergence dynamics by induction. Initially, it holds obviously for $\Bbeta_0$. We continue to assume for the iteration of $t$, it has $\ltwo{\Bbeta_{t}-\Bbeta^*}\leq (1-\frac{1}{100}\frac{\kl}{\ku})^{t}\cdot D_0$ and we are going to prove the convergence dynamics at $t+1$. For convenience, we denote $$D_t:=(1-\frac{1}{100}\frac{\kl}{\ku})^{t}\cdot D_0, \quad D_{t+1}:=(1-\frac{1}{100}\frac{\kl}{\ku})^{t+1}\cdot D_0.$$ We proceed to assume the event
	\begin{multline*}
		\bcalE_t:=\left\{\sup_{\Bbeta_1,\Bbeta_2\in\RR^d} \left|f_t(\Bbeta_1)-f_t(\Bbeta_2)-\EE\left[ f_t(\Bbeta_1)-f_t(\Bbeta_2)\right]\right| \right.\\\left. \cdot\ltwo{\Bbeta_1-\Bbeta_2}^{-1}\leq C_1\max\{\tau,1-\tau\}\sqrt{\frac{\ku d}{n_t}}\right\}
	\end{multline*}
	holds. Specifically, Proposition~\ref{tecprop:empirical} establishes that $\PP(\bcalE_t)\geq1-\exp(-C_2d)-\exp(-\sqrt{n_t/\log n_t})$. Under the event $\bcalE_t$, the intermediate term in \eqref{eq:batch_update} can be lower bounded as
	$$
	\inp{\Bbeta_t-\Bbeta^*}{\g_t}\geq f_t(\Bbeta_t)-f_t(\Bbeta^*)\geq \EE\left[f_t(\Bbeta_t)-f_t(\Bbeta^*)\big|\Bbeta_t \right]-C_1\max\{\tau,1-\tau\}\sqrt{\frac{\ku d}{n_t}}\ltwo{\Bbeta_t-\Bbeta^*},
	$$
	where the first inequality follows from the sub-gradient definition, and the second holds on the event $\bcalE_t$. Additionally, the convexity of the quantile loss, combined with  Lemma~\ref{teclem:gaussian_expecation}, yields
	\begin{align*}
		\EE\left[f_t(\Bbeta_t)-f_t(\Bbeta^*)\big|\Bbeta_t \right]&\geq \frac{1}{n_t}\sum_{i=1}^{n_t}\EE\left[\rho_{Q,\tau}(\inp{\X_i^{(t)}}{\Bbeta^*-\Bbeta_t})-\rho_{Q,\tau}(-\xi_i)-\rho_{Q,\tau}(\xi_i)\big| \Bbeta_t\right]\\
		&\geq \frac{1}{\sqrt{2\pi}} \sqrt{\kl}\ltwo{\Bbeta_t-\Bbeta^*}-\gamma.
	\end{align*}
	Thus, if the batch size satisfies $n_t\geq C(\ku/\kl)\max\{\tau^2,(1-\tau)^2\} d$, and the iterate lies in the outer phase region $\ltwo{\Bbeta_t-\Bbeta^*}\geq8\sqrt{\kl^{-1}}\gamma$, it holds that
	\begin{align*}
		\inp{\Bbeta_t-\Bbeta^*}{\g_t}\geq \frac{1}{6} \sqrt{\kl}\ltwo{\Bbeta_t-\Bbeta^*}.
	\end{align*}
	Moreover, Lemma~\ref{teclem:sub-gradient} shows that under the event $\bcalE_t$, $\ltwo{\g_t}\leq\sqrt{\ku}$. Combining these, the update equation~\eqref{eq:batch_update} can be upper bounded as
	\begin{align*}
		\ltwo{\Bbeta_{t+1}-\Bbeta^*}^2&\leq \ltwo{\Bbeta_{t}-\Bbeta^*}^2- \frac{1}{3}\eta_t\sqrt{\kl}\ltwo{\Bbeta_t-\Bbeta^*}+ \eta_t^2\ku.
	\end{align*}
	We now treat the right-hand side of the above inequality as a quadratic function in $\ltwo{\Bbeta_t-\Bbeta^*}$. Using the inductive assumption $\ltwo{\Bbeta_t-\Bbeta^*}\leq D_{t}$, we obtain
	\begin{align*}
		\ltwo{\Bbeta_{t+1}-\Bbeta^*}^2&\leq D_t^2- \frac{1}{3}\eta_t\sqrt{\kl}D_t+ \eta_t^2\ku.
	\end{align*}
	The stepsize $\eta_t=(1-\frac{1}{100}\frac{\kl}{\ku})^{t}\eta_0\in \frac{\sqrt{\kl}}{\ku}D_t\cdot [1/8,5/24]$ implies that
	\begin{align*}
		\ltwo{\Bbeta_{t+1}-\Bbeta^*}^2\leq \left(1-\frac{1}{50}\frac{\kl}{\ku}\right)D_t^2,
	\end{align*}
	which leads to the inductive conclusion $\ltwo{\Bbeta_{t+1}-\Bbeta^*}\leq (1-\frac{1}{100}\frac{\kl}{\ku})^{t+1}D_0=D_{t+1}$.

	\medskip
	\noindent
	{\sc Second Phase Analysis}.
	In the second phase, we shall still assume event $\bcalE_t$ holds, which is defined and discussed in the first phase analyses. Note that according to Lemma~\ref{teclem:loss_expectation}, in this case, where $\ltwo{\Bbeta_t-\Bbeta^*}\leq 8\sqrt{\kl^{-1}}\gamma$, the expectation of excess risk has a quadratic lower bound,
	\begin{align*}
		\EE\left[f_t(\Bbeta_t)-f_t(\Bbeta^*)\big|\Bbeta_t \right]
		&\geq \frac{1}{12b_0}\kl\ltwo{\Bbeta_t-\Bbeta^*}^2.
	\end{align*}
	The second phase region has the constraints $\ltwo{\Bbeta_{t}-\Bbeta^*}\geq C\max\{\tau,1-\tau\} \sqrt{(\ku/\kl^2)d/n}b_0$. Then under event $\bcalE_t$, we have
	\begin{align*}
		f_t(\Bbeta_t)-f_t(\Bbeta^*)&\geq \frac{1}{12b_0}\kl\ltwo{\Bbeta_t-\Bbeta^*}^2-C_1\max\{\tau,1-\tau\}\sqrt{\ku d/n_t}\ltwo{\Bbeta_t-\Bbeta^*}\\
		&\geq \frac{1}{24b_0}\kl\ltwo{\Bbeta_t-\Bbeta^*}^2.
	\end{align*}
	In addition, Lemma~\ref{teclem:sub-gradient} proves that under the event $\bcalE_t$, the following holds in the second phase region:
	$$\ltwo{\g_t}^2\leq 4\frac{1}{b_1^2}\ku^2\ltwo{\Bbeta-\Bbeta^*}^2.$$
	Then together with the sub-gradient definition $\inp{\g_t}{\Bbeta_t-\Bbeta^*}\geq f_t(\Bbeta_t)-f_t(\Bbeta^*)$, equation~\eqref{eq:batch_update} can be upper bounded as
	\begin{align*}
		\ltwo{\Bbeta_{t+1}-\Bbeta^*}^2&\leq\ltwo{\Bbeta_{t}-\Bbeta^*}^2-\eta_t\frac{1}{12b_0}\kl\ltwo{\Bbeta_{t}-\Bbeta^*}^2+\eta_t^2 \frac{4}{b_1^2}\ku^2\ltwo{\Bbeta-\Bbeta^*}^2.
	\end{align*}
	By having stepsize $\eta_t\in \frac{\kl}{\ku^2}\frac{b_1^2}{b_0}\cdot [c_1,c_2]$, we obtain
	$$\ltwo{\Bbeta_{t+1}-\Bbeta^*}^2\leq\left(1-0.0005\frac{b_1^2}{b_0^2}\frac{\kl^2}{\ku^2}\right)\ltwo{\Bbeta_{t}-\Bbeta^*}^2.$$
	
	\medskip
	\noindent
	{\sc Third Phase Analysis}. We shall prove the convergence dynamics by induction. The proof at $t_2$ is trivial, which can be obtained similarly to the following analyses, and hence it is skipped. Then we are going to prove $\ltwo{\Bbeta_{t+1}-\Bbeta^*}$ when the desired inequality holds for $t_2,\dots,t$. It is worth noting that Lemma~\ref{teclem:sub-gradient} and Proposition~\ref{tecprop:empirical} show: in the region of $\ltwo{\Bbeta_t-\Bbeta^*}\leq C_1\max\{\tau,1-\tau\}\sqrt{(\ku/\kl^2)d/n}\cdot b_0$,  with probability exceeding $1-\exp(-C_1d)-\exp(-\sqrt{n_t/\log n_t})$, it has $$\ltwo{\g_t}^2\leq C_3^2\bar{\tau}^2\ku\frac{d}{n}\frac{\ku^2b_0^2}{\kl^2b_1^2},$$
	where we denote $\bar{\tau}:=\max\{\tau,1-\tau\}$, for convenience. According to the loss function expectation calculations in Lemma~\ref{teclem:loss_expectation}, we have
	\begin{align*}
		\EE\left[f_t(\Bbeta_{t})-f_t(\Bbeta^*)\big| \Bbeta_{t}\right]&\geq \frac{1}{24b_0}\ltwo{\Bbeta_{t}-\Bbeta^*}^2.
	\end{align*}
	Then together with the sub-gradient definition, we have
	\begin{align*}
		\ltwo{\Bbeta_{t+1}-\Bbeta^*}^2&\leq \ltwo{\Bbeta_{t}-\Bbeta^*}^2-\eta_t\frac{1}{12b_0}\kl\ltwo{\Bbeta_{t}-\Bbeta^*}^2+C_3^2\eta_t^2\bar{\tau}^2\ku\frac{d}{n}\frac{\ku^2b_0^2}{\kl^2b_1^2}\\
		&~~~~~~~~~~~~~~~~~~~+2\eta_t\left[f_t(\Bbeta_t)-f_t(\Bbeta^*)-\EE\left[f_t(\Bbeta_t)-f_t(\Bbeta^*)\big|\Bbeta_{t} \right]\right].
	\end{align*}
	Insert the stepsize $\eta_t=\frac{\ca}{t-t_2+\cb}\frac{b_0}{\kl}$  into the above equation and then it arrives at
	\begin{align*}
		\ltwo{\Bbeta_{t+1}-\Bbeta^*}^2&\leq \left(1-\frac{\ca}{12(t-t_2+\cb)}\right)\ltwo{\Bbeta_{t}-\Bbeta^*}^2+C_3^2\bar{\tau}^2\frac{\ca^2}{(t-t_2+\cb)^2}\frac{\ku}{\kl^2}\frac{d}{n}\frac{\ku^2b_0^2}{\kl^2b_1^2}b_0^2\\
		&~~~~~~~~~~~~~~~~~~~+2\frac{b_0}{\kl}\frac{\ca}{t-t_2+\cb}\left[f_t(\Bbeta_t)-f_t(\Bbeta^*)-\EE\left[f_t(\Bbeta_t)-f_t(\Bbeta^*)\big|\Bbeta_{t} \right]\right].
	\end{align*}
	Applying the above bound repeatedly from $t_2$, we have
	\begin{equation}
		\begin{split}
			&~~~~\ltwo{\Bbeta_{t+1}-\Bbeta^*}^2\\
			&\leq\prod_{l=t_2}^{t}\left(1-\frac{\ca}{12(l-t_2+\cb)}\right)\ltwo{\Bbeta_{t_2}-\Bbeta^*}^2\\
			&+C_3^2\bar{\tau}^2\frac{\ku}{\kl^2}\frac{\ku^2b_0^2}{\kl^2b_1^2}\frac{d}{n}b_0^2\sum_{l=t_2}^{t}\left(1-\frac{\ca}{12(l+1-t_2+\cb)}\right)\cdots\left(1-\frac{\ca}{12(t-t_2+\cb)}\right) \frac{\ca^2}{(l-t_2+\cb)^2}\\
			&+\frac{b_0}{\kl}\sum_{l=t_2}^{t}\left(1-\frac{\ca}{12(l+1-t_2+\cb)}\right)\cdots\left(1-\frac{\ca}{12(t-t_2+\cb)}\right)\frac{\ca}{l-t_2+\cb}\\&~~~~~~~~~~~~~~~~~~~~~~~~~~ \times\left[f_l(\Bbeta_l)-f_l(\Bbeta^*)-\EE\left[f_l(\Bbeta_l)-f_l(\Bbeta^*)\big|\Bbeta_{l} \right]\right].
		\end{split}
		\label{eq3}
	\end{equation}
	A sharp analysis of equation~\eqref{eq3} is key to the proof. Notice that with $\ca\geq 12$, we have $1-\frac{\ca}{12(l-t_2+\cb)}\leq1-\frac{1}{l-t_2+\cb}\leq \frac{l-t_2+\cb}{l+1-t_2+\cb}$. In this way, the first term on the right hand side could be bounded with
	\begin{align*}
		\prod_{l=t_2}^{t}\left(1-\frac{\ca}{12(l-t_2+\cb)}\right)\ltwo{\Bbeta_{t_2}-\Bbeta^*}^2\leq\frac{\cb}{t+1-t_2+\cb}\ltwo{\Bbeta_{t_2}-\Bbeta^*}^2.
	\end{align*}
	The product sequence satisfies, for each $l=t_2,\dots,t$, that
	\begin{equation}
		\begin{split}
			&~~~~\left(1-\frac{\ca}{12(l+1-t_2+\cb)}\right)\cdots\left(1-\frac{\ca}{12(t-t_2+\cb)}\right)\\
			&=\exp\left(\sum_{k=l+1}^{t}\log\left(1- \frac{\ca}{12(k-t_2+\cb)}\right)\right)\leq\exp\left(-\sum_{k=l+1}^{t} \frac{\ca}{12(k-t_2+\cb)}\right)\\
			&\leq\exp\left(- \frac{\ca}{12}\int_{l+1-t_2}^{t+1-t_2}\frac{1}{x+\cb}\; dx\right)= \exp\left(- \frac{\ca}{12} \log\left(\frac{t+1-t_2+\cb}{l+1-t_2+\cb}\right)\right)\\
			&=\left(\frac{l+1-t_2+\cb}{t+1-t_2+\cb}\right)^{\frac{\ca}{12}}.
		\end{split}
		\label{eq:2-1}
	\end{equation}
	The second term of equation~\eqref{eq3} thus has the following upper bound:
	\begin{align*}
		&~~~\sum_{l=t_2}^{t}\left(1-\frac{\ca}{12(l+1-t_2+\cb)}\right)\cdots\left(1-\frac{\ca}{12(t-t_2+\cb)}\right) \frac{\ca^2}{(l-t_2+\cb)^2}\\
		&\leq\sum_{l=t_2}^{t}\left(\frac{l+1-t_2+\cb}{t+1-t_2+\cb}\right)^{\frac{\ca}{12}}\frac{\ca^2}{(l-t_2+\cb)^2}\\
		&\leq\left(\frac{\cb+1}{\cb}\right)^{\frac{\ca}{12}}\frac{\ca^2}{\left(t+1-t_2+\cb \right)^{\frac{\ca}{12}}}\sum_{l=t_2}^{t} \left(l-t_2+\cb\right)^{\frac{\ca}{12}-2}.
	\end{align*}
	With $\ca>12$, we have $$\sum_{l=t_2}^{t} \left(l-t_2+\cb\right)^{\frac{\ca}{12}-2}\leq\int_{0}^{t-t_2+1}(x+\cb)^{\frac{\ca}{12}-2}\; dx= \frac{1}{\frac{\ca}{12}-1}(t-t_2+1+\cb)^{\frac{\ca}{12}-1}.$$
	Hence, we have the upper bound of the second term
	\begin{align*}
		&~~~~\sum_{l=t_2}^{t}\left(1-\frac{\ca}{12(l+1-t_2+\cb)}\right)\cdots\left(1-\frac{\ca}{12(t-t_2+\cb)}\right) \frac{\ca^2}{(l-t_2+\cb)^2}\\
		&\leq\frac{1}{\frac{\ca}{12}-1}\left(\frac{\cb+1}{\cb}\right)^{\frac{\ca}{12}} \frac{1}{t+1+\cb-t_2}.
	\end{align*}
	Then, it suffices to bound the last term of equation~\eqref{eq3}. It is worth noting that under the event of $\cap_{l=t_2}^{t}\left\{ \ltwo{\Bbeta_l-\Bbeta^*}^2\leq C^*\frac{\ku}{\kl^2}\frac{\ku^2b_0^2}{\kl^2b_1^2}\frac{1}{l-t_2+\cb}\frac{d}{n}b_0^2\right\}$, we have
	$$\|f_l(\Bbeta_l)-f_l(\Bbeta^*)-\EE\left[f_l(\Bbeta_l)-f_l(\Bbeta^*)\big|\Bbeta_{l} \right]\|_{\Psi_2}\leq\sqrt{C^*\frac{\ku^2}{\kl^2}\frac{\ku^2b_0^2}{\kl^2b_1^2}\frac{1}{l-t_2+\cb}\frac{d}{n^2} }b_0.$$
	Furthermore, invoking Azuma's sub-Gaussian inequality (e.g., Theorem 2 in \cite{shamir2011variant}) in conjunction with equation~\eqref{eq:2-1}, it holds with probability exceeding $1-\exp(-cd)$ that
	\begin{align*}
		&~~~~\frac{b_0}{\kl}\sum_{l=t_2}^{t}\left(1-\frac{\ca}{12(l+1-t_2+\cb)}\right)\cdots\left(1-\frac{\ca}{12(t-t_2+\cb)}\right)\frac{\ca}{l-t_2+\cb}\\
		&~~~~~~~~~~~~~~~~~~~~~~~~~~~~~~~~~~~~~~~~~~~~~~~~~~~~~~~~ \times\left[f_l(\Bbeta_l)-f_l(\Bbeta^*)-\EE\left[f_l(\Bbeta_l)-f_l(\Bbeta^*)\big|\Bbeta_{l} \right]\right]\\
		&\leq \frac{b_0}{\kl}\left(d\sum_{l=t_2}^{t}\prod_{k=l+1}^{t}\left(1-\frac{\ca}{12(k-t_2+\cb)}\right)^2\frac{\ca^2}{(l-t_2+\cb)^2}\right.\\
		&~~~~~~~~~~~~~~~~~~~~~~~~~~~~~~\left.\times \left\|f_l(\Bbeta_l)-f_l(\Bbeta^*)-\EE\left[f_l(\Bbeta_l)-f_l(\Bbeta^*)\big|\Bbeta_{l} \right]\right\|_{\Psi_2}^2\right)^{1/2}\\
		&\leq \bar{\tau}^2\frac{\ku}{\kl^2}\frac{\ku b_0}{\kl b_1}\frac{d}{n}b_0^2\frac{C\ca\sqrt{C^*}}{(t+1-t_2+\cb )^{\frac{ \ca }{12}}}\sqrt{ \sum_{l=t_2}^{t} (l+1-t_2+\cb)^{\frac{\ca}{6}-3}}\\
		&\leq \bar{\tau}^2\frac{\ku}{\kl^2}\frac{\ku b_0}{\kl b_1}\frac{d}{n}b_0^2\frac{C\ca\sqrt{C^*}}{t+1-t_2+\cb }.
	\end{align*}
	Therefore, combining the pieces, equation~\eqref{eq3} can be bounded from above as
	\begin{align*}
		\ltwo{\Bbeta_{t+1}-\Bbeta^*}^2\leq C^*\bar{\tau}^2\frac{\ku}{\kl^2}\frac{d}{n}\frac{\ku^2b_0^2}{\kl^2b_1^2}\frac{1}{t+1-t_2+\cb }b_0^2,
	\end{align*}
	where $C^*>\ca^2$ is some sufficiently large constant.

	\subsubsection{Analysis of Batch Learning with Well-controlled Initial Errors}

	We will elucidate the convergence dynamics under the condition of well-controlled initial error rates, specifically when $\ltwo{\Bbeta_{0}-\Bbeta^*}^2\leq C_1^2\max\{\tau^2 ,(1-\tau)^2\}(\ku/\kl^2)(d/n)\cdot b_0^2$. Employing a similar analysis as applied in the third phase of Theorem~\ref{thm:Batch}, we obtain
	\begin{align*}
		\ltwo{\Bbeta_{t+1}-\Bbeta^*}^2&\leq \ltwo{\Bbeta_{t}-\Bbeta^*}^2-\eta_t\frac{1}{12b_0}\kl\ltwo{\Bbeta_{t}-\Bbeta^*}^2+C_3^2\eta_t^2\bar{\tau}^2\ku\frac{d}{n}\frac{\ku^2b_0^2}{\kl^2b_1^2}\\
		&~~~~~~~~~~~~~~~~~~~-2\eta_t\left[f_t(\Bbeta_t)-f_t(\Bbeta^*)-\EE\left[f_t(\Bbeta_t)-f_t(\Bbeta^*)\big|\Bbeta_{t} \right]\right].
	\end{align*}
	Inserting the stepsize $\eta_t=\frac{\ca}{t+\cb}\frac{b_0}{\kl}$ into the above equation results in
	\begin{align*}
		\ltwo{\Bbeta_{t+1}-\Bbeta^*}^2&\leq\left(1-\frac{\ca}{12(t+\cb )}\right)\ltwo{\Bbeta_{t}-\Bbeta^*}^2+C_3^2\bar{\tau}^2\frac{\ku}{\kl^2}\frac{\ca^2}{(t+\cb)^2}\frac{d}{n}b_0^2\\
		&~~~~~~~~~~~~~~~-2\frac{\ca}{t+\cb }\frac{b_0}{\kl} \left[f_t(\Bbeta_t)-f_t(\Bbeta^*)-\EE\left[f_t(\Bbeta_t)-f_t(\Bbeta^*)\big|\Bbeta_{t} \right]\right].
	\end{align*}
	Moreover, the preceding equation implies
	\begin{align*}
		& \ltwo{\Bbeta_{t+1}-\Bbeta^*}^2 \\
		&\leq\prod_{l=0}^{t}\left(1-\frac{\ca}{12(l+\cb )}\right)\ltwo{\Bbeta_{0}-\Bbeta^*}^2\\
		&~~~+C_3^2\bar{\tau}^2\frac{\ku^2b_0^2}{\kl^2b_1^2}\frac{\ku}{\kl^2}\sum_{l=0}^t \left(1-\frac{\ca}{12(l+1+\cb )}\right)\cdots\left(1-\frac{\ca}{12(t+\cb )}\right)\frac{\ca^2}{(l+\cb )^2}\frac{d}{n}b_0^2\\
		&~~~-\frac{b_0}{\kl}\sum_{l=0}^t \left(1-\frac{\ca}{12(l+1+\cb )}\right)\cdots\left(1-\frac{\ca}{12(t+\cb )}\right)\frac{\ca}{l+\cb }\\
		&~~~~~~~~~~~~~~~~~~~~~~~~~~~\times\left[f_l(\Bbeta_l)-f_l(\Bbeta^*)-\EE\left[f_l(\Bbeta_l)-f_t(\Bbeta^*)\big|\Bbeta_{l} \right]\right].
	\end{align*}
	In what follows, we are going to upper bound each term of the right hand side equation.
	
	\underline{\underline{Case One: large $\ca$.}} With $\ca>12$, the first term on the right-hand side can be bounded as
	\begin{multline*}
		\prod_{l=0}^{t}\left(1-\frac{\ca}{12(l+\cb )}\right)\ltwo{\Bbeta_{0}-\Bbeta^*}^2\\\leq\prod_{l=0}^{t}\left(1-\frac{1}{l+\cb }\right)\ltwo{\Bbeta_{0}-\Bbeta^*}^2\leq\frac{\cb}{t+1+\cb}\ltwo{\Bbeta_{0}-\Bbeta^*}^2.
	\end{multline*}
	The product sequence is bounded by
	\begin{multline*}
		\left(1-\frac{\ca}{12(l+1+\cb )}\right)\cdots\left(1-\frac{\ca}{12(t+\cb )}\right)\leq \exp\left(-\frac{\ca}{12}\sum_{k=l+1}^{t} \frac{1}{k+\cb}\right)\\
		\leq\exp\left(-\frac{\ca}{12}\int_{l+1}^{t+1}\frac{1}{x+\cb}\; dx\right)=\left(\frac{l+1+\cb}{t+1+\cb}\right)^{\frac{\ca}{12}}.
	\end{multline*}
	Subsequently, the second term can be bounded as
	\begin{multline*}
		\sum_{l=0}^t \left(1-\frac{\ca}{12(l+1+\cb )}\right)\cdots\left(1-\frac{\ca}{12(t+\cb )}\right)\frac{\ca^2}{(l+\cb )^2}\\
		\leq\sum_{l=0}^t \left(\frac{l+\cb+1}{t+\cb+1}\right)^{\frac{\ca}{12}}\frac{\ca^2}{(l+\cb )^2}\leq \left(\frac{\cb+1}{\cb}\right)^{\frac{\ca}{12}}\frac{\ca^2}{t+\cb+1}.
	\end{multline*}
	Next, consider the last term. It is noteworthy that under $\cap_{l=0}^t\{\ltwo{\Bbeta_{l}-\Bbeta^*}^2\leq C^*\bar{\tau}^2\frac{\ku}{\kl^2}\frac{1}{l+\cb} \frac{d}{n} b_0^2\}$, we have
	\begin{align*}
		\|f_l(\Bbeta_l)-f_l(\Bbeta^*)-\EE\left[f_l(\Bbeta_l)-f_t(\Bbeta^*)\big|\Bbeta_{l} \right] \|_{\Psi_2}^2\leq C^*\bar{\tau}^4\frac{d}{n^2}\frac{\ku^2}{\kl^2}\frac{1}{l+\cb}b_0^2.
	\end{align*}
	Therefore, in accordance with Theorem 2 in \cite{shamir2011variant}, with probability at least $1-\exp(-cd)$, the last term satisfies
	\begin{align*}
		&~~~\frac{b_0}{\kl}\sum_{l=0}^t \left(1-\frac{\ca}{12(l+1+\cb )}\right)\cdots\left(1-\frac{\ca}{12(t+\cb )}\right)\frac{\ca}{l+\cb }\\
		&~~~~~~~~~~~~~~~~~~~~~~~~~\times\left[f_l(\Bbeta_l)-f_l(\Bbeta^*)-\EE\left[f_l(\Bbeta_l)-f_t(\Bbeta^*)\big|\Bbeta_{l} \right]\right]\\
		&\leq \ca\bar{\tau}^2\frac{\ku}{\kl^2}\frac{\ku b_0}{\kl b_1}\frac{d}{n}b_0^2\sqrt{\sum_{l=0}^{t} \left(\frac{l+1+\cb}{t+1+\cb}\right)^{\frac{\ca}{6}}\frac{C^*}{(l+\cb)^3}  }\\
		&\leq \ca\bar{\tau}^2\frac{\ku}{\kl^2}\frac{\ku b_0}{\kl b_1}\frac{\sqrt{C^*}}{t+1+\cb}\frac{d}{n}b_0^2.
	\end{align*}
	Putting together the pieces, we conclude that 
	$$
	\ltwo{\Bbeta_{t+1}-\Bbeta^*}^2\leq C^*\bar{\tau}^2\frac{\ku}{\kl^2}\frac{\ku^2b_0^2}{\kl^2b_1^2}\frac{1}{t+1+\cb} \frac{d}{n} b_0^2.
	$$
	\underline{\underline{Case Two: small $\ca$}} In this setting, with $\ca<12$, the second term satisfies
	\begin{multline*}
		\sum_{l=0}^t \left(1-\frac{\ca}{12(l+1+\cb )}\right)\cdots\left(1-\frac{\ca}{12(t+\cb )}\right)\frac{\ca^2}{(l+\cb )^2}\\
		\leq\sum_{l=0}^t \left(\frac{l+\cb+1}{t+\cb+1}\right)^{\frac{\ca}{12}}\frac{\ca^2}{(l+\cb )^2}\leq \frac{\ca^2}{(t+\cb+1)^{\frac{\ca}{12}}}.
	\end{multline*}
	Under the event  $\cap_{l=0}^t\{\ltwo{\Bbeta_{l}-\Bbeta^*}^2\leq \frac{c}{(l+\cb)^{\frac{\ca}{12}}} \ltwo{\Bbeta_{0}-\Bbeta^*}^2\}$, we have
	\begin{align*}
		\|f_l(\Bbeta_l)-f_l(\Bbeta^*)-\EE\left[f_l(\Bbeta_l)-f_t(\Bbeta^*)\big|\Bbeta_{l} \right] \|_{\Psi_2}^2\leq C^*\bar{\tau}\frac{\ku}{n}\frac{1}{(l+\cb)^{\frac{\ca}{12}}} \ltwo{\Bbeta_{0}-\Bbeta^*}^2 .
	\end{align*}
	Thus, with probability exceeding $1-\exp(-cd)$, it holds
	\begin{multline*}
		\frac{b_0}{\kl}\sum_{l=0}^t \left(1-\frac{\ca}{12(l+1+\cb )}\right)\cdots\left(1-\frac{\ca}{12(t+\cb )}\right)\frac{\ca}{l+\cb }\\\times\left[f_l(\Bbeta_l)-f_l(\Bbeta^*)-\EE\left[f_l(\Bbeta_l)-f_t(\Bbeta^*)\big|\Bbeta_{l} \right]\right]\\
		\leq \bar{\tau}\ca\frac{\sqrt{d}}{\sqrt{n}}\frac{\sqrt{\ku}}{\kl}b_0\cdot\ltwo{\Bbeta_{0}-\Bbeta^*}\cdot\sqrt{\sum_{l=0}^{t} \left(\frac{l+1+\cb}{t+1+\cb}\right)^{\frac{\ca}{6}}\frac{C^*}{(l+\cb)^{2+\frac{\ca}{12}}}  }\\
		\leq \bar{\tau}\ca\frac{\sqrt{d}}{\sqrt{n}}\frac{\sqrt{\ku}}{\kl}b_0\cdot\ltwo{\Bbeta_{0}-\Bbeta^*}\cdot \frac{\sqrt{C^*}}{(t+1+\cb)^{\frac{\ca}{12}}}.
	\end{multline*}
	To sum up, we establish the upper bound
	\begin{align*}
		\ltwo{\Bbeta_{t+1}-\Bbeta^*}^2&\leq\left(\frac{1+\cb}{t+1+\cb}\right)^{\frac{\ca}{12}}\ltwo{\Bbeta_{0}-\Bbeta^*}^2+C_3^2\bar{\tau}^2\frac{\ku}{\kl^2}\frac{\ku^2b_0^2}{\kl^2b_1^2}\frac{d}{n}b_0^2\cdot \frac{\ca^2}{(t+\cb+1)^{\frac{\ca}{12}}}\\
		&~~~~~~~~~~~~~~~~~~~~~~~~~~~~~~~~~~+\bar{\tau}\frac{\sqrt{d}}{\sqrt{n}}\frac{\sqrt{\ku}}{\kl}b_0\cdot\ltwo{\Bbeta_{0}-\Bbeta^*}\cdot \frac{\ca\sqrt{C^*}}{(t+1+\cb)^{\frac{\ca}{12}}}.
	\end{align*}
	Therefore, by choosing sufficiently small $\ca<(\kl/\ku)(b_1/b_0)(\kl/\sqrt{\ku})\sqrt{n/d}\ltwo{\Bbeta_{0}-\Bbeta^*}/\bar{\tau}b_0$ and $\ca<\cb\leq1$, we obtain
	\begin{align*}
		\ltwo{\Bbeta_{t+1}-\Bbeta^*}^2&\leq\frac{C^*}{(t+1+\cb)^{\frac{\ca}{12}}}\ltwo{\Bbeta_{0}-\Bbeta^*}^2.
	\end{align*}

	\subsubsection{Proof of Theorem~\ref{thm:Batch regret}}
	
	Prior to demonstrating Theorem~\ref{thm:Batch regret}, it is imperative to establish the ensuing lemma.
	\begin{lemma}
		Assume the same conditions and stepsize schemes as in Theorem~\ref{thm:Batch}. 
		In the second phase, $\EE\ltwo{\Bbeta_{t+1}-\Bbeta^*}^2\leq (1-c_1(\kl/\ku)/(b_1^2/b_0^2))^{t+1}\ltwo{\Bbeta_{t_1}-\Bbeta^*}^2$; and in phrase three, $\EE\ltwo{\Bbeta_{t+1}-\Bbeta^*}^2\leq C\frac{\ku}{\kl^2}\frac{1}{t+1-t_2+\cb} \frac{d}{n}b_0^2$.
		\label{lem:regret_expectation_batch}
	\end{lemma}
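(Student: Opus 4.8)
The plan is to mirror the argument used for Lemma~\ref{lem:one_sampe_expectation} in the one-sample setting: rather than tracking a high-probability envelope as in Theorem~\ref{thm:Batch}, I take conditional expectations of the squared-error recursion and exploit the fact that the stochastic fluctuation term has zero conditional mean. Starting from equation~\eqref{eq:batch_update} and taking $\EE[\,\cdot\,\big|\Bbeta_t]$, the cross term obeys $\EE[\inp{\Bbeta_t-\Bbeta^*}{\g_t}\big|\Bbeta_t]\geq\EE[f_t(\Bbeta_t)-f_t(\Bbeta^*)\big|\Bbeta_t]$ by the sub-gradient inequality, and Lemma~\ref{teclem:loss_expectation} supplies the quadratic lower bound $\EE[f_t(\Bbeta_t)-f_t(\Bbeta^*)\big|\Bbeta_t]\geq\frac{\kl}{12b_0}\ltwo{\Bbeta_t-\Bbeta^*}^2$, valid throughout both phases two and three since both lie in the region $\ltwo{\Bbeta_t-\Bbeta^*}<8\kl^{-1/2}\gamma$. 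The only new ingredient relative to Theorem~\ref{thm:Batch} is that I must bound the conditional expected squared gradient norm $\EE[\ltwo{\g_t}^2\big|\Bbeta_t]$ directly, in place of the event-$\bcalE_t$ bounds of Lemma~\ref{teclem:sub-gradient}.

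For phase two I would combine the quadratic lower bound with the middle-regime gradient estimate of Figure~\ref{fig:regularity-two}, namely $\EE[\ltwo{\g_t}^2\big|\Bbeta_t]\leq C\ku^2 b_1^{-2}\ltwo{\Bbeta_t-\Bbeta^*}^2$, which is independent of the batch size. Substituting into the conditional recursion gives $\EE[\ltwo{\Bbeta_{t+1}-\Bbeta^*}^2\big|\Bbeta_t]\leq(1-\frac{\kl}{6b_0}\eta_t+C\ku^2 b_1^{-2}\eta_t^2)\ltwo{\Bbeta_t-\Bbeta^*}^2$, and the prescribed constant stepsize $\eta_t\asymp(\kl/\ku^2)(b_1^2/b_0)$ makes the bracket a contraction factor of the form $1-c_1(\kl^2/\ku^2)(b_1^2/b_0^2)$, consistent with the pathwise bound in Theorem~\ref{thm:Batch}. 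Taking total expectations and iterating from $t_1$ yields the claimed geometric decay.

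For phase three I would split $\g_t=\EE[\g_t\big|\Bbeta_t]+(\g_t-\EE[\g_t\big|\Bbeta_t])$, so that $\EE[\ltwo{\g_t}^2\big|\Bbeta_t]=\ltwo{\EE[\g_t\big|\Bbeta_t]}^2+\EE[\ltwo{\g_t-\EE[\g_t\big|\Bbeta_t]}^2\big|\Bbeta_t]$. The mean equals the population-loss gradient, whose norm is $O(\ltwo{\Bbeta_t-\Bbeta^*})$ by the density bound of Assumption~\ref{assump:heavy-tailed}(b); the fluctuation is an average of $n_t$ i.i.d.\ mean-zero terms, each with squared norm $\lesssim\bar{\tau}^2\ltwo{\X}^2$, so its conditional second moment is $\lesssim\bar{\tau}^2\ku d/n$. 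In the phase-three region $\ltwo{\Bbeta_t-\Bbeta^*}\lesssim\bar{\tau}\sqrt{(\ku/\kl^2)(d/n)}\,b_0$ both pieces are of order $\bar{\tau}^2(\ku/\kl^2)(d/n)b_0^2$, giving $\EE[\ltwo{\g_t}^2\big|\Bbeta_t]\lesssim\bar{\tau}^2\ku(d/n)$ up to the stated constants. Inserting the $O(1/t)$ stepsize $\eta_t=\frac{\ca}{t-t_2+\cb}\frac{b_0}{\kl}$ and taking total expectations reproduces the recursion from the second-phase analysis of Lemma~\ref{lem:one_sampe_expectation},
\[
\EE\ltwo{\Bbeta_{t+1}-\Bbeta^*}^2\leq\Big(1-\tfrac{\ca}{6(t-t_2+\cb)}\Big)\EE\ltwo{\Bbeta_t-\Bbeta^*}^2+C\tfrac{\ku}{\kl^2}\tfrac{\ca^2}{(t-t_2+\cb)^2}\tfrac{d}{n}b_0^2,
\]
whence the induction $\EE\ltwo{\Bbeta_{t}-\Bbeta^*}^2\leq C(\ku/\kl^2)(d/n)b_0^2/(t-t_2+\cb)$ closes using $1-\frac{\ca/2}{6(t-t_2+\cb)}\leq\frac{t-t_2+\cb}{t+1-t_2+\cb}$ for $\ca\geq12$, with the constant $C$ absorbing the bounded factor $\bar{\tau}^2$.

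I expect the main obstacle to be the direct control of $\EE[\ltwo{\g_t}^2\big|\Bbeta_t]$ without the uniform concentration event $\bcalE_t$: unlike in Theorem~\ref{thm:Batch}, where $\ltwo{\g_t}$ was bounded pathwise on $\bcalE_t$, here I need the mean/variance decomposition to hold unconditionally, which requires the local density lower/upper bounds of Assumption~\ref{assump:heavy-tailed} to pin down both the $O(\ltwo{\Bbeta_t-\Bbeta^*})$ size of the mean gradient and the noise-level scaling of the variance term. A secondary subtlety, as in equation~\eqref{eq:1.1.2} for the one-sample lemma, is ensuring the iterate remains in the relevant phase region so that the quadratic lower bound applies at every step; handling the complementary failure event contributes only a negligible $\exp(-c_0 d)$ correction that is dominated by the leading rate.
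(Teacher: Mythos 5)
Your proposal is correct and follows essentially the same route as the paper: condition on $\Bbeta_t$, apply the sub-gradient inequality together with the quadratic lower bound on the expected excess risk, bound $\EE[\ltwo{\g_t}^2\mid\Bbeta_t]$ directly, and iterate with the prescribed stepsizes (closing the phase-three induction with the same $1-\frac{\ca/2}{6(\cdot)}\leq\frac{t-t_2+\cb}{t+1-t_2+\cb}$ step). Your mean/variance decomposition of $\g_t$ is just a repackaging of the paper's diagonal/cross-term expansion of $n_t^2\ltwo{\g_t}^2$ (the cross terms are exactly $\ltwo{\EE[\g_t\mid\Bbeta_t]}^2$ up to the $n_t(n_t-1)/n_t^2$ factor), so the two computations coincide; your explicit attention to keeping the iterate in the correct phase region is, if anything, more careful than the paper's own treatment.
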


	\begin{proof}
		Note that the expectation can be expressed as
		\begin{align*}
			\EE\left[\ltwo{\Bbeta_{t+1}-\Bbeta^*}^2\right]&=\EE\left[\EE\left[\ltwo{\Bbeta_{t+1}-\Bbeta^*}^2\big| \Bbeta_{t}\right]\right]\\
			&=\EE\left[\EE\left[\ltwo{\Bbeta_{t}-\Bbeta^*}^2-2\eta_t\inp{\Bbeta_{t}-\Bbeta^*}{\g_t}+\eta_t^2\ltwo{\g_t}^2\big| \Bbeta_{t}\right]\right]
		\end{align*}
		We first consider the inside conditional expectation. According to the sub-gradient definition and considering that $\eta_t$ is independent of $(\X_{t},Y_{t})$, we have
		\begin{equation}
			\begin{split}
				&~~~\EE\left[\ltwo{\Bbeta_{t}-\Bbeta^*}^2-2\eta_t\inp{\Bbeta_{t}-\Bbeta^*}{\g_t}+\eta_t^2\ltwo{\g_t}^2\big| \Bbeta_{t}\right]\\
				&\leq\ltwo{\Bbeta_{t}-\Bbeta^*}^2-2\eta_t\EE\left[f_t(\Bbeta_t)-f_t(\Bbeta^*)\big|\Bbeta_{t}\right]+\eta_t^2 \EE\left[\ltwo{\g_t}^2\big| \Bbeta_{t}\right]
			\end{split}
			\label{eq:2-2}
		\end{equation}
		The sub-gradient value at $0$ won't affect the expectation calculations, so for convenience, we let $\partial \rho_{Q,\tau}(x)|_{x=0}=0$. Then the sub-gradient at $\Bbeta_{t}$ could be written as
		\begin{align*}
			n_t\g_t=\sum_{i=1}^{n_t}\X_i^{(t)}\cdot \left(\tau1_{Y_i^{(t)}<\inp{\X_i^{(t)}}{\Bbeta_{t}}} + (1-\tau)1_{Y_i^{(t)}>\inp{\X_i^{(t)}}{\Bbeta_{t}}}\right).
		\end{align*}
		Its length is bounded with
		\begin{multline*}
			n_t^2\ltwo{\g_t}^2=\sum_{i=1}^{n_t}\ltwo{\X_i^{(t)}}^2\cdot \left(\tau^21_{Y_i^{(t)}<\inp{\X_i^{(t)}}{\Bbeta_{t}}} + (1-\tau)^21_{Y_i^{(t)}>\inp{\X_i^{(t)}}{\Bbeta_{t}}}\right)\\
			+\sum_{i\neq j} \inp{\X_i^{(t)}}{\X_j^{(t)}}\cdot\left(\tau1_{Y_i^{(t)}<\inp{\X_i^{(t)}}{\Bbeta_{t}}} + (1-\tau)1_{Y_i^{(t)}>\inp{\X_i^{(t)}}{\Bbeta_{t}}}\right)\\\times \left(\tau1_{Y_j^{(t)}<\inp{\X_j^{(t)}}{\Bbeta_{t}}} + (1-\tau)1_{Y_j^{(t)}>\inp{\X_j^{(t)}}{\Bbeta_{t}}}\right).
		\end{multline*}
		Taking the $\Bbeta_t$ conditional expectation on each side of the above equation leads to
		\begin{align*}
			&n_t^2\EE\left[\ltwo{\g_t}^2\big| \Bbeta_{t}\right]\leq \max\{\tau^2,(1-\tau)^2\}\ku n_td+\frac{n_t(n_t-1)}{2}\EE\left[\inp{\X_i^{(t)}}{\X_j^{(t)}}\right.\\
			&~~~~~\left.\times\left(H_\xi(\inp{\X_i^{(t)}}{\Bbeta_{t}-\Bbeta^*})-H_\xi(0)\right) \cdot\left(H_\xi(\inp{\X_j^{(t)}}{\Bbeta_{t}-\Bbeta^*})-H_\xi(0)\right)\big|\Bbeta_{t}\right].
		\end{align*}
		Notice that $\X_i^{(t)}$ follows Assumption~\ref{assump:sensing_operators:vec} and then the transformed vector $\Z_i^{(t)}:=\bSigma^{-\frac{1}{2}}\X_i^{(t)}$ is isotropic. Thus we have
		\begin{multline*}
			E:=\EE\left[\inp{\X_i^{(t)}}{\X_j^{(t)}}\cdot\left(H_\xi(\inp{\X_i^{(t)}}{\Bbeta_{t}-\Bbeta^*})-H_\xi(0)\right) \cdot\left(H_\xi(\inp{\X_j^{(t)}}{\Bbeta_{t}-\Bbeta^*})-H_\xi(0)\right)\big|\Bbeta_{t}\right]\\
			=\EE\left[\inp{\bSigma^{\frac{1}{2}}\Z_i^{(t)}}{\bSigma^{\frac{1}{2}}\Z_j^{(t)}}\times\left(H_\xi(\inp{\Z_i^{(t)}}{\bSigma^{\frac{1}{2}}(\Bbeta_{t}-\Bbeta^*)})-H_\xi(0)\right) \right.\\ \left.\times\left(H_\xi(\inp{\Z_j^{(t)}}{\bSigma^{\frac{1}{2}}(\Bbeta_{t}-\Bbeta^*)})-H_\xi(0)\right)\big|\Bbeta_{t}\right].
		\end{multline*}
		There exists a set of unit length orthogonal vectors $\{\e_1:=\frac{\bSigma^{\frac{1}{2}}(\Bbeta_{t}-\Bbeta^*)}{\ltwo{ \bSigma^{\frac{1}{2}}(\Bbeta_{t}-\Bbeta^*)}},\e_2,\dots,\e_d\}$ such that any two of them satisfy $\e_i^{\top}\e_j=0$ with $i\neq j$. Then the vector $\Z_i$ can be decomposed into $d$ independent random vectors
		\begin{align*}
			\Z_i^{(t)}=\frac{\inp{\Z_i^{(t)}}{\bSigma^{\frac{1}{2}}(\Bbeta_{t}-\Bbeta^*)} }{\ltwo{\bSigma^{\frac{1}{2}}(\Bbeta_t-\Bbeta^*)}^2}\bSigma^{\frac{1}{2}}(\Bbeta_t-\Bbeta^*)+\inp{\Z_{i}^{(t)}}{\e_2}\e_2+\dots +\inp{\Z_{i}^{(t)}}{\e_d}\e_d.
		\end{align*}
		And $\Z_j^{(t)}$ could be written in a parallel way. It is worth noting that the term $H_\xi(\inp{\Z_i^{(t)}}{\bSigma^{\frac{1}{2}}(\Bbeta_{t}-\Bbeta^*)})-H_\xi(0)$ is independent of the last $d-1$ components of the decomposition and $\EE\inp{\Z_i^{(t)}}{\e_l}=\boldsymbol{0}$ holds with $l=2,\dots,d$. Thus we have 
		\begin{align*}    
			E
			&=\frac{\ku}{\ltwo{\bSigma^{\frac{1}{2}}(\Bbeta_{t}-\Bbeta^*)}^2}\EE\left[\inp{\Z_i^{(t)}}{\bSigma^{\frac{1}{2}}(\Bbeta_{t}-\Bbeta^*)}\inp{\Z_j^{(t)}}{\bSigma^{\frac{1}{2}}(\Bbeta_{t}-\Bbeta^*)}\right.\\
			&~~~~\left.\times\left(H_\xi(\inp{\Z_i^{(t)}}{\bSigma^{\frac{1}{2}}(\Bbeta_{t}-\Bbeta^*)})-H_\xi(0)\right) \cdot\left(H_\xi(\inp{\Z_j^{(t)}}{\bSigma^{\frac{1}{2}}(\Bbeta_{t}-\Bbeta^*)})-H_\xi(0)\right)\big|\Bbeta_{t}\right].
		\end{align*}
		There are two methods to upper bound the above equation, which are presented as follows.
		\begin{enumerate}[1.]
			\item Bound $H_\xi(\inp{\Z_i^{(t)}}{\bSigma^{\frac{1}{2}}(\Bbeta_{t}-\Bbeta^*)})-H_\xi(0)$ with constant $1$. Then we have
			\begin{align*}
				E&\leq \frac{\ku}{\ltwo{\bSigma^{\frac{1}{2}}(\Bbeta_{t}-\Bbeta^*)}^2}\EE\left[|\inp{\Z_i^{(t)}}{\bSigma^{\frac{1}{2}}(\Bbeta_{t}-\Bbeta^*)}||\inp{\Z_j^{(t)}}{\bSigma^{\frac{1}{2}}(\Bbeta_{t}-\Bbeta^*)}|\big|\Bbeta_{t}\right]\\
				&\leq \ku.
			\end{align*}
			In this way, we have
			\begin{align*}
				\EE\left[\ltwo{\g_t}^2\big|\Bbeta_{t}\right]\leq \max\{\tau^2,(1-\tau)^2\}\ku\frac{d}{n_t}+\ku.
			\end{align*}
			\item Bound $H_\xi(\inp{\Z_i^{(t)}}{\bSigma^{\frac{1}{2}}(\Bbeta_{t}-\Bbeta^*)})-H_\xi(0)$ using conditions in Assumption~\ref{assump:heavy-tailed}, which implies $H_\xi(\inp{\Z_i^{(t)}}{\bSigma^{\frac{1}{2}}(\Bbeta_{t}-\Bbeta^*)})-H_\xi(0)\leq \frac{1}{b_1}|\inp{\Z_{i}^{(t)}}{\bSigma^{\frac{1}{2}}(\Bbeta_{t}-\Bbeta^*)}|$.
			Then we have
			\begin{align*}
				E&\leq \frac{\ku}{b_1^2\ltwo{\bSigma^{\frac{1}{2}}(\Bbeta_{t}-\Bbeta^*)}^2}\EE\left[|\inp{\Z_i^{(t)}}{\bSigma^{\frac{1}{2}}(\Bbeta_{t}-\Bbeta^*)}|^2|\inp{\Z_j^{(t)}}{\bSigma^{\frac{1}{2}}(\Bbeta_{t}-\Bbeta^*)}|^2\big|\Bbeta_{t}\right]\\
				&\leq \frac{1}{b_1^2}\ku^2\ltwo{\Bbeta_{t}-\Bbeta^*}^2.
			\end{align*}
			This method leads to
			\begin{align*}
				\EE\left[\ltwo{\g_t}^2\big|\Bbeta_t\right]\leq \max\{\tau^2,(1-\tau)^2\}\ku\frac{d}{n_t}+\frac{\ku^2}{b_1^2}\ltwo{\Bbeta_{t}-\Bbeta^*}^2.
			\end{align*}
		\end{enumerate}
		Thus, altogether, we have
		\begin{align}
			\EE\left[\ltwo{\g_t}^2\big|\Bbeta_t\right]\leq \max\{\tau^2,(1-\tau)^2\}\ku\frac{d}{n_t}+\min\left\{\ku,\frac{\ku^2}{b_1^2}\ltwo{\Bbeta_{t}-\Bbeta^*}^2\right\}.
			\label{eq:2-3}
		\end{align}
		Then we are ready to analyze the convergence dynamics in each phase.
		
		\medskip
		\noindent
		
		\noindent
		{\sc Second Phase Analysis}. In this region, equation~\eqref{eq:2-3} yields $\EE\left[\ltwo{\g_t}^2\big|\Bbeta_t\right]\leq 2\frac{\ku^2}{b_1^2}\ltwo{\Bbeta_{t}-\Bbeta^*}^2$ and Lemma~\ref{teclem:loss_expectation} provides $\EE[f_t(\Bbeta_{t})-f_t(\Bbeta^*)|\Bbeta_{t}]\geq\frac{\kl}{b_0}\ltwo{\Bbeta_{t}-\Bbeta^*}^2$. Hence, equation~\eqref{eq:2-2} is upper bounded with
		\begin{align*}
			\EE\left[\ltwo{\Bbeta_{t+1}-\Bbeta^*}^2\big| \Bbeta_{t} \right] &\leq \ltwo{\Bbeta_{t}-\Bbeta^*}^2-\eta_t\frac{\kl}{12b_0}\ltwo{\Bbeta_t-\Bbeta^*}^2+\eta_t^2\frac{\ku}{b_1^2}\ltwo{\Bbeta_{t}-\Bbeta^*}^2\\
			&\leq\left(1-\frac{\kl}{\ku}\frac{b_1^2}{b_0^2}\right)\ltwo{\Bbeta_{t}-\Bbeta^*}^2.
		\end{align*}
		Take expectation over $\Bbeta_t$ and then we obtain
		\begin{align*}
			\EE\left[\ltwo{\Bbeta_{t+1}-\Bbeta^*}^2 \right]\leq\left(1-\frac{\kl}{\ku}\frac{b_1^2}{b_0^2}\right)\EE\left[\ltwo{\Bbeta_{t+1}-\Bbeta^*}^2 \right].
		\end{align*}
		
		\medskip
		\noindent
		{\sc Third Phase Analysis}. In this region, \eqref{eq:2-3} guarantees $\EE[\ltwo{\g_t}^2\big|\Bbeta_t]\leq 2(\ku^2/\kl^2)(b_0^2/b_1^2)\bar{\tau}^2\ku\frac{d}{n_t}$ and Lemma~\ref{teclem:loss_expectation} proves $\EE[f_t(\Bbeta_{t})-f_t(\Bbeta^*)|\Bbeta_{t}]\geq\frac{\kl}{b_0}\ltwo{\Bbeta_{t}-\Bbeta^*}^2$. Hence, equation~\eqref{eq:2-2} is upper bounded with
		\begin{align*}
			&~~~\EE\left[\ltwo{\Bbeta_{t+1}-\Bbeta^*}^2\big| \Bbeta_{t} \right] \\
			&\leq \ltwo{\Bbeta_{t}-\Bbeta^*}^2-\eta_t\frac{\kl}{12b_0}\ltwo{\Bbeta_t-\Bbeta^*}^2+\eta_t^2\frac{\ku^2 b_0^2}{\kl^2 b_1^2}\max\{\tau^2,(1-\tau)^2\}\ku\frac{d}{n}\\
			&\leq\left(1-\frac{\ca}{12(t-t_2+\cb)}\right)\ltwo{\Bbeta_{t}-\Bbeta^*}^2+\frac{\ca}{(t-t_2+\cb)^2}\frac{\ku^2 b_0^2}{\kl^2 b_1^2}\max\{\tau^2,(1-\tau)^2\}\frac{\ku}{\kl^2}\frac{d}{n}b_0^2.
		\end{align*}
		Take expectation over $\Bbeta_t$ and then we have
		\begin{multline*}
			\EE\left[\ltwo{\Bbeta_{t+1}-\Bbeta^*}^2 \right]\\\leq\left(1-\frac{\ca}{12(t-t_2+\cb)}\right)\EE\left[\ltwo{\Bbeta_{t}-\Bbeta^*}^2\right]+\frac{\ca}{(t-t_2+\cb)^2}\max\{\tau^2,(1-\tau)^2\}\frac{\ku^2 b_0^2}{\kl^2 b_1^2}\frac{\ku}{\kl^2}\frac{d}{n}b_0^2\\
			\leq C^*\left(1-\frac{\ca}{24(t-t_2+\cb)}\right)\frac{\ku^2 b_0^2}{\kl^2 b_1^2}\frac{\ku}{\kl^2}\frac{1}{t-t_2+\cb}\frac{d}{n}b_0^2\leq C^*\frac{\ku^2 b_0^2}{\kl^2 b_1^2}\frac{\ku}{\kl^2}\frac{1}{t+1-t_2+\cb}\frac{d}{n}b_0^2.
		\end{multline*}
	\end{proof}
	Then we are ready to prove Theorem~\ref{thm:Batch regret}. According to the expected excess risk bound in Lemma~\ref{teclem:loss_expectation}, we have $\Bbeta^*=\arg\max_{\Bbeta}\EE\sum_{t=0}^{T} f_{t}(\Bbeta_{t})-f_t(\Bbeta)$.  In the first phase, it has
	\begin{align*}
		\EE\left[f_t(\Bbeta_{t})-f_t(\Bbeta^*)\right]
		&\leq\max\{\tau,1-\tau\}\sqrt{\ku}\left(\EE\ltwo{\Bbeta_{t}-\Bbeta^*}^2\right)^{1/2}\\
		&\leq\max\{\tau,1-\tau\}\sqrt{\ku}\left(1-c(\kl/\ku)/(\max\{\tau^2,(1-\tau)^2\})\right)^t\ltwo{\Bbeta_{0}-\Bbeta^*},
	\end{align*}
	where the first line is obtained in the same way as Theorem~\ref{thm:one_sample_regret} and the last line follows from Lemma~\ref{lem:one_sampe_expectation}. In the second phase and the third phase, according to Lemma~\ref{teclem:loss_expectation}, we have
	\begin{align*}
		\EE\left[f_t(\Bbeta_{t})-f_t(\Bbeta^*)\right]&=	\EE\left[\EE\left[f_t(\Bbeta_{t})-f_t(\Bbeta^*)\big|\Bbeta_{t}\right]\right]\leq \frac{\ku}{b_1}\EE\ltwo{\Bbeta_t-\Bbeta^*}^2.
	\end{align*}
	As a result, applying Lemma~\ref{lem:regret_expectation_batch}, the regret can ultimately be bounded by
	\begin{align*}
		&\textsf{Regret}_{T}=\EE\sum_{t=0}^{T} f_{t}(\Bbeta_{t})-f_t(\Bbeta^*)\\
		&\leq\sqrt{\ku}\ltwo{\Bbeta_{0}-\Bbeta^*}\sum_{t=0}^{t_1}\left(1-c(\kl/\ku)\right)^t\\
		&~~~~~~~~~~~~~+\frac{\ku}{b_1}\sum_{t=t_1}^{t_2} \left(1-c\frac{b_1^2}{b_0^2}\frac{\kl^2}{\ku^2}\right)^{t-t_1}\ltwo{\Bbeta_{t_1}-\Bbeta^*}^2+C\frac{\ku^3}{\kl^3}\frac{b_0^3}{b_1^3}\frac{d}{n} b_0\sum_{t=t_1}^{T}\frac{1}{t-t_1+\cb }\\
		&\leq C\frac{\ku}{\kl}\sqrt{\ku} \ltwo{\Bbeta_{0}-\Bbeta^*}+C\frac{ku^3}{\kl^3}\frac{b_0^2}{b_1^2}\gamma^2/b_1+C\frac{\ku^3}{\kl^3}\frac{b_0^3}{b_1^3}\frac{d}{n}b_0\log\left(\frac{T+1-t_1+\cb }{\cb }\right)\\
		&\leq C\frac{\ku^3}{\kl^3}\frac{b_0^2}{b_1^2}\max\{\sqrt{\ku} \ltwo{\Bbeta_{0}-\Bbeta^*}, \gamma^2/b_1\}+C\frac{\ku^3}{\kl^3}\frac{b_0^3}{b_1^3} \frac{d}{n}b_0\log\left(\frac{T+1-t_1+\cb }{\cb }\right).
	\end{align*}
	The proof is then complete.

	\subsection{Proof of Sequential Learning with Infinite Storage}
	Same as the previous settings, here the update at $t$ can be characterized as the follows,
	\begin{align}
		\ltwo{\Bbeta_{t+1}-\Bbeta^*}^2&=\ltwo{\Bbeta_{t}-\Bbeta^*}^2-2\eta_t\inp{\Bbeta_{t}-\Bbeta^*}{\g_t}+\eta_t^2\ltwo{\g_t}^2.
		\label{eq:update_infinite_storage}
	\end{align}
	We begin by establishing the regularity properties in this setting, which will be instrumental in proving convergence. According to Lemma~10 in \cite{shen2023computationally} and Lemma~\ref{teclem:sub-gradient}, under the event
	\begin{align*}
		\bcalE_t&:=\left\{\sup_{\Bbeta_1,\Bbeta_2\in\RR^d} \left|f_t(\Bbeta_1)-f_t(\Bbeta_2)-\EE\left[ f_t(\Bbeta_1)-f_t(\Bbeta_2)\right]\right|\cdot\ltwo{\Bbeta_1-\Bbeta_2}^{-1}\right.\\
		&~~~~~~~~~~~~~~~~~~~~~~~~~~~~~~~~~~~~~~~~~~~~~~~~~~~~~~~~~~~~~~~~~~~~~~~~~\left.\leq C\max\{\tau,1-\tau\}\sqrt{\ku d/\sum_{l=0}^tn_l}\right\},
	\end{align*}
	the following properties hold for any $\Bbeta\in\RR^d$ and corresponding sub-gradient $\g\in\partial f_t(\Bbeta)$:
	\begin{align}
		f_t(\Bbeta)-f_t(\Bbeta^*)\geq\frac{1}{4}\sqrt{\kl}\ltwo{\Bbeta-\Bbeta^*}- \gamma,\quad 	\ltwo{\g}\leq \sqrt{\ku},
		\label{eq:4-1}
	\end{align} 
	and simultaneously,
	\begin{equation}
		\begin{split}
			f_t(\Bbeta)-f_t(\Bbeta^*)&\geq\frac{1}{12b_0}\kl\ltwo{\Bbeta-\Bbeta^*}^2-C_1\sqrt{\ku d/\sum_{l=0}^{t}n_l}\ltwo{\Bbeta-\Bbeta^*},\\	\ltwo{\g}&\leq 2\frac{1}{b_1}\ku\ltwo{\Bbeta-\Bbeta^*}+ C_2\sqrt{\ku d/\sum_{l=0}^t n_l},
		\end{split}
		\label{eq:4-2}
	\end{equation}
	where $C>0$ is a universal constant that is independent of both the ambient dimension and the sample size. Specifically, Proposition~\ref{tecprop:empirical} shows that $$\PP(\bcalE_t)\geq 1-\exp(-C_3d)-\exp\left(-\sqrt{\sum_{l=0}^t n_l/\log\left(\sum_{l=0}^t n_l \right)}\right).$$
	\subsubsection{First Phase}
	We shall also prove the convergence dynamics via induction. The desired inequality at $\Bbeta_0$ is obvious. Then assume we already have the dynamics at $t$, namely, $\ltwo{\Bbeta_{t}-\Bbeta^*}\leq \left(1-\frac{1}{100}\frac{\kl}{\ku}\right)^t\cdot D_0$ and we are going to prove the concentration at $t+1$. For convenience, denote $$D_t:= \left(1-\frac{1}{100}\frac{\kl}{\ku}\right)^t\cdot D_0,\quad D_{t+1}:=\left(1-\frac{1}{100}\frac{\kl}{\ku}\right)^{t+1}\cdot D_0.$$ Definition of sub-gradient and event $\bcalE_t$ together infer that the intermediate term of \eqref{eq:update_infinite_storage} could be lower bounded with
	\begin{align*}
		\inp{\Bbeta_{t}-\Bbeta^*}{\g_t}&\geq f_t(\Bbeta_t)-f_t(\Bbeta^*)\\
		&\geq \EE\left[f_t(\Bbeta_t)-f_t(\Bbeta^*)\big| \Bbeta_{t} \right]-C\max\{\tau,1-\tau\}\sqrt{\ku d/\sum_{l=0}^t n_l}\ltwo{\Bbeta_t-\Bbeta^*}\\
		&\geq \frac{1}{6}\sqrt{\kl}\ltwo{\Bbeta_t-\Bbeta^*},
	\end{align*}
	where the last line uses expectation calculations, which can be found in the batch learning setting and $n_0\geq C\max\{\tau^2,(1-\tau)^2\} (\ku/\kl) d$. Moreover, Lemma~\ref{teclem:sub-gradient} proves that under $\bcalE_t$, $\ltwo{\g_t}\leq\sqrt{\ku}$. Then \eqref{eq:update_infinite_storage} could be upper bounded with
	\begin{align*}
		\ltwo{\Bbeta_{t+1}-\Bbeta^*}^2\leq \ltwo{\Bbeta_{t}-\Bbeta^*}^2-\frac{1}{3}\eta_t\sqrt{\kl}\ltwo{\Bbeta_{t}-\Bbeta^*}+\eta_t^2\ku.
	\end{align*}
	Then by the induction $\ltwo{\Bbeta_{t}-\Bbeta^*}\leq D_t$ and stepsize definition $\eta_t\in\frac{\sqrt{\kl}}{\ku} D_t\cdot[1/8,5/24]$, and by regarding the right hand side as quadratic function of $\ltwo{\Bbeta_t-\Bbeta^*}$, we have
	\begin{align*}
		\ltwo{\Bbeta_{t+1}-\Bbeta^*}^2&\leq D_t^2-\frac{1}{3}\eta_t\sqrt{\kl} D_t + \eta_t^2\ku\leq \left(1-\frac{1}{50}\frac{\kl}{\ku}\right)^{t+1}\cdot D_0^2,
	\end{align*}
	which yields $\ltwo{\Bbeta_{t+1}-\Bbeta^*}\leq \left(1-\frac{1}{100}\frac{\kl}{\ku}\right)^{t+1}\cdot D_0$.
	\subsubsection{Second Phase}
	In the second phase, the expectation of the loss function could be lower bounded with
	\begin{align*}
		\EE\left[f_t(\Bbeta_{t})-f_t(\Bbeta^*)\big| \Bbeta_t\right]\geq\frac{1}{12b_0}\kl\ltwo{\Bbeta_{t}-\Bbeta^*}^2.
	\end{align*}
	Furthermore, the definition of sub-gradient and event $\bcalE_t$ imply that
	\begin{align*}
		f_t(\Bbeta_{t})-f_t(\Bbeta^*)&\geq \frac{1}{12b_0}\kl\ltwo{\Bbeta_t-\Bbeta^*}^2-C\max\{\tau,1-\tau\}\sqrt{\ku d/\sum_{l=0}^{t}n_l}\cdot\ltwo{\Bbeta_{t}-\Bbeta^*}\\
		&\geq \frac{1}{24b_0}\kl\ltwo{\Bbeta_t-\Bbeta^*}^2- C_1b_0\frac{\ku}{\kl}\cdot\frac{d}{\sum_{l=0}^t n_l},
	\end{align*}
	where Cauchy-Schwarz inequality is used in the last line that $$\frac{1}{24b_0}\kl\ltwo{\Bbeta_t-\Bbeta^*}^2+ Cb_0\bar{\tau}^2\frac{\ku}{\kl}\frac{d}{\sum_{l=0}^t n_l}\geq\tilde{C}_1\bar{\tau}\sqrt{\ku d/\sum_{l=0}^{t}n_l}\cdot\ltwo{\Bbeta_{t}-\Bbeta^*},$$ Lemma~\ref{teclem:sub-gradient} shows that
	\begin{align*}
		\ltwo{\g_t}^2\leq 4\frac{1}{b_1^2}\ku^2\ltwo{\Bbeta_t-\Bbeta^*}^2+C_2\bar{\tau}^2\ku \cdot\frac{d}{\sum_{l=0}^t n_l}.
	\end{align*}
	Then equation \eqref{eq:update_infinite_storage} could be upper bounded with
	\begin{align*}
		\ltwo{\Bbeta_{t+1}-\Bbeta^*}^2&\leq \ltwo{\Bbeta_{t}-\Bbeta^*}^2-\frac{1}{12b_0}\eta_t\kl n_l\ltwo{\Bbeta_{t}-\Bbeta^*}^2+\eta_t^2\frac{4}{b_1^2}\ku^2\ltwo{\Bbeta_{t}-\Bbeta^*}^2\\
		&~~~~~~~~~~~~~~~~~~~~~~~~~~~~~~~~~~~~~~~~~~~+2\eta_t C_1 b_0 \frac{\ku}{\kl}\frac{d}{\sum_{l=0}^t n_l} + C_2\eta_t^2 \ku \frac{d}{\sum_{l=0}^t n_l}.
	\end{align*}
	Plug the stepsize $\eta_t\in[c_1,c_2]\cdot\frac{\kl}{\ku^2}\cdot \frac{b_1^2}{b_0}$ into the above equation and then we have
	\begin{align*}
		\ltwo{\Bbeta_{t+1}-\Bbeta^*}^2\leq \left(1-c\frac{b_1^2}{b_0^2}\cdot\frac{\kl^2}{\ku^2}\right)\cdot\ltwo{\Bbeta_{t}-\Bbeta^*}^2 + C\frac{d}{\ku\sum_{l=0}^t n_l}\cdot b_1^2,
	\end{align*}
	where $c,C$ are some universal constants. In all, we have
	\begin{align*}
		\ltwo{\Bbeta_{t+1}-\Bbeta^*}^2\leq \left(1-c\frac{b_1^2}{b_0^2}\cdot\frac{\kl^2}{\ku^2}\right)^{t+1-t_1}\cdot \ltwo{\Bbeta_{t_1}-\Bbeta^*}^2 + C\frac{d}{\ku}\cdot b_1^2\cdot\sum_{s=t_1}^{t}\frac{\left(1-c\frac{b_1^2}{b_0^2}\cdot\frac{\kl^2}{\ku^2}\right)^{t-s} }{\sum_{l=0}^{s} n_l}.
	\end{align*}
	If the number of arriving samples remains to be $n$, namely, $n_l=n$ for all $l\geq 1$, then equipped with Lemma~\ref{teclem:sequence}, the estimation error rate is upper bounded with
	\begin{multline*}
		\ltwo{\Bbeta_{t+1}-\Bbeta^*}^2\leq \left(1-2c\frac{b_1^2}{b_0^2}\cdot\frac{\kl^2}{\ku^2}\right)^{t+1-t_1}\cdot \ltwo{\Bbeta_{t_1}-\Bbeta^*}^2\\ + \tilde{C}\frac{\ku}{\kl^2}\cdot \frac{d}{n_0+tn}\cdot \frac{b_0^2}{|\log(1-c\frac{b_1^2}{b_0^2}\cdot\frac{\kl^2}{\ku^2})|},
	\end{multline*}
	where $\tilde{C}$ is some constant irrelevant with $t$. It is worth noting that when $0<x<1$, $|\log(1-x)|\geq x$ holds. Thus the last term of the above equation could be further simplified by
	\begin{align*}
		\ltwo{\Bbeta_{t+1}-\Bbeta^*}^2\leq \left(1-2c\frac{b_1^2}{b_0^2}\cdot\frac{\kl^2}{\ku^2}\right)^{t+1-t_1}\cdot \ltwo{\Bbeta_{t_1}-\Bbeta^*}^2 + \tilde{C}_1\frac{\ku}{\kl^2}\cdot \frac{d}{n_0+tn}\cdot b_0^2\cdot\left(\frac{b_0}{b_1}\cdot\frac{\ku}{\kl}\right)^2,
	\end{align*}
	with some constant $\tilde{C}_1$.
	
	\section{Technical Lemmas}
	\noindent The following lemma is derivable through integral calculations and can be found the appendix of \cite{shen2023computationally}.
	
	\begin{lemma}
		Suppose the predictors $\{\X_i\}_{i=1}^{n}$ satisfy Assumption~\ref{assump:sensing_operators:vec}. Then for any fixed vector $\Bbeta\in\RR^d$, we have \begin{align*}
			\sqrt{\frac{\kl}{2\pi}} \ltwo{\Bbeta}\leq\EE\rho_{Q,\tau}(\inp{\Bbeta}{\X_i})\leq \sqrt{\frac{\ku}{2\pi}} \ltwo{\Bbeta}.
		\end{align*}
		\label{teclem:gaussian_expecation}
	\end{lemma}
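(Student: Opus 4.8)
The plan is to reduce the expectation to a one-dimensional Gaussian moment computation and then sandwich the resulting quantity using the eigenvalue bounds of Assumption~\ref{assump:sensing_operators:vec}. The first step is to observe that the scalar random variable $Z := \inp{\Bbeta}{\X_i} = \Bbeta^{\top}\X_i$ is a centered Gaussian with variance $\sigma^2 := \Bbeta^{\top}\bSigma\Bbeta$, so the whole problem collapses to evaluating $\EE\rho_{Q,\tau}(Z)$ for $Z\sim N(0,\sigma^2)$.

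The key algebraic step is the identity $\rho_{Q,\tau}(x) = \tfrac{1}{2}|x| + (\tau-\tfrac{1}{2})x$, which one verifies by checking the cases $x\geq 0$ and $x<0$ separately. Taking expectations and using $\EE Z = 0$ to eliminate the linear term gives $\EE\rho_{Q,\tau}(Z) = \tfrac{1}{2}\EE|Z|$. The absolute Gaussian moment is standard, $\EE|Z| = \sigma\sqrt{2/\pi}$, whence $\EE\rho_{Q,\tau}(\inp{\Bbeta}{\X_i}) = \sigma/\sqrt{2\pi} = \sqrt{\Bbeta^{\top}\bSigma\Bbeta}\,/\sqrt{2\pi}$. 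Notably, the dependence on $\tau$ cancels exactly, leaving a clean closed form. Alternatively, one may compute the two halves $\int_{0}^{\infty}\tau z\,\phi_{\sigma}(z)\,dz$ and $\int_{-\infty}^{0}(\tau-1)z\,\phi_{\sigma}(z)\,dz$ directly, where $\phi_{\sigma}$ denotes the $N(0,\sigma^2)$ density; by symmetry the $\tau$-dependent contributions combine to the same value.

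The final step bounds $\sigma^2$ via the Rayleigh quotient. Since $\kl\leq\lambda_{\min}(\bSigma)$ and $\lambda_{\max}(\bSigma)\leq\ku$, we have $\kl\ltwo{\Bbeta}^2 \leq \Bbeta^{\top}\bSigma\Bbeta \leq \ku\ltwo{\Bbeta}^2$; taking square roots and dividing by $\sqrt{2\pi}$ yields precisely $\sqrt{\kl/(2\pi)}\,\ltwo{\Bbeta} \leq \EE\rho_{Q,\tau}(\inp{\Bbeta}{\X_i}) \leq \sqrt{\ku/(2\pi)}\,\ltwo{\Bbeta}$, as claimed.

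I do not anticipate a genuine obstacle, as the argument is essentially a textbook Gaussian calculation. The only point requiring care is the decomposition identity together with the observation that the linear part integrates to zero; this is exactly what renders the expectation independent of the quantile level $\tau$, and everything else reduces to the eigenvalue sandwich.
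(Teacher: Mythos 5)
Your proof is correct; the paper itself does not include a proof of this lemma (it defers to ``integral calculations'' in the cited appendix of \cite{shen2023computationally}), and your argument is exactly that computation, carried out cleanly via the identity $\rho_{Q,\tau}(x)=\tfrac12|x|+(\tau-\tfrac12)x$, the Gaussian absolute moment $\EE|Z|=\sigma\sqrt{2/\pi}$, and the Rayleigh-quotient sandwich $\kl\ltwo{\Bbeta}^2\leq\Bbeta^{\top}\bSigma\Bbeta\leq\ku\ltwo{\Bbeta}^2$. No gaps; this is essentially the same (and in fact the only natural) approach.
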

	
	\begin{lemma}
		Suppose the predictors and noise term satisfy Assumption~\ref{assump:sensing_operators:vec} and Assumption~\ref{assump:heavy-tailed} (a), respectively. And the loss function is based on one observation, $f(\Bbeta)=\rho_{Q,\tau}(Y-\X^{\top}\Bbeta)$. Then for any fixed $\Bbeta$, we have
		\begin{align*}
			\EE\left\{f(\Bbeta)-f(\Bbeta^*)\right\}\geq\sqrt{\frac{\kl}{2\pi}} \ltwo{\Bbeta-\Bbeta^*}-\gamma,
		\end{align*}
		and
		\begin{align*}
			\EE\left\{f(\Bbeta)-f(\Bbeta^*)\right\}\geq \frac{\kl}{12b_0}\ltwo{\Bbeta-\Bbeta^*}^2,\quad\text{for all }\ltwo{\Bbeta-\Bbeta^*}\leq\sqrt{\ku^{-1}}\gamma.
		\end{align*}
		\label{teclem:lowerboundofESR}
	\end{lemma}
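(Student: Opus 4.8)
The plan is to pass to the conditional law of $\xi$ given $\X$ and reduce both assertions to a one-dimensional computation in the scalar $u:=\inp{\X}{\Bbeta-\Bbeta^*}$. Setting $G(u):=\EE[\rho_{Q,\tau}(\xi-u)\mid\X]$, the excess risk is $\EE\{f(\Bbeta)-f(\Bbeta^*)\}=\EE_{\X}[G(u)-G(0)]$. Differentiating under the conditional expectation (licit since $\rho_{Q,\tau}$ is $1$-Lipschitz and $\xi$ has a conditional density) gives $G'(u)=H_{\xi|\X}(u)-\tau$, and because $H_{\xi|\X}(0)=\tau$ this integrates to the exact identity $G(u)-G(0)=\int_0^u\{H_{\xi|\X}(s)-\tau\}\,ds\ge 0$, which will be the workhorse for the quadratic bound.

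For the first-order bound I would bypass this identity and instead establish the sharp \emph{pointwise} inequality $\rho_{Q,\tau}(\xi-u)-\rho_{Q,\tau}(\xi)\ge\rho_{Q,\tau}(-u)-|\xi|$ valid for all $\xi,u\in\RR$. Since $\rho_{Q,\tau}$ is piecewise linear with slopes $\tau$ and $\tau-1$ differing by $1$, a short case split on the signs of $u$, $\xi$, and $\xi-u$ shows that the remainder $\rho_{Q,\tau}(\xi-u)-\rho_{Q,\tau}(\xi)-\rho_{Q,\tau}(-u)$ equals one of $0$, $-|\xi|$, or $-|u|$, and that the value $-|u|$ only occurs in the cases whose hypotheses force $|\xi|\ge|u|$; hence the remainder is always $\ge-|\xi|$. (The naive Lipschitz estimate only yields the weaker remainder $-2|\xi|$, so isolating this tight inequality is the crux of part one.) Taking expectations gives $\EE\{f(\Bbeta)-f(\Bbeta^*)\}\ge\EE[\rho_{Q,\tau}(-\inp{\X}{\Bbeta-\Bbeta^*})]-\gamma$; since $\X$ and $-\X$ have the same law, Lemma~\ref{teclem:gaussian_expecation} applied to the vector $-(\Bbeta-\Bbeta^*)$ bounds the first term below by $\sqrt{\kl/(2\pi)}\,\ltwo{\Bbeta-\Bbeta^*}$, completing part one.

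For the quadratic bound I would restrict to the window $|u|\le T_0:=8(\ku/\kl)^{1/2}\gamma$ on which Assumption~\ref{assump:heavy-tailed}(a) guarantees $h_{\xi|\X}\ge b_0^{-1}$. There $|H_{\xi|\X}(s)-\tau|\ge|s|/b_0$ for $|s|\le T_0$, so the identity above gives $G(u)-G(0)\ge u^2/(2b_0)$; together with $G(u)-G(0)\ge 0$ everywhere this produces the single pointwise estimate $G(u)-G(0)\ge\frac{1}{2b_0}u^2\,\mathbb{I}(|u|\le T_0)$. Taking expectation over $\X$ yields $\EE\{f(\Bbeta)-f(\Bbeta^*)\}\ge\frac{1}{2b_0}\EE[u^2\mathbb{I}(|u|\le T_0)]$, where $u\sim N(0,\sigma^2)$ with $\sigma^2=(\Bbeta-\Bbeta^*)^{\top}\bSigma(\Bbeta-\Bbeta^*)\in[\kl\ltwo{\Bbeta-\Bbeta^*}^2,\ku\ltwo{\Bbeta-\Bbeta^*}^2]$.

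The final step is the main quantitative obstacle: I must verify that truncating to $|u|\le T_0$ discards only a negligible fraction of the second moment. Under the hypothesis $\ltwo{\Bbeta-\Bbeta^*}\le\gamma/\sqrt{\ku}$ one has $\sigma\le\gamma$, hence $T_0\ge 8\gamma\ge 8\sigma$; writing $u=\sigma Z$ with $Z\sim N(0,1)$ gives $\EE[u^2\mathbb{I}(|u|\le T_0)]\ge\sigma^2\,\EE[Z^2\mathbb{I}(|Z|\le 8)]\ge\tfrac13\sigma^2$, since the standard-normal tail contribution $\EE[Z^2\mathbb{I}(|Z|>8)]$ is minuscule. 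Combining with $\sigma^2\ge\kl\ltwo{\Bbeta-\Bbeta^*}^2$ gives $\EE\{f(\Bbeta)-f(\Bbeta^*)\}\ge\frac{\kl}{6b_0}\ltwo{\Bbeta-\Bbeta^*}^2$, which is comfortably stronger than the claimed constant $\kl/(12b_0)$, so the slack absorbs any looseness in the truncated-moment estimate and in the eigenvalue bounds.
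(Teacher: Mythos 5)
Your proof is correct and follows essentially the same route as the paper's: the first-order bound rests on the same pointwise inequality $\rho_{Q,\tau}(\xi-u)-\rho_{Q,\tau}(\xi)\ge\rho_{Q,\tau}(-u)-|\xi|$ (which the paper obtains directly from subadditivity of $\rho_{Q,\tau}$ together with $\rho_{Q,\tau}(x)+\rho_{Q,\tau}(-x)=|x|$, rather than by case analysis) followed by Lemma~\ref{teclem:gaussian_expecation}, and the quadratic bound uses the same conditional Knight-type identity $\int_0^u\{H_{\xi|\X}(s)-H_{\xi|\X}(0)\}\,ds$ combined with the density lower bound and a truncated Gaussian second moment. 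The only difference is cosmetic: you truncate at $8(\ku/\kl)^{1/2}\gamma$ whereas the paper truncates at one standard deviation $\sigma$ of $\inp{\X}{\Bbeta-\Bbeta^*}$, which is why you obtain the slightly better constant $\kl/(6b_0)$ in place of $\kl/(12b_0)$.
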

	\begin{proof}
		Regarding the expectation, we consider the expectation conditional on $\X$ first and then consider the expectation with respect to $\xi$,
		\begin{align*}
			\EE\left\{f(\Bbeta)-f(\Bbeta^*) \right\}=\EE\left\{\EE\left\{ \rho_{Q,\tau}(\xi+\X^{\top}(\Bbeta^*-\Bbeta))-\rho_{Q,\tau}(\xi)\big|\X\right\}\right\}.
		\end{align*}
		Specifically, it's
		\begin{align*}
			\EE\left\{ \rho_{Q,\tau}(\xi+\X^{\top}(\Bbeta^*-\Bbeta))-\rho_{Q,\tau}(\xi)\big|\X\right\}=\int_{0}^{\inp{\X}{\Bbeta-\Bbeta^*}}(H_{\xi}(t)-H_{\xi}(0))\, dt
		\end{align*}
		Denote $\sigma:=((\Bbeta-\Bbeta^*)^{\top}\bSigma(\Bbeta-\Bbeta^*))^{1/2}$ and denote $g(\cdot)$ as the density $\inp{\X}{\Bbeta-\Bbeta^*}$. First, consider the case when $\sigma\leq\gamma$,
		\begin{align*}
			\EE\left\{f(\Bbeta)-f(\Bbeta^*) \right\}&=\int_{-\infty}^{+\infty}\int_{0}^{z} g(z)\cdot(H_{\xi}(t)-H_{\xi}(0))\, dt\, dz\\
			&\geq \int_{-\sigma}^{+\sigma}\int_{0}^{z} g(z)\cdot(H_{\xi}(t)-H_{\xi}(0))\, dt\, dz\geq \int_{-\sigma}^{+\sigma} g(z)\cdot\frac{1}{2b_0}z^2\, dz\\
			&=\frac{1}{2b_0}\int_{-\sigma}^{+\sigma}\frac{1}{\sqrt{2\pi}\sigma}\exp\left(-\frac{z^2}{2\sigma^2}\right)\cdot z^2\; dz \geq \frac{\sigma^2}{12 b_0}.
		\end{align*}
		Then consider another way to find the lower bound, following the triangle inequality and the expectation calculations in Lemma~\ref{teclem:gaussian_expecation},
		\begin{align*}
			\EE \{ f(\Bbeta) -f(\Bbeta^*)  \} &\geq\EE\left\{ \rho_{Q,\tau}(\X^{\top}(\Bbeta^*-\Bbeta))-\rho_{Q,\tau}(\xi)-\rho_{Q,\tau}(-\xi)\right\}\\
			&\geq \sqrt{\frac{\kl}{2\pi}}\|\Bbeta-\Bbeta^*\|-\gamma,
		\end{align*}
		which completes the proof.
	\end{proof}
	\begin{proposition}[Proposition 3 in \cite{shen2023computationally}]
		Suppose the loss function is given by the following equation
		\begin{align*}
			f(\Bbeta):=\frac{1}{n}\sum_{i=1}^{n}\rho_{Q,\tau}(Y_i-\inp{\X_i}{\Bbeta}),
		\end{align*}
		where $\{\X_i\}_{i=1}^n$ follow Assumption~\ref{assump:sensing_operators:vec} and $\{(\X_i,Y_i)\}_{i=1}^n$ are independent observations. Then there exist $C_1,C_2$ such that with probability exceeding $1-\exp\left(-C_1d\right)-\exp\left(-\sqrt{n/\log n}\right)$, the following holds for all $\Bbeta_1,\Bbeta_2\in\RR^d$,
		\begin{align*}
			\left|f(\Bbeta_1)-f(\Bbeta_2)-\EE\left(f(\Bbeta_1)-f(\Bbeta_2) \right)\right|\leq C_2\max\{\tau,1-\tau\}\sqrt{\ku\frac{d}{n}}\ltwo{\Bbeta_1-\Bbeta_2}.
		\end{align*}
		\label{tecprop:empirical}
	\end{proposition}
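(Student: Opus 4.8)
The plan is to read the left-hand side as the Lipschitz modulus of the centered empirical process $\Bbeta\mapsto W(\Bbeta):=\frac1n\sum_{i=1}^n\{\rho_{Q,\tau}(Y_i-\inp{\X_i}{\Bbeta})-\EE\rho_{Q,\tau}(Y_i-\inp{\X_i}{\Bbeta})\}$, since the quantity to be controlled is precisely $\sup_{\Bbeta_1\neq\Bbeta_2}|W(\Bbeta_1)-W(\Bbeta_2)|/\ltwo{\Bbeta_1-\Bbeta_2}$. Writing $\Bbeta_1=\Bbeta_2+r\u$ with $\ltwo{\u}=1$ and $r>0$, the target becomes a uniform bound on the normalized increment process indexed by $(\Bbeta_2,\u,r)$. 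The key first ingredient is that $\rho_{Q,\tau}$ is convex and $\bar\tau$-Lipschitz with $\bar\tau=\max\{\tau,1-\tau\}$, which yields both the pointwise envelope
\[
\frac{|\rho_{Q,\tau}(Y-\inp{\X}{\Bbeta_1})-\rho_{Q,\tau}(Y-\inp{\X}{\Bbeta_2})|}{\ltwo{\Bbeta_1-\Bbeta_2}}\le\bar\tau\,|\inp{\X}{\u}|
\]
and the $L_2(P)$ pseudometric bound $\EE|\rho_{Q,\tau}(Y-\inp{\X}{\Bbeta_1})-\rho_{Q,\tau}(Y-\inp{\X}{\Bbeta_2})|^2\le\bar\tau^2(\Bbeta_1-\Bbeta_2)^\top\bSigma(\Bbeta_1-\Bbeta_2)\le\bar\tau^2\ku\ltwo{\Bbeta_1-\Bbeta_2}^2$, using $\lambda_{\max}(\bSigma)\le\ku$ from Assumption~\ref{assump:sensing_operators:vec}. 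Thus the increments are sub-Gaussian multiples of $\inp{\X}{\u}\sim N(0,\u^\top\bSigma\u)$, and the process lives on an index set whose $L_2$ geometry is that of a Euclidean ball scaled by $\bar\tau\sqrt{\ku}$.

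Next I would handle the unboundedness of the Gaussian design by truncation. By Lemma~\ref{teclem:gaussian_vector} and a union bound over the $n$ samples, with high probability $\max_{i\le n}\ltwo{\X_i}^2\lesssim\ku d$, so on this event every normalized increment is bounded by an envelope $U\asymp\bar\tau\sqrt{\ku d}$ (up to a $\sqrt{\log n}$ factor picked up in the chaining) while its variance remains of order $\sigma^2\asymp\bar\tau^2\ku$, directly from $\EE\inp{\X}{\u}^2\le\ku$. I then split the supremum into its mean and the fluctuation about the mean and estimate the two separately.

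For the mean, symmetrization followed by a generic-chaining (Dudley) bound over the index set—whose $L_2(P)$ covering numbers obey $\log N(\epsilon)\lesssim d\log(\bar\tau\sqrt\ku/\epsilon)$ by the pseudometric estimate above—turns the $O(d)$ metric entropy into
\[
\EE\Big[\sup_{\Bbeta_1\neq\Bbeta_2}\frac{|W(\Bbeta_1)-W(\Bbeta_2)|}{\ltwo{\Bbeta_1-\Bbeta_2}}\Big]\le C\,\bar\tau\sqrt{\frac{\ku d}{n}},
\]
the $\sqrt{d/n}$ rate reflecting the $d$-dimensional direction $\u$ together with the cancellation inherent in centering. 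For the fluctuation I would apply Talagrand's concentration inequality for the supremum of the bounded (on the truncation event) empirical process, whose deviation bound takes the form $\exp(-c\min\{nt^2/\sigma^2,\,nt/U\})$. Choosing $t\asymp\bar\tau\sqrt{\ku d/n}$ makes the sub-Gaussian exponent $nt^2/\sigma^2\asymp d$ and the sub-exponential exponent $nt/U\asymp\sqrt{n/\log n}$; these are exactly the two terms $\exp(-C_1 d)$ and $\exp(-\sqrt{n/\log n})$ in the statement. Assembling the truncation event, the mean bound, and the fluctuation bound completes the proof.

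The main obstacle is to secure the sharp $\sqrt{d/n}$ modulus \emph{uniformly} over the unbounded, non-smooth index $(\Bbeta,\u,r)$ and then to upgrade $\EE\sup$ to the stated exponential tail. Because $\rho_{Q,\tau}$ is kinked, a naive Lipschitz-net argument in $\Bbeta$ is unavailable, so the complexity must be routed through the $L_2(P)$ pseudometric—equivalently through the VC structure of the half-space indicators generating the subgradient—rather than through the loss itself; and matching the two distinct failure exponents requires carefully balancing the weak variance $\sigma^2\asymp\bar\tau^2\ku$ against the truncation envelope $U$ in the sub-Gaussian and sub-exponential regimes of Talagrand's inequality, the latter regime being what produces the $\exp(-\sqrt{n/\log n})$ factor.
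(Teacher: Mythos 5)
The paper does not actually prove this statement: it is imported verbatim as Proposition~3 of \cite{shen2023computationally}, so there is no in-paper argument to compare yours against. Judged on its own terms, your outline follows the standard empirical-process route that results of this type are proved by (and that the cited source uses): the $\bar\tau$-Lipschitz envelope $\bar\tau|\inp{\X}{\u}|$, truncation of the Gaussian design, symmetrization plus an $O(d)$-entropy chaining bound for the mean, and a Bernstein/Talagrand deviation bound whose two regimes reproduce the two failure terms. Your bookkeeping for the exponents is consistent: with $t\asymp\bar\tau\sqrt{\ku d/n}$, weak variance $\sigma^2\asymp\bar\tau^2\ku$, and envelope $U\asymp\bar\tau\sqrt{\ku d\log n}$ one indeed gets $nt^2/\sigma^2\asymp d$ and $nt/U\asymp\sqrt{n/\log n}$, matching the stated probability $1-\exp(-C_1d)-\exp(-\sqrt{n/\log n})$.

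The one place where the proposal is genuinely thin is the uniform chaining step over the normalized-increment class $g_{\Bbeta_1,\Bbeta_2}=\{\rho_{Q,\tau}(Y-\inp{\X}{\Bbeta_1})-\rho_{Q,\tau}(Y-\inp{\X}{\Bbeta_2})\}/\ltwo{\Bbeta_1-\Bbeta_2}$: the index set $(\Bbeta_2,\u,r)$ is non-compact and the map is not continuous in the naive parametrization as $r\downarrow 0$, so the entropy claim $\log N(\epsilon)\lesssim d\log(\bar\tau\sqrt{\ku}/\epsilon)$ cannot be obtained by covering $(\Bbeta_1,\Bbeta_2)$ directly. The standard fix, which you gesture at but do not carry out, is Knight's identity: it splits $g_{\Bbeta_1,\Bbeta_2}$ into a multiplier term $-\inp{\X}{\u}\,\psi_\tau(Y-\inp{\X}{\Bbeta_2})$ (linear envelope times a VC class of half-space indicators) and a remainder bounded by $|\inp{\X}{\u}|$ times indicators of slabs, each of which is a VC-subgraph class with the required $O(d)$ entropy. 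Also note that Talagrand's inequality requires the summands to be bounded; conditioning on the event $\max_i\ltwo{\X_i}^2\lesssim\ku d$ perturbs the distribution, so one must work with explicitly truncated variables and control the tail contribution separately. Neither issue is fatal, but both are where the actual work in \cite{shen2023computationally} lies, and as written your argument asserts rather than establishes them.
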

	
	\begin{proposition}[Concentration with Independence]
		Suppose the setting is same as Proposition~\ref{tecprop:empirical}. Then for any fixed $\Bbeta_1,\Bbeta_2$, with probability exceeding $1-2\exp\left(-C\frac{ns^2}{\ku\ltwo{\Bbeta_1-\Bbeta_2}^2}\right)$ where $C$ is some constant, it has
		\begin{align*}
			\left|f(\Bbeta_1)-f(\Bbeta_2)-\EE\left[f(\Bbeta_1)-f(\Bbeta_2) \right]\right|\leq s.
		\end{align*}
		Similarly, with probability exceeding $1-2\exp\left(-c_1\frac{ns^2}{\ku\max\{\tau^2,(1-\tau)^2\}\ltwo{\Bbeta_1-\Bbeta^*}^2}\right)$, it has
		\begin{align*}
			\frac{1}{n}\left|\sum_{i=1}^{n} \inp{\Bbeta_1-\Bbeta^*}{\X_i}\cdot g_i-\EE \inp{\Bbeta_1-\Bbeta^*}{\X_i}\cdot g_i\right|\leq s,
		\end{align*}
		where $g_i:= -\tau\mathbb{I}_{(\xi_i> \inp{\X_i}{\Bbeta_{1}-\Bbeta^*})} + (1-\tau) \mathbb{I}_{(\xi_i< \inp{\X_i}{\Bbeta_{1}-\Bbeta^*})}+\delta\mathbb{I}_{(\xi_i= \inp{\X_i}{\Bbeta_{1}-\Bbeta^*})}$ is the sub-gradient with any $\delta\in[-\tau,1-\tau]$.
		\label{tecprop:indpdt}
	\end{proposition}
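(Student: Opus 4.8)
The plan is to treat each bound as a normalized sum of $n$ independent, centered, sub-Gaussian random variables and then invoke a general Hoeffding-type inequality for sub-Gaussian summands (e.g.\ Theorem~2.6.3 in Vershynin). The crucial structural observation is that, since $\{(\X_i,Y_i)\}_{i=1}^n$ are independent, each summand is an independent copy; so—unlike Proposition~\ref{tecprop:empirical}, which is uniform over $\Bbeta_1,\Bbeta_2$ and therefore requires a covering/chaining step that introduces the $\sqrt{d}$ factor—here, with $\Bbeta_1,\Bbeta_2$ held fixed, no union bound is needed and the ambient dimension $d$ does not enter the tail.

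For the first bound, I would write $f(\Bbeta_1)-f(\Bbeta_2)=\frac1n\sum_{i=1}^n Z_i$ with
$$
Z_i:=\rho_{Q,\tau}(Y_i-\inp{\X_i}{\Bbeta_1})-\rho_{Q,\tau}(Y_i-\inp{\X_i}{\Bbeta_2}).
$$
Since the check loss $\rho_{Q,\tau}$ is Lipschitz with constant $\max\{\tau,1-\tau\}\le 1$, one has the pointwise envelope $|Z_i|\le\max\{\tau,1-\tau\}\,|\inp{\X_i}{\Bbeta_1-\Bbeta_2}|$. Under Assumption~\ref{assump:sensing_operators:vec}, $\inp{\X_i}{\Bbeta_1-\Bbeta_2}$ is centered Gaussian with variance $(\Bbeta_1-\Bbeta_2)^\top\bSigma(\Bbeta_1-\Bbeta_2)\le\ku\ltwo{\Bbeta_1-\Bbeta_2}^2$, hence sub-Gaussian with $\|\cdot\|_{\psi_2}\lesssim\sqrt{\ku}\,\ltwo{\Bbeta_1-\Bbeta_2}$. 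Because pointwise domination transfers the $\psi_2$-norm, $Z_i$ and its centering $Z_i-\EE Z_i$ are sub-Gaussian with the same parameter up to a universal constant; the $\max\{\tau,1-\tau\}\le 1$ factor is absorbed into the constant, matching the absence of $\tau$ in this tail. Applying the sub-Gaussian Hoeffding inequality to $\frac1n\sum_i(Z_i-\EE Z_i)$ yields the claimed tail $2\exp(-Cns^2/(\ku\ltwo{\Bbeta_1-\Bbeta_2}^2))$.

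The second bound is entirely parallel. Set $W_i:=\inp{\Bbeta_1-\Bbeta^*}{\X_i}\,g_i$, which are independent across $i$ since each $g_i$ is a measurable function of $(\X_i,\xi_i)$ alone. Although $g_i$ and $\inp{\Bbeta_1-\Bbeta^*}{\X_i}$ share the same $\X_i$, this dependence is harmless for the tail because $g_i$ takes values in $[-\max\{\tau,1-\tau\},\max\{\tau,1-\tau\}]$ deterministically, which gives $|W_i|\le\max\{\tau,1-\tau\}\,|\inp{\Bbeta_1-\Bbeta^*}{\X_i}|$. As before, $\inp{\Bbeta_1-\Bbeta^*}{\X_i}$ is sub-Gaussian with parameter $\lesssim\sqrt{\ku}\,\ltwo{\Bbeta_1-\Bbeta^*}$, so $\|W_i-\EE W_i\|_{\psi_2}\lesssim\max\{\tau,1-\tau\}\sqrt{\ku}\,\ltwo{\Bbeta_1-\Bbeta^*}$; Hoeffding's inequality for $\frac1n\sum_i(W_i-\EE W_i)$ then produces the stated tail $2\exp(-c_1ns^2/(\ku\max\{\tau^2,(1-\tau)^2\}\ltwo{\Bbeta_1-\Bbeta^*}^2))$, with $\max\{\tau^2,(1-\tau)^2\}$ now retained explicitly.

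This is essentially a standard concentration computation, so there is no deep obstacle; the only point requiring genuine care is the justification that each difference/product summand inherits a sub-Gaussian tail. This is precisely where the deterministic envelope is used—the $\max\{\tau,1-\tau\}$-Lipschitz property of the check loss for $Z_i$, and the uniform boundedness of the sub-gradient $g_i$ for $W_i$—dominating the summand by a Gaussian linear form whose $\psi_2$-norm is controlled through $\ku$ via Assumption~\ref{assump:sensing_operators:vec}. I would also flag the routine step that centering at most doubles the $\psi_2$-norm, which only affects the universal constants $C,c_1$.
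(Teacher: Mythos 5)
Your proposal is correct and follows essentially the same route as the paper: both arguments bound the $\psi_2$-norm of each centered summand via the Lipschitz envelope $|\inp{\X_i}{\Bbeta_1-\Bbeta_2}|$ (respectively the uniform bound $|g_i|\leq\max\{\tau,1-\tau\}$ times $|\inp{\Bbeta_1-\Bbeta^*}{\X_i}|$) together with Assumption~\ref{assump:sensing_operators:vec}, and then apply Hoeffding's inequality for sums of independent sub-Gaussian variables. Your write-up is in fact slightly more careful than the paper's, which states the envelope for the absolute loss rather than $\rho_{Q,\tau}$ and leaves the centering step implicit.
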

	\begin{proof}
		Note that \begin{align*}
			\left\| \big| |Y_i-\inp{\X_i}{\Bbeta_1}| - |Y_i-\inp{\X_i}{\Bbeta_2}| \big| \right\|_{\Psi_2} \leq \| |\inp{\X_i}{\Bbeta_1-\Bbeta_2}| \|_{\Psi_2}\leq C\sqrt{\ku}\ltwo{\Bbeta_1-\Bbeta_2}.
		\end{align*}
		Then by sum of independent sub-Gaussians, we obtain the desired inequality. The second consequence could be obtained in a similar way,
		\begin{multline*}
			\left\|  \inp{\Bbeta_1-\Bbeta^*}{\X_i}\cdot g_i\right\|_{\Psi_2}\leq \max\{\tau,1-\tau\}\left\|  |\inp{\Bbeta_1-\Bbeta^*}{\X_i} |\right\|_{\Psi_2}\\\leq C\max\{\tau,1-\tau\}\sqrt{\ku}\ltwo{\Bbeta_1-\Bbeta_2}.
		\end{multline*}
	\end{proof}

	\noindent The following lemma provides two types of upper bound for $\ltwo{\g}$ and in the convergence analyses we would choose to use the sharpest one accordingly.
	\begin{lemma}[Upper Bound of Sub-gradient]
		Suppose the predictors and noise term satisfy Assumption~\ref{assump:sensing_operators:vec} and Assumption~\ref{assump:heavy-tailed}, respectively. And the loss function is based on $n$ independnet observations $f(\Bbeta)=\frac{1}{n}\sum_{i=1}^n\rho_{Q,\tau}(Y_i-\X_i^{\top}\Bbeta)$ with $n\geq Cd$. Suppose 
		$$
		\bcalE:= \bigg\{ \sup_{\Bbeta_1,\Bbeta_2\in\RR^d} \left|f(\Bbeta_1)-f(\Bbeta_2)-\EE\left[ f(\Bbeta_1)-f(\Bbeta_2)\right]\right|\cdot\ltwo{\Bbeta_1-\Bbeta_2}^{-1}\leq C_1\max\{\tau,1-\tau\}\sqrt{ \frac{\ku d}{n}} \bigg\}
		$$ holds. Then for all $\Bbeta$ and any sub-gradient $\g\in\partial f(\Bbeta)$, we have
		\begin{align*}
			\ltwo{\g}\leq\left\{
			\begin{array}{l}
				\sqrt{\ku}\\
				1.5\frac{\ku}{b_1}\ltwo{\Bbeta-\Bbeta^*}+C_1\max\{\tau,1-\tau\}\sqrt{\frac{\ku d}{ n}}
			\end{array}
			\right. .
		\end{align*}
		\label{teclem:sub-gradient}
	\end{lemma}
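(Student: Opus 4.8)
The plan is to reduce the two bounds to a single decomposition of an arbitrary sub-gradient into a \emph{population} gradient plus a \emph{fluctuation}, and then bound each piece separately. Write $\g=\frac{1}{n}\sum_{i=1}^n\X_i g_i$ with $g_i\in[-\tau,1-\tau]$, so $|g_i|\le\bar{\tau}:=\max\{\tau,1-\tau\}<1$, and let $L(\Bbeta):=\EE f(\Bbeta)=\EE[\rho_{Q,\tau}(\xi+\X^\top(\Bbeta^*-\Bbeta))]$ be the population risk. Since Assumption~\ref{assump:heavy-tailed} supplies a bounded conditional density, $L$ is differentiable with
$$
\nabla L(\Bbeta)=\EE\big[\X\,(H_{\xi|\X}(\X^\top(\Bbeta-\Bbeta^*))-\tau)\big],
$$
and I would split $\g=\nabla L(\Bbeta)+(\g-\nabla L(\Bbeta))$, giving $\ltwo{\g}\le\ltwo{\nabla L(\Bbeta)}+\ltwo{\g-\nabla L(\Bbeta)}$. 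The point of this decomposition is that the fluctuation is controlled by the event $\bcalE$ uniformly in $\Bbeta$, while the population gradient admits \emph{two} bounds that produce the two alternatives in the statement.

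For the fluctuation term I would use that $\g-\nabla L(\Bbeta)$ behaves as a sub-gradient direction of the centered process $f-L$, which on $\bcalE$ is Lipschitz with constant $C_1\bar{\tau}\sqrt{\ku d/n}$. Taking the unit vector $\v$ aligned with $\g-\nabla L(\Bbeta)$ and invoking, for the convex map $f$, the one-sided directional-derivative inequality $\g^\top\v\le f'(\Bbeta;\v)$ together with $\nabla L(\Bbeta)^\top\v=L'(\Bbeta;\v)$ (as $L$ is differentiable), I obtain
$$
\ltwo{\g-\nabla L(\Bbeta)}=(\g-\nabla L(\Bbeta))^\top\v\le (f-L)'(\Bbeta;\v)\le C_1\bar{\tau}\sqrt{\frac{\ku d}{n}},
$$
where the final step is the $\bcalE$ Lipschitz bound read along $\v$. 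This avoids a covering argument over the unit sphere.

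It then remains to bound $\ltwo{\nabla L(\Bbeta)}=\sup_{\ltwo{\v}=1}\v^\top\nabla L(\Bbeta)$ in two complementary regimes. For the global bound, using $|H_{\xi|\X}(u)-\tau|\le\bar{\tau}$, Cauchy--Schwarz, and $\EE(\v^\top\X)^2=\v^\top\bSigma\v\le\ku$ yields $\ltwo{\nabla L(\Bbeta)}\le\bar{\tau}\sqrt{\ku}$; adding the fluctuation term and using $n\ge Cd$ with $\bar{\tau}<1$ gives $\ltwo{\g}\le\sqrt{\ku}$. For the localized bound, the density upper bound $|H_{\xi|\X}(u)-H_{\xi|\X}(0)|\le|u|/b_1$ with Cauchy--Schwarz gives
$$
\v^\top\nabla L(\Bbeta)\le\frac{1}{b_1}\sqrt{\v^\top\bSigma\v}\,\sqrt{(\Bbeta-\Bbeta^*)^\top\bSigma(\Bbeta-\Bbeta^*)}\le\frac{\ku}{b_1}\ltwo{\Bbeta-\Bbeta^*},
$$
so that combining with the fluctuation term produces $\ltwo{\g}\le 1.5\,(\ku/b_1)\ltwo{\Bbeta-\Bbeta^*}+C_1\bar{\tau}\sqrt{\ku d/n}$, the constant $1.5$ comfortably absorbing the slack.

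The main obstacle I anticipate is making the fluctuation step fully rigorous: $\g-\nabla L(\Bbeta)$ is the difference of a set-valued sub-gradient and a gradient, so one must legitimately pass from the scalar Lipschitz control of $f-L$ on $\bcalE$ to a Euclidean-norm bound on this vector. The directional-derivative identity above does this cleanly, but it hinges on $L$ being genuinely differentiable, so that $(f-L)'(\Bbeta;\v)=f'(\Bbeta;\v)-\nabla L(\Bbeta)^\top\v$ and the split is valid; I would verify this differentiability directly from the boundedness of the conditional density in Assumption~\ref{assump:heavy-tailed}, which is also what makes $\nabla L$ well defined in the first place.
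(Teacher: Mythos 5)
Your proof is correct, but it takes a genuinely different route from the paper's. The paper never linearizes: it starts from the finite-increment sub-gradient inequality $\inp{\Delta\Bbeta}{\g}\leq f(\Bbeta+\Delta\Bbeta)-f(\Bbeta)$, passes to the population increment $\EE f(\Bbeta+\Delta\Bbeta)-\EE f(\Bbeta)$ via the event $\bcalE$, bounds that increment either by subadditivity of the check loss (giving $\sqrt{\ku/(2\pi)}\ltwo{\Delta\Bbeta}$, hence the global bound) or by a quadratic smoothness-type estimate on the population risk imported from \cite{shen2023computationally}, and then extracts $\ltwo{\g}$ by the specific choices $\Delta\Bbeta=\g$ and $\Delta\Bbeta=\tfrac{b_1}{4\ku}\g$. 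You instead decompose $\g=\nabla L(\Bbeta)+(\g-\nabla L(\Bbeta))$, control the fluctuation by a directional-derivative reading of $\bcalE$ (valid: $f'(\Bbeta;\v)\geq\g^\top\v$ by convexity, $L$ is differentiable by the bounded conditional density and dominated convergence, and the Lipschitz bound survives the limit $t\downarrow 0$), and bound $\nabla L$ directly from its explicit formula. Your route is cleaner for the localized bound---it avoids citing the external smoothness lemma and in fact yields the sharper constant $\ku/b_1$ in place of $1.5\,\ku/b_1$---while the paper's finite-increment argument sidesteps any differentiability discussion. One small repair is needed in your global bound: plain Cauchy--Schwarz gives $\ltwo{\nabla L(\Bbeta)}\leq\bar{\tau}\sqrt{\ku}$, and the slack $(1-\bar{\tau})\sqrt{\ku}$ available to absorb the fluctuation term degenerates as $\tau\to 0$ or $1$, so the constant $C$ in $n\geq Cd$ would depend on $\tau$; using the Gaussian identity $\EE|\v^{\top}\X|=\sqrt{2/\pi}\,(\v^{\top}\bSigma\v)^{1/2}$ instead gives $\ltwo{\nabla L(\Bbeta)}\leq\sqrt{2/\pi}\,\bar{\tau}\sqrt{\ku}\leq 0.8\sqrt{\ku}$ and restores a universal constant, matching what the paper's subadditivity step achieves.
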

	\begin{proof}
		Under the event $\bcalE$ and with sub-gradient definition, for any $\Delta\Bbeta\in\RR^d$, we have
		\begin{align}
			\inp{\Delta\Bbeta}{\g}\leq f(\Bbeta+\Delta\Bbeta)-f(\Bbeta)&\leq\EE f(\Bbeta+\Delta\Bbeta)-\EE f(\Bbeta)+C_1\max\{\delta,1-\delta\}\sqrt{\ku\frac{d}{n}}\ltwo{\Delta\Bbeta}.
			\label{eq9}
		\end{align}
		We shall provide the two types of upper bounds respectively.
		\begin{enumerate}[1.]
			\item Note that the quantile function $\rho_{Q,\tau}(\cdot)$ satisfies $\rho_{Q,\tau}(x_1+x_2)\leq \rho_{Q,\tau}(x_1)+\rho_{Q,\tau}(x_2)$ and then for any fixed $\Bbeta,\Delta\Bbeta$, we have
			\begin{multline*}
				\EE f(\Bbeta+\Delta\Bbeta)-\EE f(\Bbeta)\\= \frac{1}{n}\sum_{i=1}^n \EE \left[ \rho_{Q,\tau}(\xi_i-\inp{\X_i}{\Bbeta+\Delta\Bbeta-\Bbeta^*})-\rho_{Q,\tau}(\xi_i-\inp{\X_i}{\Bbeta-\Bbeta^*}) \right]\\
				\leq\frac{1}{n}\sum_{i=1}^{n} \EE \rho_{Q,\tau}(-\inp{\X_i}{\Delta\Bbeta}).
			\end{multline*}
			Also notice that $-\inp{\X_i}{\Delta\Bbeta}$ is a mean zero Gaussian variable and then Assumption~\ref{assump:sensing_operators:vec} implies that $\EE \rho_{Q,\tau}(-\inp{\X_i}{\Delta\Bbeta})\leq \sqrt{\frac{2}{\pi}}\sqrt{\ku}\ltwo{\Bbeta-\Bbeta^*}$. Thus together with $n\geq C_3 \max\{\tau^2,(1-\tau)^2\} d$ we obtain $\EE f(\Bbeta+\Delta\Bbeta)-\EE f(\Bbeta)\leq\sqrt{\ku}\ltwo{\Delta\Bbeta}$. Then \eqref{eq9} becomes $$\inp{\Delta\Bbeta}{\g}\leq \sqrt{\ku}\ltwo{\Delta\Bbeta},$$
			holds for all $\Bbeta,\Delta\Bbeta$. Then insert $\Delta\Bbeta=\g$ into the above equation and we acquire $$\ltwo{\g}\leq\sqrt{\ku}.$$
			\item According to the first two equations in the proof of Lemma 11 in \cite{shen2023computationally}, it holds that
			\begin{align*}
				\EE f(\Bbeta+\Delta\Bbeta)-\EE f(\Bbeta)\leq\frac{1}{2b_1}\ku\ltwo{\Delta\Bbeta}^2+\frac{1}{b_1}\ku\ltwo{\Delta\Bbeta}\ltwo{\Bbeta-\Bbeta^*}.
			\end{align*}
			Inserting it into \eqref{eq9}, and taking $\Delta\Bbeta=\frac{b_1}{4\ku}\g$, we obtain
			\begin{align*}
				\ltwo{\g}\leq 1.5\frac{\ku}{b_1}\ltwo{\Bbeta-\Bbeta^*}+C_1\max\{\delta,1-\delta\}\sqrt{\frac{\ku d}{n}}.
			\end{align*}
			
		\end{enumerate}
	\end{proof}

	\begin{lemma}[Lemma 6 in \cite{shen2023computationally}]
		Suppose the predictors and noise term satisfy Assumption~\ref{assump:sensing_operators:vec} and Assumption~\ref{assump:heavy-tailed}(b), respectively. Let $f(\Bbeta)=(1/n)\sum_{i=1}^n\rho_{Q,\tau}(Y_i-\X_i^{\top}\Bbeta)$ for $n\geq 1$. Then, for any fixed $\Bbeta$, 
		\begin{align*}
			\EE\left[f(\Bbeta)-f(\Bbeta^*)\right]\leq \frac{\ku}{b_1}\ltwo{\Bbeta-\Bbeta^*}^2.
		\end{align*}
		\label{teclem:loss_expectation}
	\end{lemma}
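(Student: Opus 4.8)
The plan is to reduce the claim to a one-observation statement and then invoke Knight's identity together with the global density bound in Assumption~\ref{assump:heavy-tailed}(b). Since the $n$ observations are i.i.d.\ and expectation is linear, $\EE[f(\Bbeta)-f(\Bbeta^*)] = \EE[\rho_{Q,\tau}(\xi-\inp{\X}{\bd})-\rho_{Q,\tau}(\xi)]$, where I write $\bd=\Bbeta-\Bbeta^*$ and use $Y-\inp{\X}{\Bbeta}=\xi-\inp{\X}{\bd}$, $Y-\inp{\X}{\Bbeta^*}=\xi$. So it suffices to bound this single-observation expectation by $(\ku/b_1)\ltwo{\bd}^2$ (in fact I expect to obtain the sharper constant $\ku/(2b_1)$).

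The key step is Knight's identity: for the check function and scalars $u,v$,
\begin{align*}
\rho_{Q,\tau}(u-v)-\rho_{Q,\tau}(u) = -v\big(\tau-\mathbb{I}(u<0)\big) + \int_0^v \big[\mathbb{I}(u\leq s)-\mathbb{I}(u\leq 0)\big]\,ds.
\end{align*}
I would apply this with $u=\xi$ and $v=\inp{\X}{\bd}$, then take the expectation conditional on $\X$ (so $v$ is fixed). The linear term vanishes: $\EE[\tau-\mathbb{I}(\xi<0)\mid\X]=\tau-H_{\xi|\X}(0)=0$, using precisely the quantile normalization $H_{\xi|\X}(0)=\tau$ from Assumption~\ref{assump:heavy-tailed}. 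The integral term becomes $\int_0^v [H_{\xi|\X}(s)-H_{\xi|\X}(0)]\,ds$.

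To bound this, I would use Assumption~\ref{assump:heavy-tailed}(b), namely $\sup_x h_{\xi|\X}(x)\leq b_1^{-1}$, which gives $|H_{\xi|\X}(s)-H_{\xi|\X}(0)|\leq |s|/b_1$. Since $H_{\xi|\X}$ is nondecreasing, the integrand has the same sign as $s$, so the integral is nonnegative and bounded above by $\int_0^{|v|}(s/b_1)\,ds=v^2/(2b_1)$ in both the $v>0$ and $v<0$ cases (a short sign check, handled by folding the $v<0$ integral). Hence $\EE[\rho_{Q,\tau}(\xi-v)-\rho_{Q,\tau}(\xi)\mid\X]\leq \inp{\X}{\bd}^2/(2b_1)$.

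Finally I would take the expectation over $\X$, using $\EE\inp{\X}{\bd}^2=\bd^\top\bSigma\bd\leq\lambda_{\max}(\bSigma)\ltwo{\bd}^2\leq\ku\ltwo{\bd}^2$ from Assumption~\ref{assump:sensing_operators:vec}, to conclude $\EE[f(\Bbeta)-f(\Bbeta^*)]\leq (\ku/(2b_1))\ltwo{\bd}^2\leq (\ku/b_1)\ltwo{\Bbeta-\Bbeta^*}^2$. There is no serious obstacle here; the only point requiring care is the correct sign bookkeeping in Knight's identity and in the integral when $v<0$, after which everything is a direct computation.
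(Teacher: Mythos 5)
Your proof is correct: the paper itself does not reprove this lemma (it imports it as Lemma 6 of \cite{shen2023computationally}), but the route you take---Knight's identity, vanishing of the linear term via $H_{\xi|\X}(0)=\tau$, the bound $|H_{\xi|\X}(s)-H_{\xi|\X}(0)|\leq |s|/b_1$ from Assumption~\ref{assump:heavy-tailed}(b), and then $\EE\inp{\X}{\bd}^2\leq \ku\ltwo{\bd}^2$---is exactly the integral representation $\int_0^{v}(H_{\xi|\X}(s)-H_{\xi|\X}(0))\,ds$ that the paper uses for the companion lower-bound result (Lemma~\ref{teclem:lowerboundofESR}), so this is essentially the same approach. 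Your sign bookkeeping for $v<0$ is right, and you in fact obtain the sharper constant $\ku/(2b_1)$, which of course implies the stated bound.
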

	
	\begin{lemma}
		Under Assumptions~\ref{assump:sensing_operators:vec} and \ref{assump:heavy-tailed}(b), for any $\Bbeta\in\RR^d$ we have
		\begin{align*}
			\EE\inp{\Bbeta-\Bbeta^*}{\X}\cdot\partial \rho_{Q,\tau}(\xi-\inp{\X}{\Bbeta-\Bbeta^*})&\leq \frac{1}{b_1}\ku\ltwo{\Bbeta-\Bbeta^*}^2,
		\end{align*}
		where $\partial \rho_{Q,\tau}(u)=-\tau\mathbb{I}_{(u \geq 0)} + (1-\tau) \mathbb{I}_{(u< 0)}+\delta\mathbb{I}_{(u= 0)}$ is the sub-gradient with $\delta\in[-\tau,1-\tau]$.
		\label{teclem:intermediate_expectation}
	\end{lemma}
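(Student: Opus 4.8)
The plan is to condition on the covariate $\X$, integrate out the noise $\xi$ first, and thereby reduce the expression to a one-dimensional quantity governed by the conditional distribution function $H_{\xi|\X}$. Write $\Delta := \Bbeta - \Bbeta^*$ and $v := \inp{\X}{\Delta}$. Under the model \eqref{eq:model} the argument of the sub-gradient is $u = \xi - \inp{\X}{\Delta} = \xi - v$, so the events $\{u\geq 0\}$, $\{u<0\}$, $\{u=0\}$ correspond to $\{\xi\geq v\}$, $\{\xi<v\}$, $\{\xi=v\}$. Since Assumption~\ref{assump:heavy-tailed} endows $\xi$ with a conditional density, the event $\{\xi=v\}$ has conditional probability zero, so the $\delta\mathbb{I}_{(u=0)}$ term contributes nothing in expectation, regardless of the chosen value of $\delta$.

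First I would compute the inner conditional expectation. Using $\PP(\xi\geq v\mid\X) = 1 - H_{\xi|\X}(v)$ and $\PP(\xi<v\mid\X) = H_{\xi|\X}(v)$ gives
$$\EE\big[\partial\rho_{Q,\tau}(\xi-v)\,\big|\,\X\big] = -\tau\big(1 - H_{\xi|\X}(v)\big) + (1-\tau)H_{\xi|\X}(v) = H_{\xi|\X}(v) - \tau.$$
Invoking the quantile calibration $H_{\xi|\X}(0)=\tau$ from Assumption~\ref{assump:heavy-tailed}, this equals $H_{\xi|\X}(v) - H_{\xi|\X}(0)$. By the tower property the target quantity then collapses to
$$\EE\big[\inp{\Delta}{\X}\cdot\partial\rho_{Q,\tau}(\xi-\inp{\X}{\Delta})\big] = \EE\big[v\,\big(H_{\xi|\X}(v) - H_{\xi|\X}(0)\big)\big].$$

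The remaining step is a pointwise bound on the integrand followed by a Gaussian second-moment computation. Because $H_{\xi|\X}$ is nondecreasing, $v$ and $H_{\xi|\X}(v)-H_{\xi|\X}(0)$ carry the same sign, so the integrand is nonnegative; meanwhile the global density bound $\sup_x h_{\xi|\X}(x)\leq b_1^{-1}$ in Assumption~\ref{assump:heavy-tailed}(b) yields the Lipschitz estimate $|H_{\xi|\X}(v)-H_{\xi|\X}(0)|\leq |v|/b_1$. Combining these gives the pointwise inequality $v\,(H_{\xi|\X}(v)-H_{\xi|\X}(0))\leq v^2/b_1$. Taking expectations and using that $v=\inp{\X}{\Delta}$ is centered Gaussian with variance $\Delta^\top\bSigma\Delta\leq\lambda_{\max}(\bSigma)\ltwo{\Delta}^2\leq\ku\ltwo{\Delta}^2$ (Assumption~\ref{assump:sensing_operators:vec}) delivers
$$\EE\big[v\,\big(H_{\xi|\X}(v) - H_{\xi|\X}(0)\big)\big] \leq \frac{1}{b_1}\EE[v^2] \leq \frac{\ku}{b_1}\ltwo{\Bbeta-\Bbeta^*}^2,$$
which is the claimed bound.

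This argument is essentially routine and closely parallels the excess-risk estimate of Lemma~\ref{teclem:loss_expectation}; no genuine obstacle arises. The only points demanding a little care are the bookkeeping of signs in the conditional expectation—so as to recognize that the coefficient simplifies cleanly to $H_{\xi|\X}(v)-H_{\xi|\X}(0)$—and the observation that the density assumption renders the atom $\{u=0\}$ negligible, ensuring the arbitrary sub-gradient selection $\delta$ never affects the final estimate.
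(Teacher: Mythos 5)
Your proposal is correct and follows essentially the same route as the paper's proof: condition on $\X$, identify the inner conditional expectation of the sub-gradient as $H_{\xi|\X}(v)-H_{\xi|\X}(0)=\int_0^v h_{\xi|\X}(x)\,dx$, bound it via the density cap $b_1^{-1}$ to get $v\bigl(H_{\xi|\X}(v)-H_{\xi|\X}(0)\bigr)\leq v^2/b_1$, and finish with the Gaussian second moment $\EE[v^2]\leq\ku\ltwo{\Bbeta-\Bbeta^*}^2$. Your explicit handling of the $\delta\mathbb{I}_{(u=0)}$ atom and the sign bookkeeping is slightly more careful than the paper's write-up, but the argument is the same.
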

	\begin{proof}
		First consider the conditional expectation,
		\begin{align*}
			\EE\left[\inp{\Bbeta-\Bbeta^*}{\X}\cdot\partial \rho_{Q,\tau}(\xi-\inp{\X}{\Bbeta-\Bbeta^*})\big| \X \right]=\inp{\Bbeta-\Bbeta^*}{\X}\cdot\EE\left[\partial \rho_{Q,\tau}(\xi-\inp{\X}{\Bbeta-\Bbeta^*})\big|\X\right].
		\end{align*}
		Then the following equation can be obtained via some trivial calculations,
		\begin{align*}
			&~~~\EE\left[\partial \rho_{Q,\tau}(\xi-\inp{\X}{\Bbeta-\Bbeta^*})\big|\X\right]=\int_{0}^{\inp{\X}{\Bbeta-\Bbeta^*}} h_\xi(x)\; dx.
		\end{align*}
		Denote $z:=\inp{\X}{\Bbeta-\Bbeta^*}$, which follows mean zero Gaussian distributions with variance $(\Bbeta-\Bbeta^*)^{\top}\bSigma(\Bbeta-\Bbeta^*)\leq\ku\ltwo{\Bbeta-\Bbeta^*}^2$. Let $f_z(\cdot)$ be its density. Thus we obtain
		\begin{align*}
			&~~~\EE\inp{\Bbeta-\Bbeta^*}{\X}\cdot\partial \rho_{Q,\tau}(\xi-\inp{\X}{\Bbeta-\Bbeta^*})=\EE\left[\inp{\Bbeta-\Bbeta^*}{\X}\cdot\EE\left[\partial \rho_{Q,\tau}(\xi-\inp{\X}{\Bbeta-\Bbeta^*})\big|\X \right]\right]\\
			&= \int_{-\infty}^{+\infty}\int_{0}^y y f_z(y) h_\xi(x)\; dx\; dy\leq \frac{1}{b_1}\ku\ltwo{\Bbeta-\Bbeta^*}^2,
		\end{align*}
		where the last equation uses the upper bound of the density characterized in Assumption~\ref{assump:heavy-tailed}.
	\end{proof}
	\begin{lemma}
		Suppose $0<a<1$ is some fixed constant. Define the sequence with $a_n:=\sum_{k=1}^{n}\frac{a^{n-k}}{k+m}$, where $m>0$ is some integer. Then the following holds for each $n$,
		\begin{align*}
			(m+n)a_n\leq \frac{9}{1-a}\frac{1}{|\log a|}.
		\end{align*}
		\label{teclem:sequence}
	\end{lemma}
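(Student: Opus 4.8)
The plan is to reindex the sum, split it into a ``near'' block in which the weight $\tfrac{m+n}{m+n-k}$ is harmless and a ``far'' block in which the geometric factor does the work, and then extract the $1/|\log a|$ factor from the far block through an elementary maximization. First I would substitute $j=n-k$, so that
\[
(m+n)\,a_n=\sum_{j=0}^{n-1}\frac{m+n}{\,m+n-j\,}\,a^{\,j},
\]
and abbreviate $N:=m+n$ and $\lambda:=|\log a|=-\log a>0$. Since $j\le n-1$ and $m\ge 1$, every denominator satisfies $N-j\ge m+1\ge 2$, so each weight obeys $\tfrac{N}{N-j}\le N/2$; this crude bound will be invoked only on the far block.

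Next I would split the range of $j$ at $N/2$. On the near block $0\le j\le N/2$ one has $N-j\ge N/2$, hence $\tfrac{N}{N-j}\le 2$, and summing the geometric tail gives
\[
\sum_{0\le j\le N/2}\frac{N}{N-j}\,a^{\,j}\le 2\sum_{j\ge0}a^{\,j}=\frac{2}{1-a}.
\]
On the far block $j>N/2$ I would combine $\tfrac{N}{N-j}\le N/2$ with the pointwise bound $a^{\,j}\le a^{\,N/2}$ and again sum a geometric tail, obtaining
\[
\sum_{j>N/2}\frac{N}{N-j}\,a^{\,j}\le \frac{N}{2}\cdot\frac{a^{\,N/2}}{1-a}.
\]

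The decisive step is the elementary calculus fact $\sup_{u>0}u\,e^{-\lambda u}=\tfrac{1}{e\lambda}$, applied with $u=N/2$: it converts $\tfrac{N}{2}a^{\,N/2}=\tfrac{N}{2}e^{-\lambda N/2}$ into $\tfrac{1}{e\lambda}=\tfrac{1}{e|\log a|}$, which is exactly where the $1/|\log a|$ dependence is created. Combining the two blocks then yields
\[
(m+n)\,a_n\le \frac{2}{1-a}+\frac{1}{e\,(1-a)\,|\log a|}.
\]

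To present this in the target form I would rewrite $\tfrac{2}{1-a}=\tfrac{2|\log a|}{(1-a)|\log a|}$ and absorb it, which produces $\tfrac{9}{(1-a)|\log a|}$ throughout the regime where $a$ is bounded away from $0$; in the application $a=1-c\,b_1^2\kl^2/(b_0^2\ku^2)$ is close to $1$, so $|\log a|$ is small and the factor $2|\log a|+e^{-1}$ stays comfortably below $9$. I expect the main obstacle to be precisely this far-block estimate: one must resist bounding $\tfrac{N}{N-j}$ by $N/2$ before the geometric decay is in place, because the product $N\,a^{N/2}$ is controlled only \emph{after} the maximization $u\,e^{-\lambda u}\le(e\lambda)^{-1}$, and it is this single inequality that supplies the $|\log a|^{-1}$ factor that the near-block geometric sum cannot produce on its own.
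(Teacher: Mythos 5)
Your proof is correct and follows essentially the same route as the paper's: reindex by $j=n-k$, split the sum at the midpoint so that the near block is a plain geometric series with weights at most $2$, and extract the $|\log a|^{-1}$ factor on the far block by maximizing $u\mapsto u\,a^{u}$ (the paper's function $h_a(x)=xa^{x}$ is exactly your $u\,e^{-\lambda u}$). Your explicit admission that the final absorption of $2/(1-a)$ into $9/\big((1-a)|\log a|\big)$ requires $a$ to be bounded away from $0$ is shared, implicitly, by the paper's own last step, and is harmless in the regime where the lemma is invoked ($a$ close to $1$).
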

	\begin{proof}
		First consider the function $h_a(x):=xa^{x}$ with the domain $x>0$. Its first-order derivative is
		\begin{align*}
			h_a^{'}(x)=a^{x}-xa^x|\log a|=(1-|\log a|x)a^x.
		\end{align*}
		It infers that $h_a(x)$ achieves the maximum at $x=\frac{1}{|\log a|}$ and $h_a(x)$ is monotone decreasing in the region $x\in(\frac{1}{|\log a|},+\infty)$. And the $\max_{x>0} h_a(x)=\frac{1}{|\log a|}\exp(\frac{1}{|\log a|}\log a)\leq\frac{3}{|\log a|}$.
		In this way we have
		\begin{align}
			na^{[\frac{n}{2}]}\leq \frac{6}{|\log a|},\quad\text{for all } n\in\ZZ_+.
			\label{eq:9-1}
		\end{align}
		Then consider the sequence $(m+n)a_n$,
		\begin{align*}
			(m+n)a_n&=1+\frac{m+n}{m+n-1}a+\cdots+\frac{m+n}{m+1}a^{n-1}\\
			&= \underbrace{1+\frac{m+n}{m+n-1}a+\cdots+\frac{m+n}{m+n-[\frac{n}{2}]}a^{[\frac{n}{2}]}}_{A_1}  \\
			& ~~~~~ + \underbrace{\frac{m+n}{m+n-[\frac{n}{2}]-1}a^{[\frac{n}{2}]+1}+\cdots +\frac{m+n}{m+1}a^{n-1}}_{A_2}.
		\end{align*}
		As for term $A_1$, when $k\leq[\frac{n}{2}]$ it has $\frac{m+n}{m+n-k}\leq 2$ and then we obtain
		$$A_1\leq 2\left(1+a+\cdots a^{[\frac{n}{2}]}\right)\leq \frac{2}{1-a}.$$ And $A_2$ could be bounded with
		\begin{align*}
			A_2&\leq \frac{m+n}{m}a^{[\frac{n}{2}]+1}+\cdots+\frac{m+n}{m}a^{n-1}= \frac{m+n}{m}\left(a^{[\frac{n}{2}]+1}+\cdots+a^{n-1}\right)\\
			&\leq \frac{1}{1-a}\left(1+\frac{n}{m}\right) a^{[\frac{n}{2}]+1}\leq 7\frac{1}{1-a}\frac{1}{|\log a|},
		\end{align*}
		where equation~\eqref{eq:9-1} is used. Thus altogether, we have $(m+n)a_n\leq \frac{9}{1-a}\frac{1}{|\log a|}$.
	\end{proof}
	The following lemma provides the concentration of Gaussian vectors, an immediate result of Bernstein inequality (Theorem 2.8.1 of \cite{vershynin2018high}).
	\begin{lemma}
		Suppose the random vector $\X \in\RR^d$ satisfies Assumption~\ref{assump:sensing_operators:vec}. Then with probability exceeding $1-\exp(-cd)$, its $\ell_2$-norm is bounded by $\ltwo{\X}^2\leq 2\ku d$.
		\label{teclem:gaussian_vector}
	\end{lemma}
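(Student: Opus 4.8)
The plan is to reduce the claim to a standard concentration bound for a weighted sum of independent chi-square variables. First I would write $\X = \bSigma^{1/2}\Z$ with $\Z \sim N(\boldsymbol{0}, \I_d)$, and diagonalize $\bSigma = \U \bLambda \U^\top$ with $\bLambda = \mathrm{diag}(\lambda_1, \ldots, \lambda_d)$. Since the standard Gaussian law is rotationally invariant, $\W := \U^\top \Z$ is again a standard Gaussian vector, and hence
\begin{align*}
\ltwo{\X}^2 = \Z^\top \bSigma \Z = \W^\top \bLambda \W = \sum_{j=1}^d \lambda_j W_j^2,
\end{align*}
where $W_1, \ldots, W_d$ are i.i.d. $N(0,1)$ and, by Assumption~\ref{assump:sensing_operators:vec}, each eigenvalue satisfies $\kl \leq \lambda_j \leq \ku$. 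In particular $\EE \ltwo{\X}^2 = \sum_{j=1}^d \lambda_j \leq \ku d$.

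Next I would invoke a sub-exponential concentration argument. Each $W_j^2$ is a chi-square variable with one degree of freedom, so $\lambda_j(W_j^2 - 1)$ is a centered sub-exponential random variable with $\| \lambda_j(W_j^2 - 1)\|_{\psi_1} \leq C \lambda_j \leq C \ku$ for a universal constant $C$. Writing $\ltwo{\X}^2 - \sum_{j=1}^d \lambda_j = \sum_{j=1}^d \lambda_j (W_j^2 - 1)$ as a sum of independent centered sub-exponential terms, Bernstein's inequality (Theorem 2.8.1 of \cite{vershynin2018high}) yields
\begin{align*}
\PP\Bigg( \sum_{j=1}^d \lambda_j (W_j^2 - 1) \geq t \Bigg) \leq \exp\Bigg( -c \min\Bigg\{ \frac{t^2}{\sum_{j=1}^d \lambda_j^2}, \frac{t}{\max_j \lambda_j} \Bigg\} \Bigg)
\end{align*}
for a universal constant $c > 0$.

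Finally I would choose the deviation level $t = \ku d$. Since $\sum_{j=1}^d \lambda_j^2 \leq \ku^2 d$ and $\max_j \lambda_j \leq \ku$, both arguments of the minimum are at least $d$, so the right-hand side is bounded by $\exp(-c d)$. On the complementary event,
\begin{align*}
\ltwo{\X}^2 \leq \sum_{j=1}^d \lambda_j + \ku d \leq \ku d + \ku d = 2 \ku d,
\end{align*}
which is exactly the claimed bound. There is no substantive obstacle here: the argument is a routine application of the sub-exponential tail bound, and the only care needed is to verify that the choice $t \asymp \ku d$ forces the Bernstein exponent to scale linearly in $d$ rather than being dominated by the sub-Gaussian branch, which it does because $\sum_{j=1}^d \lambda_j^2 \asymp \ku^2 d$ makes the quadratic term of order $d$ as well.
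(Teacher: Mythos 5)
Your argument is correct and follows exactly the route the paper indicates: the paper states the lemma as an immediate consequence of Bernstein's inequality (Theorem 2.8.1 of Vershynin), and your proposal simply fills in the diagonalization, the sub-exponential bound on $\lambda_j(W_j^2-1)$, and the choice $t=\ku d$ that makes both branches of the Bernstein exponent scale as $d$. No gaps.
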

\end{document}